\newtheoremstyle{slthm}
  {9pt}
  {5pt}
  {\slshape}
  {}
  {\bfseries}
  {.}
  {.5em}
  {\thmname{#1} \thmnumber{#2}{\rm \thmnote{ (#3)}}}
\newtheoremstyle{Slthm}
  {9pt}
  {5pt}
  {\slshape}
  {}
  {\bfseries}
  {.}
  {0.5em}
  {\thmname{#1}{\rm \thmnote{ (#3)}}}
\newtheoremstyle{prcl}
  {9pt}
  {5pt}
  {\slshape}
  {}
  {\bfseries}
  {.}
  {.5em}
  {\thmname{#3} \thmnumber{ #2}}
\newtheoremstyle{slthmprime}
  {9pt}
  {5pt}
  {\slshape}
  {}
  {\bfseries}
  {.}
  {.5em}
  {\thmname{#1} \thmnumber{#2}${}^\prime${\rm \thmnote{ (#3)}}}
\theoremstyle{slthm}
\newtheorem{theorem}{Theorem}[section]
\newtheorem{lemma}[theorem]{Lemma}
\newtheorem{proposition}[theorem]{Proposition}
\newtheorem{corollary}[theorem]{Corollary}
\newtheorem{assertion}[theorem]{Assertion}
\theoremstyle{Slthm}
\theoremstyle{definition}
\newtheorem{definition}[theorem]{Definition}
\newtheorem{definitions}[theorem]{Definitions}
\theoremstyle{remark}
\newtheorem{notation}[theorem]{Notation}
\newtheorem{remark}[theorem]{Remark}
\newtheorem{remarks}[theorem]{Remarks}
\newenvironment{caselist}{
	\begin{list}{Case \arabic{enumi}:~}{\usecounter{enumi} \setlength{\itemsep}{3pt}
                \setlength{\leftmargin}{30pt} \setlength{\labelwidth}{10pt}}
}{
	\end{list}
}
\numberwithin{equation}{section}
\newcommand{\A}{\mathcal{A}}
\newcommand{\C}{\mathcal{C}}
\newcommand{\F}{\mathcal{F}}
\newcommand{\G}{\mathcal{G}}
\newcommand{\I}{\mathcal{I}}
\newcommand{\J}{\mathcal{J}}
\renewcommand{\O}{\mathcal{O}}   %
\renewcommand{\P}{\mathcal{P}} %
\renewcommand{\S}{\mathcal{S}}   %
\newcommand{\U}{\mathcal{U}}
\newcommand{\NN}{\mathbb{N}}
\newcommand{\ZZ}{\mathbb{Z}}
\newcommand{\QQ}{\mathbb{Q}}
\newcommand{\RR}{\mathbb{R}}
\newcommand{\VV}{\mathbb{V}}
\renewcommand{\bar}[1]{\overline{#1}} %
\newcommand{\tld}[1]{\widetilde{#1}}
\DeclareMathOperator{\Span}{span}
\newcommand{\Restr}[1]{\big|_{#1}}
\newcommand{\PD}[3]{\frac{\partial^{#1}#2}{\partial {#3}^{#1}}}
\renewcommand{\implies}{\rightarrow}
\DeclareMathOperator{\supp}{supp}
\DeclareMathOperator{\LC}{LC}
\DeclareMathOperator{\Int}{Int}
\DeclareMathOperator{\loc}{loc}
\DeclareMathOperator{\CR}{cr}
\DeclareMathOperator{\NC}{nc}
\author{Raf Cluckers}
\address{Universit\'e Lille 1, Laboratoire Painlev\'e, CNRS - UMR 8524, Cit\'e Scientifique, 59655
Villeneuve d'Ascq Cedex, France, and,
KU Leuven, Department of Mathematics,
Celestijnenlaan 200B, B-3001 Leu\-ven, Bel\-gium}
\email{Raf.Cluckers@math.univ-lille1.fr}
\urladdr{http://math.univ-lille1.fr/$\sim$cluckers}
\author[Miller]{Daniel~J.~Miller}
\address{Emporia State University, Department of Mathematics, Computer Science and Economics, 1200 Commercial Street, Campus Box 4027, Emporia, KS 66801, U.S.A.}
\email{dmille10@emporia.edu}
\title{Lebesgue classes and preparation of real constructible functions}
\subjclass[2000]{Primary 46E30, 32B20, 14P15; Secondary 42B35, 03C64}
\begin{document}

\begin{abstract}
We call a function \emph{constructible} if it has a globally subanalytic domain and can be expressed as a sum of products of globally subanalytic functions and logarithms of positively-valued globally subanalytic functions.  For any $q > 0$ and constructible functions $f$ and $\mu$ on $E\times\RR^n$, we prove a theorem describing the structure of the set
\[
\{(x,p)\in E\times(0,\infty] : f(x,\cdot) \in L^p(|\mu|_{x}^{q})\},
\]
where $|\mu|_{x}^{q}$ is the positive measure on $\RR^n$ whose Radon-Nikodym derivative with respect to the Lebesgue measure is $|\mu(x,\cdot)|^q : y\mapsto |\mu(x,y)|^q$.  We also prove a closely related preparation theorem for $f$ and $\mu$.  These results relate analysis (the study of $L^p$-spaces) with geometry (the study of zero loci). \vspace*{-5pt}\\

\noindent{\sc Keywords.} $L^p$-spaces, integrability locus, preparation theorem, subanalytic functions, constructible functions
\end{abstract}

\maketitle

\section*{Introduction}\label{s:intro}

The Lebesgue spaces, $L^p(\mu)$ for $p\in(0,\infty]$, are ubiquitous in many areas of mathematical analysis and its applications.  Much of the research about the Lebesgue spaces has been conducted in a very general measure-theoretic framework, with the focus being on discovering a host of relationships between the various $L^p$ spaces.  A number of the classical theorems are inequalities that explain how various function operations behave with respect to the Lebesgue spaces.  For example, for addition there is Minkowski's inequality; for multiplication there is H\"{o}lder's inequality; for convolutions there is Young's convolution inequality; for Fourier transforms of periodic functions there is the Hausdorff-Young inequality.  Other classical theorems explain the structure of linear maps between the various $L^p$ spaces, such as the duality of Lebesgue spaces with conjugate exponents and the Riesz-Thorin interpolation theorem.

This paper explores theorems about the Lebesgue spaces of a rather different sort.  We use geometric techniques to study the structure of the Lebesgue classes of parameterized families of functions, along with a related preparation theorem.  The starting point of our investigation is the observation that, although much of the utility of the Lebesgue spaces --- and more generally, of the theory of integration as a whole --- stems from the generality of the measure-theoretic framework in which it has been developed, it is many times applied to study integrals of very special functions that arise naturally in real analytic geometry.  And, if we focus our attention on studying the $L^p$ properties of these very special functions, we should be able to obtain rather strong theorems that cannot be proven, or even reasonably formulated, in a very general measure-theoretic framework.  This is because by focusing on special functions, we can supplement the very general tools from mathematical analysis with much more specialized tools from real analytic geometry and o-minimal structures. Similar approaches have been followed in the context of $p$-adic and motivic integration; see e.g.~\cite{CLoes}.

The o-minimal framework is still a bit too general for our purposes, and we choose to focus on the constructible functions, by which we mean the real-valued functions that have globally subanalytic domains and that can be expressed as sums of products of globally subanalytic functions and logarithms of positively-valued globally subanalytic functions.  The study of constructible functions largely originated in the work of Lion and Rolin, \cite{LR98}, where these functions naturally arose in their study of integration of globally subanalytic functions. (In the context of $p$-adic integration, analogues of constructible functions arose from the work by J.~Denef \cite{Denef}.)
The integration theory of globally subanalytic and constructible functions was then further developed by Comte, Lion and Rolin in \cite{CLR2000} and also by the authors in \cite{CluckersMiller1} and \cite{CluckersMiller2}.  Much of the utility of the constructible functions stems from the fact that they are stable under integration --- from which it follows that they are the smallest class of functions that is stable under integration and contains the subanalytic functions --- and that they have very simple asymptotic behavior (see Theorem 1.3 and Proposition 1.5 in \cite{CluckersMiller1}). In fact, these results have typically lagged behind the motivic and $p$-adic developments. In this paper, the real situation takes the lead over the $p$-adic and motivic results.

We obtain two main theorems about the constructible functions; see Theorems \ref{thm:LC} and \ref{thm:constrPrepSimple}. The first theorem considers a constant $q > 0$ and constructible functions $f$ and $\mu$ on $E\times\RR^n$, and it describes the structure of the set
\begin{equation}\label{eq:LCintro}
\LC(f,|\mu|^q,E) := \{(x,p)\in E\times(0,\infty] : f(x,\cdot) \in L^p(|\mu|_{x}^{q})\},
\end{equation}
where $|\mu|_{x}^{q}$ is the positive measure on $\RR^n$ whose Radon-Nikodym derivative with respect to the Lebesgue measure is $|\mu(x,\cdot)|^q : y\mapsto |\mu(x,y)|^q$.  The theorem and its corollaries show that the set of all fibers of $\LC(f,|\mu|^q,E)$ over $E$ is a finite set of open subintervals of $(0,\infty]$, and that the set of all fibers of $\LC(f,|\mu|^q,E)$ over $(0,\infty]$ is a finite set of subsets of $E$, each of which is the zero locus of a constructible function on $E$.  This theorem therefore relates analysis with geometry, in the sense that Lebesgue classes are an object of study in analysis, while zero loci of functions are widely studied in analytic geometry. A similar link between geometry and analysis (but with $\mu=1$  and with focus on $L^1$-integrability) is obtained in $p$-adic and motivic contexts in \cite{CGH}.

The second theorem is a closely related preparation result that expresses $f$ and $\mu$ as finite sums of terms of a very simple form that naturally reflect the structure of $\LC(f,|\mu|^q,E)$.  This theorem can be most easily appreciated through the  historical context in which it was developed, starting with the following simple preparation result for constructible functions, which is a rather direct consequence of Lion and Rolin's preparation theorem for globally subanalytic functions:
\begin{equation}\label{eq:formalConstrPrep}
\left\{\text{\parbox{5.8in}{
Let $f:E\times\RR^n\to\RR$ be constructible, with $E\subset\RR^m$, and write $(x,y) = (x_1,\ldots,x_m,y_1,\ldots,y_n)$ for the standard coordinates on $E\times\RR^n$.  Then $f$ can be piecewise written on subanalytic sets as finite sums $\textstyle \sum_{k\in K} T_k(x,y)$, where up to performing translations in $y$ by globally subanalytic functions of a triangular form, each term is of the form $\textstyle T_k(x,y) = g_k(x)\left(\prod_{j=1}^{n}|y_{j}|^{r_{k,j}} (\log|y_j|)^{s_{k,j}}\right) u_k(x,y)$ for some constructible function $g_k$, rational numbers $r_{k,j}$, natural numbers $s_{k,j}$, and globally subanalytic unit $u_k$ which is of the special form as given by the globally subanalytic preparation theorem.}}\right.
\end{equation}
Lion and Rolin \cite{LR97} used \eqref{eq:formalConstrPrep} when proving that any parameterized integral of a constructible function is piecewise given by constructible functions, but on pieces that need not be globally subanalytic sets.  Comte, Lion and Rolin \cite{CLR2000} also used \eqref{eq:formalConstrPrep} when proving that any parameterized integral of a globally subanalytic function is a constructible function.  The authors then subsumed both of these results in \cite{CluckersMiller1} by showing that $F(x) = \int_{\RR^n}f(x,y)dy$ is a constructible function on $E$ if $f:E\times\RR^n\to\RR$ is a constructible function such that $f(x,\cdot)\in L^1(\RR^n)$ for all $x\in E$.  The key to doing this was to improve \eqref{eq:formalConstrPrep} by showing that in the special case of $n=1$, if $f(x,\cdot)\in L^1(\RR)$ for every $x\in E$, then the sums can be constructed in such a way so that each term $T_k(x,y)$ is also integrable in $y$ for every $x\in E$.  This alleviated various analytic considerations employed in \cite{LR97} and \cite{CLR2000} to get around the awkward fact that \eqref{eq:formalConstrPrep} allows the possibility of expressing integrable functions as sums of nonintegrable functions.  In \cite{CluckersMiller2} the authors again improved upon \eqref{eq:formalConstrPrep} in the special case of $n=1$ by dropping the assumption that $f(x,y)$ be integrable in $y$ for every $x\in E$, and then showing that the set $\Int(f,E) := \{x\in E : f(x,\cdot)\in L^1(\RR)\}$ is the zero locus of a constructible function on $E$, and that the sums in \eqref{eq:formalConstrPrep} can be constructed so that each term $T_k(x,y)$ is integrable in $y$ for every $x\in E$, provided that we only require the equation $f(x,y) = \sum_k T_k(x,y)$ to hold for those values of $(x,y)$ with $x\in \Int(f,E)$.

The preparation theorem of this paper strengthens this line of results even further by considering an arbitrary positive integer $n$, not just $n=1$, and by considering all $L^p$ classes simultaneously, not just $L^1$.  In order to convey the main idea of the theorem without getting bogged down in technicalities, let us use the Lebesgue measure on $\RR^n$ (thus $\mu=1$, where $\mu$ is the function from \eqref{eq:LCintro}), and let us also only consider the $L^p$ classes for finite values of $p$.  Under these simplifying assumptions, the preparation theorems states that the sums $\sum_{k\in K} T_k(x,y)$ in \eqref{eq:formalConstrPrep} can be constructed in such a way so that there is a partition $\{K_i\}_i$ of the finite index set $K$ such that for each $x\in E$ and $p\in(0,\infty)$ with $f(x,\cdot)\in L^p(\RR^n)$, and for each $i$, either $T_k(x,\cdot)$ is in $L^p$ for all $k\in K_i$, or else $\sum_{k\in K_i} T_k(x,y) = 0$ for all $y$.  So, for instance, if for some fixed value of $p$ the function $f(x,\cdot)$ happened to be in $L^p(\RR^n)$ for every $x\in E$, then the sums in \eqref{eq:formalConstrPrep} can be constructed so that each term $T_k(x,\cdot)$ is in $L^p$ for every $x\in E$, for we may simply omit the remaining terms in the sum because they collectively sum to zero.

Part of our interest in developing a good integration theory for constructible functions comes from a desire to study various integral transforms in the constructible setting.  And, to summarize, we now have three main tools at our disposal to conduct such studies: the constructible functions are stable under integration, they have simple asymptotic behavior, and they have a multivariate preparation theorem with good analytic properties.   We apply these three tools to the field of harmonic analysis in \cite{CluckersMiller4} by proving a theorem that bounds the decay rates of parameterized families of oscillatory integrals.  This is an adaptation of a classical theorem found in Stein \cite[Chapter VIII, Section 3.2]{Stein} but with different assumptions.  The classical theorem bounds a single oscillatory integral with an amplitude function that is smooth and compactly supported and a phase function that is smooth and of finite type.  In contrast, we give a uniform bound on a parameterized family of oscillatory integrals with an amplitude function that is constructible and integrable and a phase function that is globally subanalytic and satisfies a certain ``hyperplane condition'' (which closely relates to the notion of ``finite type'' in our setting).  Thus by restricting our attention to the special classes of constructible and globally subanalytic functions, we obtain a much more global, parameterized version of the classical theorem with significantly weaker analytic assumptions.  This application of our preparation theorem was, in fact, the initial stimulus for our work in this paper.

\section{The Main Results}\label{s:mainResults}

This section formulates our main theorem on the structure of diagrams of Lebesgue classes and also a simple version of the related preparation theorem; see Theorems \ref{thm:LC} and \ref{thm:constrPrepSimple}. It also gives two key supporting theorems used to prove these results; see Theorems \ref{thm:vanish} and \ref{thm:subRect}. The full version of the preparation theorem can be found in Section \ref{s:mainThmProofs} as Theorem \ref{thm:constrPrep}. We begin by fixing some notation to be used throughout the paper.

\begin{notation}
Denote the set of natural numbers by $\NN=\{0,1,2,3,\ldots\}$.  Denote the subset and proper subset relations by $\subset$ and $\subsetneq$, respectively.  Write $x = (x_1,\ldots,x_m)$ and $y = (y_1,\ldots,y_n)$ for the standard coordinates on $\RR^m$ and $\RR^n$, respectively.  If $f = (f_1,\ldots,f_n):D\to\RR^n$ is a differentiable map with $D\subset\RR^{m+n}$, write
\[
\PD{}{f}{y}(x,y) = \left(\PD{}{f_i}{y_j}(x,y)\right)_{(i,j)\in\{1,\ldots,n\}^2}
\]
for its Jacobian matrix in $y$.  Define the coordinate projection $\Pi_m:\RR^{m+n}\to\RR^n$ by
\[
\Pi_m(x,y) = x.
\]
For any $D\subset\RR^{m+n}$ and $x\in\RR^m$, define the \emph{fiber of $D$ over $x$} by
\[
D_x = \{y\in\RR^n : (x,y)\in D\}.
\]
For any $d\in\{0,\ldots,n\}$ and $\Box\in\{<,\leq,>,\geq\}$, define $y_{\Box d} = (y_i)_{i\Box d}$.  For example, $y_{\leq d} = (y_1,\ldots,y_d)$, and in accordance with our above notation for coordinate projections, the maps $\Pi_d:\RR^n\to\RR^d$ and $\Pi_{m+d}:\RR^{m+n}\to\RR^{m+d}$ are given by $\Pi_d(y) = y_{\leq d}$ and $\Pi_{m+d}(x,y) = (x,y_{\leq d})$.  More generally, if $\lambda:\{1,\ldots,d\}\to\{1,\ldots,n\}$ is an increasing map, define $\Pi_{m,\lambda}:\RR^{m+n}\to\RR^{m+d}$ by
\[
\Pi_{m,\lambda}(x,y) = (x,y_\lambda),
\]
where $y_\lambda = (y_{\lambda(1)},\ldots,y_{\lambda(d)})$.
\end{notation}

For any set $D\subset\RR^n$, call a function $f:D\to\RR^m$ {\bf\emph{ analytic}} if it extends to an analytic function on a neighborhood of $D$ in $\RR^n$.  A {\bf\emph{restricted analytic function}} is a function $f:\RR^n\to\RR$ such that the restriction of $f$ to $[-1,1]^n$ is analytic and $f(x) = 0$ on $\RR^n\setminus[-1,1]^n$.  We shall henceforth call a set or function {\bf\emph{subanalytic}} if, and only if, it is definable (in the sense of first-order logic) in the expansion of the real field by all restricted analytic functions.  Thus in this paper, the word ``subanalytic'' is an abbreviation for the phrase ``globally subanalytic'', and in this meaning, the natural logarithm $\log:(0,\infty)\to\RR$ is not subanalytic.  For any subanalytic set $D$, let $\C(D)$ denote the $\RR$-algebra of functions on $D$ generated by the functions of the form $x\mapsto f(x)$ and $x\mapsto \log g(x)$, where $f:D\to\RR$ and $g:D\to(0,\infty)$ are subanalytic.  A function that is a member of $\C(D)$ for some subanalytic set $D$ is called a {\bf\emph{constructible function}}.\footnote{We use the phrase \emph{``real'' constructible functions} in our title to distinguish them from an analogous notion of constructible functions in the $p$-adic setting, from which we borrow the terminology.}

Consider a Lebesgue measurable set $D\subset\RR^{m+n}$ and Lebesgue measurable functions $f:D\to\RR$ and $\nu:D\to[0,\infty)$, and put $E = \Pi_m(D)$.  Define the {\bf \emph{diagram of Lebesgue classes of $f$ over $E$ with respect to $\nu$}} to be the set
\[
\LC(f,\nu,E) = \{(x,p)\in E\times(0,\infty] : f(x,\cdot)\in L^p(\nu_x)\},
\]
where $\nu_x$ is the positive measure on $D_x$ defined by setting
\begin{equation}\label{eq:nu}
\nu_x(Y) = \int_Y \nu(x,y) dy
\end{equation}
for each Lebesgue measurable set $Y\subset D_x$, where the integration in \eqref{eq:nu} is with respect to the Lebesgue measure on $\RR^n$.  Thus for each $x\in E$, when $0 < p < \infty$, the function $f(x,\cdot)$ is in $L^p(\nu_x)$ if and only if
\[
\int_{D_x} |f(x,y)|^p \nu(x,y) dy < \infty,
\]
and the function $f(x,\cdot)$ is in $L^\infty(\nu_x)$ if and only if there exist a constant $M > 0$ and a Lebesgue measurable set $Y\subset D_x$ such that $\nu_x(Y) = 0$ and $|f(x,y)| \leq M$ for all $y\in D_x\setminus Y$.

The fibers of $\LC(f,\nu,E)$ over $E$ and over $(0,\infty]$ are both of interest, so we give them special names.  For each $x\in E$, define the {\bf\emph{set of Lebesgue classes of $f$ at $x$ with respect to $\nu$}} to be the set
\[
\LC(f,\nu,x) = \{p\in(0,\infty] : f(x,\cdot)\in L^p(\nu_x)\}.
\]
For each $p\in(0,\infty]$, define the {\bf\emph{$L^p$-locus of $f$ in $E$ with respect to $\nu$}} to be the set
\[
\Int^p(f,\nu,E) = \{x\in E : f(x,\cdot)\in L^p(\nu_x)\}.
\]
When $\nu=1$ (which is the case of most interest because it means we are simply using the $n$-dimensional Lebesgue measure on $D_x$), it is convenient to simply write $\LC(f,E)$, $\LC(f,x)$ and $\Int^p(f,E)$ and to drop the phrase ``with respect to $\nu$'' in the names of theses sets.  Also when $\nu=1$, we shall write $L^p(D_x)$ rather than $L^p(\nu_x)$.  The set $\Int^1(f,E)$ was studied by the authors in \cite{CluckersMiller2} (focusing on the case of $n=1$), where it was denoted by $\Int(f,E)$ and called the ``locus of integrability of $f$ in $E$.''

We order the set $[0,\infty]$ in the natural way, and we topologize $(0,\infty]$ by letting
\[
\{(a,b) : 0\leq a < b < \infty\}\cup\{\{\infty\}\}
\]
be a base for its topology.  A convex subset of $(0,\infty]$ is called a {\bf\emph{subinterval of $(0,\infty]$}}.  The {\bf\emph{endpoints}} of a subinterval of $(0,\infty]$ are its supremum and infimum in $[0,\infty]$.  Note that the empty set is a subinterval of $(0,\infty]$, and that $\sup\emptyset = 0$ and $\inf\emptyset = \infty$.

It is elementary to see that $\LC(f,\nu,x)$ is a subinterval of $(0,\infty]$ for each $x\in E$.  Much more can be said when $f$ and $\nu$ are assumed to be constructible functions or their powers.

\begin{theorem}[The Structure of Diagrams of Lebesgue Classes]\label{thm:LC}
Let $q > 0$ and $f,\mu\in\C(D)$ for some subanalytic set $D\subset\RR^{m+n}$, and put $E = \Pi_m(D)$ and $\I = \{\LC(f,|\mu|^q,x) : x\in E\}$.  Then $\I$ is a finite set of open subintervals of $(0,\infty]$ with endpoints in $\left(\Span_{\QQ}\{1,q\}\cap[0,\infty)\right)\cup\{\infty\}$, and for each $I\in\I$ there exists $g_I\in\C(E)$ such that
\begin{equation}\label{eq:LC}
\{x\in E : I\subset\LC(f,|\mu|^q,x)\} = \{x\in E : g_I(x) = 0\}.
\end{equation}
Moreover, if $f$ and $\mu$ are subanalytic, then each of the functions $g_I$ can be taken to be subanalytic.
\end{theorem}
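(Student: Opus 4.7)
The plan is to deduce Theorem~\ref{thm:LC} from the preparation theorem (Theorem~\ref{thm:constrPrep}) together with elementary one-variable integrability estimates. First I would apply the preparation simultaneously to $f$ and $\mu$ on a common subanalytic partition of $D$. On each piece, this produces expressions $f = \sum_{k \in K} T_k$ and an analogous decomposition for (a suitable prepared form of) $|\mu|^q$, where each summand has the monomial shape
\[
g(x)\,\prod_{j=1}^{n} |y_j - \theta_j(x, y_{<j})|^{r_j}\,\bigl(\log|y_j - \theta_j(x, y_{<j})|\bigr)^{s_j}\,u(x,y)
\]
with triangular subanalytic translations $\theta_j$, constants $r_j\in\QQ$ and $s_j\in\NN$, a subanalytic unit $u$ bounded away from $0$ and $\infty$ on the fiber, and $g\in\C(E)$ depending only on $x$. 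Crucially, one arranges the partition $\{K_i\}_i$ of $K$ discussed after \eqref{eq:formalConstrPrep}, with the property that for every $x$ and $p$, and every $i$, either every $T_k(x,\cdot)$ with $k\in K_i$ lies in $L^p(|\mu|_x^q)$, or else $\sum_{k\in K_i}T_k(x,\cdot) \equiv 0$. This allows the $L^p$-analysis of $f$ to proceed term by term.

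Second, I would compute the Lebesgue class of a single pair of terms. Because the subanalytic units are bounded above and below by positive constants, $|T_k(x,y)|^p$ times the corresponding prepared factor of $|\mu(x,y)|^q$ is comparable to a pure monomial in the $|y_j-\theta_j|$'s with exponents $pr_j+qr'_j$ and $ps_j+qs'_j$. Iterating Fubini's theorem along the triangular structure and applying the standard criterion that $\int_0^1 t^\alpha|\log t|^\beta\,dt<\infty$ iff $\alpha>-1$, or $\alpha=-1$ and $\beta<-1$ (and its symmetric near $\infty$), one reads off the condition for integrability as a finite boolean combination of strict linear inequalities in $p$ with threshold values in $\Span_\QQ\{1,q\}$. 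The case $p=\infty$ is handled separately by a direct essential-supremum argument against the measure $|\mu|_x^q$ (using again the boundedness of the units). The resulting set of admissible $p$'s for a single pair of prepared terms is an open subinterval of $(0,\infty]$ with the stated endpoint structure, depending on $x$ only through the vanishing pattern of finitely many coefficient functions in $\C(E)$.

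Third, the partition property reduces membership of $f$ in $L^p(|\mu|_x^q)$ to the term-by-term conditions, so $\LC(f,|\mu|^q,x)$ is cut out by a finite boolean combination of equations $g_k(x)=0$ and non-equations $g_k(x)\neq 0$ with $g_k\in\C(E)$. Since only finitely many such boolean conditions arise as $x$ varies, $\I$ is finite and each $I\in\I$ is an open subinterval of $(0,\infty]$ with endpoints in $(\Span_\QQ\{1,q\}\cap[0,\infty))\cup\{\infty\}$. For each $I$ one then packages the corresponding vanishing condition into a single $g_I\in\C(E)$, using Theorem~\ref{thm:vanish} to write a conjunction of vanishings as the zero set of one constructible function and combining disjunctions via products or sums of squares so that the right-hand side of \eqref{eq:LC} is realized. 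The subanalytic version follows since the preparation of subanalytic $f$ and $\mu$ produces subanalytic coefficients. The main obstacle I expect is the simultaneous \emph{strong} preparation of $f$ and $\mu$ in arbitrary dimension and for all $L^p$ classes at once — in particular, arranging the partition $\{K_i\}$ so that prepared monomials sharing the same multi-indices $(r,s)$ either all retain their integrability profile or collectively vanish, and guaranteeing openness of the resulting $L^p$-intervals even at the critical boundary exponents where $\alpha=-1$; Theorems \ref{thm:vanish} and \ref{thm:subRect} are presumably the workhorses for this delicate step.
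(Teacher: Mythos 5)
Your strategy correctly identifies the three ingredients (a strong preparation result, a term-by-term $L^p$ computation via Fubini and the one-variable criterion, and Theorem~\ref{thm:vanish} to package vanishing loci), but the route you propose differs from the paper's in one structurally important way, and that difference hides the real work. You plan to \emph{deduce} Theorem~\ref{thm:LC} from Theorem~\ref{thm:constrPrep}; the paper does not do this, and indeed cannot do it cleanly, because the statement of Theorem~\ref{thm:constrPrep} is itself quantified over the set $\I(f,\mu,q) = \{\LC(f,|\mu|^q,x) : x\in\Pi_m(D)\}$ whose finiteness and interval structure are precisely the conclusions of Theorem~\ref{thm:LC}. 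In the paper, Theorems~\ref{thm:LC} and~\ref{thm:constrPrep} are \emph{sibling} consequences of Proposition~\ref{prop:constrRect}, the rectilinear preparation of constructible functions, via the same intermediate lemmas (Lemma~\ref{lemma:OpenInt} and its supports Lemma~\ref{lemma:asympEst} and Lemma~\ref{lemma:triangle}); neither is derived from the other. Your proposal, taken literally, would need to prove Theorem~\ref{thm:constrPrep} first, without ever invoking the structure of $\I$ — which the paper does manage, but only because it has already built the rectilinear machinery.

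The deeper issue is that the ``partition $\{K_i\}$'' property is not something one can inherit as a black box: it is the technical heart and is achieved by a change of coordinates that you omit. The paper first \emph{rectilinearizes} (Proposition~\ref{prop:subRect} and then~\ref{prop:constrRect}): after pulling $f$ and $\mu$ back by an analytic isomorphism over $\RR^m$, the domain fiber becomes $\Pi_l(B_x)\times(0,1)^{n-l}$ and each prepared summand is a genuine monomial in the last $n-l$ variables, with a center-free unit. Only in these coordinates is the $L^p$ calculation literally an iterated one-variable integral over a box with no entanglement from the triangular centers $\theta_j$. The critical/noncritical split $\Delta^{\CR}$ vs.\ $\Delta^{\NC}$ (Proposition~\ref{prop:constrRect}, condition \eqref{eq:CR}) is what guarantees that the minimal exponents $\bar r_i(f,A,x)$ and $\bar s_i(f,A,x)$ that govern $\LC(f\Restr{A},|\mu|^q\Restr{A},x)$ depend only on which critical coefficient functions $f_{r,s}(x,\cdot)$ vanish identically in $y_{\leq l}$ — and Lemma~\ref{lemma:asympEst} is the nontrivial asymptotic estimate showing that the critical leading term is genuinely attained along a dense set of $y'$, so that the integrability and boundedness criteria in Lemma~\ref{lemma:intBdd}/Corollary~\ref{cor:rectIntBdd} are sharp, not just sufficient. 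Your sketch gives the ``sufficient'' direction (Lemma~\ref{lemma:triangle} plus monomial bounds) but does not explain why failure of a linear inequality forces non-integrability for that $x$; that is exactly what Lemma~\ref{lemma:asympEst} supplies. Finally, a small arithmetic point: the criterion ``$\int_0^1 t^{\alpha}|\log t|^{\beta}\,dt < \infty$ iff $\alpha > -1$, or $\alpha = -1$ and $\beta < -1$'' is wrong — when $\alpha = -1$ the substitution $u = -\log t$ gives $\int_0^\infty u^\beta\,du$, which diverges for every $\beta$. This is harmless here since the $\beta$'s are always natural numbers (so only $\alpha > -1$ matters, as in Lemma~\ref{lemma:intBdd}), but it signals a need for care in the one-variable reduction.
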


Theorem \ref{thm:LC} has been formulated in such a way so as to make it adaptable to a variety of situations.  Section \ref{s:cor} contains an extensive list of corollaries that further explain how the theorem elucidates the structure of $\LC(f,|\mu|^q,E)$, and how it can be easily adapted to give analogous theorems about local $L^p$ spaces, complex measures, and measures defined from differential forms on subanalytic sets, all within the context of constructible functions.

The proof of Theorem \ref{thm:LC} is intimately linked to the proof of a preparation theorem for constructible functions that is stated in full strength in Section \ref{s:mainThmProofs}, where it is proved.  Here we state only a simple version of the preparation theorem that is sufficient for our application to oscillatory integrals in \cite{CluckersMiller4}.  But first, we need one more definition: a {\bf\emph{cell over $\RR^m$}} is a subanalytic set $A\subset\RR^{m+n}$ such that for each $i\in\{1,\ldots,n\}$, the set $\Pi_{m+i}(A)$ is either the graph of an analytic subanalytic function on $\Pi_{m+i-1}(A)$, or
\begin{equation}\label{eq:cell}
\Pi_{m+i}(A) = \{(x,y_{\leq i}) : (x,y_{<i})\in\Pi_{m+i-1}(A), a_i(x,y_{<i})\,\, \Box_1 \,\, y_i \,\,\Box_2 \,\, b_i(x,y_{<i})\}
\end{equation}
for some analytic subanalytic functions $a_i,b_i:\Pi_{m+i-1}(A)\to\RR$ for which $a_i(x,y_{<i}) < b_i(x,y_{<i})$ on $\Pi_{m+i-1}(A)$, where $\Box_1$ and $\Box_2$ denote either $<$ or no condition.

\begin{theorem}[Preparation of Constructible Functions - Simple Version]\label{thm:constrPrepSimple}
Let $p\in(0,\infty)$ and $f\in\C(D)$ for some subanalytic set $D\subset\RR^{m+n}$, and assume that $\Int^p(f,\Pi_m(D)) = \Pi_m(D)$.  Then there exists a finite partition $\A$ of $D$ into cells over $\RR^m$ such that for each $A\in\A$ whose fibers over $\Pi_m(A)$ are open in $\RR^n$, we may write $f$ as a finite sum
\[
f(x,y) = \sum_{k} T_k(x,y)
\]
on $A$, with $\Int^p(T_k,\Pi_m(A)) = \Pi_m(A)$ for each $k$, as follows: there exists a bounded function $\varphi:A\to(0,\infty)^M$ of the form
\[
\varphi(x,y)
=
\left(
c_i(x)\prod_{j=1}^{n}|y_j-\theta_j(x,y_{<j})|^{\gamma_{i,j}}
\right)_{\!\!i\in\{1,\ldots,M\}},
\]
and for each $k$,
\begin{equation}\label{eq:constrPrepSimple}
T_k(x,y)
=
g_k(x)
\left(\prod_{i=1}^{n}|y_{i} - \theta_i(x,y_{<i})|^{r_{k,i}} \left(\log|y_i-\theta_i(x,y_{<i})|\right)^{s_{k,i}}
\right)
U_k\circ\varphi(x,y),
\end{equation}
where the $g_k:\Pi_m(A)\to\RR$ are constructible, the $c_i:\Pi_m(A)\to(0,\infty)$ and $\theta_i:\Pi_{m+i-1}(A)\to\RR$ are analytic subanalytic functions, the graph of each $\theta_i$ is disjoint from $\Pi_{m+i}(A)$, the $\gamma_{i,j}$ and $r_{k,i}$ are rational numbers, the $s_{k,i}$ are natural numbers, and the $U_k$ are positively-valued analytic functions on the closure of the range of $\varphi$.

In addition, the fact that $\Int^p(T_k,\Pi_m(A)) = \Pi_m(A)$ only depends on the values of the $r_{k,i}$, and not the values of $s_{k,i}$, in the following sense: we have $\Int(T'_k,\Pi_m(A)) = \Pi_m(A)$ for any function $T'_k$ on $A$ of the form
\[
T'_k(x,y) = \prod_{i=1}^{n}|y_{i} - \theta_i(x,y_{<i})|^{r_{k,i}} \left(\log|y_i-\theta_i(x,y_{<i})|\right)^{s'_{k,i}},
\]
where the $r_{k,j}$ are as in \eqref{eq:constrPrepSimple} and the $s'_{k,i}$ are arbitrary natural numbers.
\end{theorem}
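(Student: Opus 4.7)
The plan is to view Theorem \ref{thm:constrPrepSimple} as a specialization of the full preparation theorem, Theorem \ref{thm:constrPrep} of Section \ref{s:mainThmProofs}, to the case of trivial weight and a single finite exponent $p$; hence my proposal has two parts. First I would show how the simple version falls out of the full one, and then I would sketch how to prove the full one directly using the supporting Theorems \ref{thm:vanish} and \ref{thm:subRect}.

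For the reduction, apply the full preparation theorem to $f$ (with constant weight $\mu=1$) to obtain a finite cell decomposition $\A$ of $D$ over $\RR^m$ and, on each cell $A$ with open fibers, a representation $f(x,y) = \sum_{k \in K} T_k(x,y)$ of the form \eqref{eq:constrPrepSimple}. The full version also packages a partition of $K$ into blocks such that, on each block, either the partial sum is $L^p$-integrable term by term or it vanishes identically on $A$. The hypothesis $\Int^p(f,\Pi_m(D))=\Pi_m(D)$ together with Theorem \ref{thm:LC} forces each ``bad'' block to vanish identically: the constructible function $g_I$ whose zero set equals $\{x\in\Pi_m(A) : I\subset\LC(T_k,x)\}$ for the relevant interval $I\ni p$ is forced to vanish on all of $\Pi_m(A)$, which combined with the block structure kills the corresponding sub-sum. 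Discarding those zero blocks produces the required decomposition. The final addendum is then immediate: by Fubini and the boundedness of $U_k \circ \varphi$, the $L^p$-integrability of $T_k$ reduces to convergence of a product of one-variable integrals $\int_0^c t^{pr}|\log t|^{ps}\,dt$, each of which converges iff $pr > -1$, independent of $s$; so the integrability criterion depends only on $(r_{k,i})_i$.

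For the full preparation theorem itself, I would argue by induction on $n$. The base case $n=1$ follows the pattern of the $L^1$ statement proved in \cite{CluckersMiller2}, upgraded to arbitrary $p$ by using the one-variable version of Theorem \ref{thm:LC} in place of the $L^1$-locus result; the core asymptotic-independence calculation for the monomials $t^r(\log t)^s$ near $0$ and near $\infty$ carries over with $p$-th powers. In the inductive step, I would begin with the Lion--Rolin shape \eqref{eq:formalConstrPrep}, apply Theorem \ref{thm:subRect} to rectilinearize the cells so that the centers $\theta_i$ and unit arguments are well-separated across coordinates, integrate with respect to the last coordinate $y_n$ via Fubini to reduce to a preparation problem on $\Pi_{m+n-1}(A)$, invoke the induction hypothesis, and finally reassemble.

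The main obstacle is the asymptotic-cancellation step encoded in Theorem \ref{thm:vanish}: one must show that if a finite sum of terms $\prod_i |y_i - \theta_i|^{r_i}(\log|y_i-\theta_i|)^{s_i}(U\circ\varphi)$ is in $L^p$ on every fiber while some individual summand is not, then an identifiable sub-sum vanishes identically. In one variable this follows from linear independence of the $t^r(\log t)^s$ under asymptotic comparison at the relevant endpoints. In several variables, the centers $\theta_i(x,y_{<i})$ depend on earlier coordinates and the units entangle variables, so the asymptotic analysis must be carried out iteratively while preserving subanalyticity of the vanishing loci and definability of the resulting decomposition---this is precisely what Theorems \ref{thm:vanish} and \ref{thm:subRect} are engineered to handle, and I expect most of the technical weight of the proof in Section \ref{s:mainThmProofs} to lie there.
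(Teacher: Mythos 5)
Your reduction from Theorem \ref{thm:constrPrepSimple} to the full preparation theorem, Theorem \ref{thm:constrPrep}, is essentially correct and matches what the paper does via Corollary \ref{cor:constrPrep}: with $\mu=1$, $q=1$ and $P=\{p\}$, the hypothesis $\Int^p(f,\Pi_m(D))=\Pi_m(D)$ means $p\in\LC(f,x)$ for all $x$, so any block $K$ of the partition $\P(f,A)$ on which alternative 2 of Theorem \ref{thm:constrPrep} fires has $\sum_{\kappa\in K}T_\kappa\equiv 0$ on $A$ and can be deleted, and the addendum about independence from the $s_{k,i}$ is exactly \eqref{eq:T'k} (made possible, after distributing $\log\prod_j|\tld y_j|^{\beta_{i,j}}$, precisely because $\infty\notin P$). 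Your Fubini-on-rectilinear-cells justification of the addendum is also the same idea as Lemma \ref{lemma:intBdd} together with Corollary \ref{cor:rectIntBdd}.

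Where the proposal diverges, and in my view has a genuine gap, is the sketch of the proof of the full theorem. You propose induction on $n$ by \emph{integrating out $y_n$ via Fubini} and reducing to a preparation problem on $\Pi_{m+n-1}(A)$. But integrating out $y_n$ replaces $f$ by a different constructible function $F(x,y_{<n})=\int f(x,y)\,dy_n$; preparing $F$ in $n-1$ variables does not produce a decomposition of $f$ in $n$ variables, so there is no ``reassemble'' step that recovers the required sum $f=\sum_k T_k$ with term-by-term $L^p$ control. The paper never integrates out a variable when proving the preparation theorem. Instead, the engine is Proposition \ref{prop:constrRect}: after rectilinearizing with Proposition \ref{prop:subRect}, one expands each constructible $f\circ F$ as a convergent power series in the $n-l$ ``free'' rectilinear variables $y_{>l}$ with coefficients that are polynomials (in $\log y_{\leq l}$, the $g_j(x)$, and $y_{\leq l}$) over the Noetherian ring $\O_E[\cdot]$ of analytic germs on a compact set, then applies a parameterized Dickson's lemma (Lemmas \ref{lemma:Dickson}--\ref{lemma:analSum}) to split the monomial exponents in $y_{>l}$ into a finite \emph{critical} set $R^{\CR}_s$ and a \emph{noncritical} set $R^{\NC}_s$ with the key implication \eqref{eq:CR}: a noncritical term can be nonzero at $(x,y_{\le l})$ only if some critical term with componentwise-smaller exponent is already nonzero there. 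That is what makes the $L^p$-loci computable from finitely many fiberwise-vanishing conditions (Lemma \ref{lemma:OpenInt}) and hence constructible via Theorem \ref{thm:vanish}. Your sketch attributes the asymptotic-cancellation work to Theorem \ref{thm:vanish}, but that theorem is a comparatively light o-minimality/Noetherianity argument; the real technical content you are missing is the CR/NC decomposition from the power-series argument, together with Lemmas \ref{lemma:Tk} and \ref{lemma:TkPullback} to transport the decomposition back to the original coordinates.
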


The key aspect of Theorem \ref{thm:constrPrepSimple} that is of interest, and what makes its proof nontrivial, is that the piecewise sum representation of $f$ can be constructed so that each of its terms $T_k(x,\cdot)$ are in the same $L^p$ class as $f(x,\cdot)$; namely, $\Int^p(T_k,\Pi_m(A)) = \Pi_m(A)$ for each $A$ and $T_k$, provided that $\Int^p(f,\Pi_m(D)) = \Pi_m(D)$.  There is an analog of Theorem \ref{thm:constrPrepSimple} for $p=\infty$, but then one must replace \eqref{eq:constrPrepSimple} with the more complicated form
\begin{equation}\label{eq:constrPrepSimpleBdd}
T_k(x,y)
=
g_k(x)
\left(
\prod_{i=1}^{n}|y_{i} - \theta_i(x,y_{<i})|^{r_{k,i}}
\left(
\log \prod_{j=1}^{n} |y_j-\theta_j(x,y_{<j})|^{\beta_{i,j}}
\right)^{s_{k,i}}
\right)
U_k\circ\varphi(x,y),
\end{equation}
where the $\beta_{i,j}$ are rational numbers and everything else is as before, and where the fact that $\Int^\infty(T_k,\Pi_m(A)) = \Pi_m(A)$ now depends on all the values of the $r_{k,i}$, $s_{k,i}$ and $\beta_{i,j}$, not just the values of the $r_{k,i}$ alone.

In the course of proving our main results, Theorems \ref{thm:LC} and \ref{thm:constrPrepSimple}, we shall also prove a theorem on the fiberwise vanishing of constructible functions and a theorem on parameterized rectilinearization of subanalytic functions, given below.

\begin{theorem}[Fiberwise Vanishing of Constructible Functions]\label{thm:vanish}
If $f\in\C(D)$ for a subanalytic set $D\subset\RR^{m+n}$ and $E = \Pi_m(D)$, then there exists $g\in\C(E)$ such that
\[
\{x\in E : \text{$f(x,y) = 0$ for all $y\in D_x$}\}
=
\{x\in E : g(x) = 0\}.
\]
\end{theorem}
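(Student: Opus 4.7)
The plan is to prove Theorem \ref{thm:vanish} by induction on $n$, leveraging the basic preparation statement \eqref{eq:formalConstrPrep} together with the linear independence of prepared monomials. The base case $n = 0$ is immediate because $D_x$ is the one-point set whenever $x \in E = D$, so one simply takes $g := f$.

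For the inductive step with $n \geq 1$, I would first apply \eqref{eq:formalConstrPrep}, together with a subanalytic cell decomposition, to partition $D$ into finitely many cells $A$ over $\RR^m$ on which $f$ is written as a finite sum
\[
f(x, y) = \sum_{k \in K_A} g_k(x)\, b_k(x, y)\, u_k(x, y),
\]
with $b_k(x, y) = \prod_{j=1}^{n} |y_j - \theta_j(x, y_{<j})|^{r_{k,j}} \bigl(\log|y_j - \theta_j(x, y_{<j})|\bigr)^{s_{k,j}}$ and $u_k$ a subanalytic unit of the standard prepared form. After combining like terms, I may assume the multi-indices $(r_{k,\cdot}, s_{k,\cdot})$ are pairwise distinct within each cell. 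The cells then split into two kinds. On an \emph{open cell} $A$ (one whose fibers $A_x$ are open in $\RR^n$), the standard linear independence of distinct prepared monomials, derivable from the asymptotic analysis of \cite[Theorem 1.3]{CluckersMiller1}, ensures that for each fixed $x \in \Pi_m(A)$ the functions $y \mapsto b_k(x, y) u_k(x, y)$ are $\RR$-linearly independent on $A_x$. Consequently, $f(x, \cdot) \equiv 0$ on $A_x$ if and only if $g_k(x) = 0$ for every $k \in K_A$, so the vanishing locus over $\Pi_m(A)$ is the zero set of the constructible function $h_A := \sum_{k \in K_A} g_k^{\,2}$. On a \emph{graph cell} $A$, where at least one coordinate $y_i$ is pinned to an analytic subanalytic function of $(x, y_{<i})$, I substitute the graph relations to identify $A$ with a cell in some $\RR^{m+d}$ with $d < n$; the restriction of $f$ pulls back to a constructible function on this reduced cell, and the inductive hypothesis produces $h_A \in \C(\Pi_m(A))$ cutting out the fiberwise vanishing locus.

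To assemble a single $g \in \C(E)$, I would refine the collection $\{\Pi_m(A)\}_A$ to a finite subanalytic partition $\{E_1, \ldots, E_N\}$ of $E$ such that each $\Pi_m(A)$ is a union of some $E_i$'s. The characteristic function $\mathbf{1}_{E_i}$ is subanalytic, so each $h_A$ extends to a constructible function on all of $E$ by multiplying through by $\mathbf{1}_{E_i}$ (after first harmlessly extending the positive subanalytic functions appearing inside logarithms to be $1$ off $E_i$). Summing the squares of all such extensions across all cells produces a constructible function $g$ on $E$ whose zero locus is exactly $\{x \in E : f(x, \cdot) \equiv 0 \text{ on } D_x\}$, as required.

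The main obstacle is the linear independence statement on open cells: one must rule out the possibility that distinct prepared monomials $b_k(x, \cdot) u_k(x, \cdot)$ could conspire to cancel on a full-dimensional open fiber $A_x$ unless each coefficient $g_k(x)$ already vanishes, and this must hold uniformly in $x$ and in the presence of the non-constant but bounded units $u_k$. Justifying this cleanly requires iteratively extracting dominant asymptotic behavior near the strata $\{y_j = \theta_j(x, y_{<j})\}$, reducing ultimately to the elementary fact that the monomials $y^r (\log y)^s$ are $\RR$-linearly independent as $y \to 0^+$; this iteration is the technical heart of the argument.
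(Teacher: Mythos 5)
Your proposal takes a genuinely different route from the paper, and the route has a gap at the step you yourself flag as the technical heart.

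The paper's proof never invokes any preparation beyond Corollary \ref{cor:subPrep}. It inducts on $n$; for $n=1$ it partitions $D$ into cells on which $f$ is analytic, and on each cell $A$ that is open over $\RR^m$ it argues via o-minimality (the structure $\RR_{\mathrm{an},\exp}$) that there is a single integer $N$ such that for every $x$, either $f(x,\cdot)\equiv 0$ on $A_x$ or $f(x,\cdot)$ has fewer than $N$ zeros. Choosing $N$ disjoint subanalytic sections $\xi_1,\ldots,\xi_N$ of $A$, the zero locus is then cut out by $g_A(x)=\sum_i f(x,\xi_i(x))^2$. Graph cells are trivial, sums of squares assemble the pieces, and for $n>1$ the $n=1$ case is applied twice. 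This is elementary and uses no structure of prepared monomials at all.

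The gap in your argument is the linear‑independence claim for the terms $b_k(x,\cdot)u_k(x,\cdot)$ produced by \eqref{eq:formalConstrPrep}. Distinctness of the exponent tuples $(r_{k,\cdot},s_{k,\cdot})$ does not imply linear independence, because the units $u_k$ are only required to be bounded subanalytic units of the special prepared form, and on a cell whose fibers stay away from the center they can absorb or reproduce arbitrary monomial factors. For instance, on the cell $A=\{1/2<y<1\}$ with center $\theta=0$, the function $y^{-1}$ is a perfectly good $\varphi$-unit, so $1\cdot y^{0}\cdot 1$ and $1\cdot y^{1}\cdot y^{-1}$ are two prepared terms with distinct $(r,s)$ yet equal as functions; a sum $g_1(x)\cdot 1 + g_2(x)\cdot y\cdot y^{-1}$ vanishes identically without $g_1,g_2$ vanishing. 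Relatedly, "combining like terms" destroys the prepared form, since $g_1u_1+g_2u_2$ is no longer $g(x)u(x,y)$. To make your asymptotic‑dominance argument rigorous one must first reshape the preparation so that (i) each $\tilde y_j$ genuinely tends to $0$ independently on the cell, and (ii) the residual exponents and the units are normalized so that the monomial exponents really encode the leading asymptotics; this is exactly the rectilinearization and the critical/noncritical splitting of Propositions \ref{prop:subRect} and \ref{prop:constrRect} and Lemma \ref{lemma:asympEst}. That machinery, however, lives in Sections~\ref{s:subRect}--\ref{s:mainThmProofs} and is developed \emph{after} Theorem~\ref{thm:vanish}, which the paper in fact invokes when proving Theorem~\ref{thm:LC}. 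So as written your argument either relies on an unjustified independence claim or would reorganize the paper by front‑loading a large part of its later machinery; the paper's o-minimal finiteness argument deliberately avoids this.

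Your base case ($n=0$, take $g=f$), the treatment of graph cells by projecting down a dimension, and the sum-of-squares assembly are all fine and consistent with the paper.
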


The parameterized rectilinearization theorem requires some additional terminology to state.  For any sets $A\subset\RR^{m+n}$ and $B\subset\RR^{m+d}$, we call a map $f = (f_1,\ldots,f_{m+n}):B\to A$ an {\bf\emph{analytic isomorphism over $\RR^m$}} if $f$ is a bijection, $f$ and $f^{-1}$ are both analytic, and $f_1(x,z) = x_1$, \ldots, $f_m(x,z) = x_m$, where $z = (z_1,\ldots,z_d)$.

For $l\in\{0,\ldots,d\}$, we say that a set $B\subset\RR^{m+d}$ is {\bf\emph{$l$-rectilinear over $\RR^m$}} if $B$ is a cell over $\RR^m$ such that for each $x\in\Pi_m(B)$, the fiber $B_x$ is an open subset of $(0,1)^d$ of the form
\[
B_x = \Pi_l(B_x)\times(0,1)^{d-l},
\]
where the closure of $\Pi_l(B_x)$ is a compact subset of $(0,1]^l$.  When $B\subset\RR^{m+d}$ is $l$-rectilinear over $\RR^m$, we call a function $u$ on $B$ an {\bf\emph{$l$-rectilinear unit}} if it may written in the form $u = U\circ\psi$, where $\psi:B\to(0,\infty)^{N+d-l}$ is a bounded function of the form
\begin{equation}\label{eq:rectUnit}
\psi(x,z) = \left(c_1(x)\prod_{j=1}^{l}z_{j}^{\gamma_{1,j}}, \ldots, c_N(x)\prod_{j=1}^{l}z_{j}^{\gamma_{N,j}},z_{l+1},\ldots,z_d\right)
\end{equation}
for some positively-valued analytic subanalytic functions $c_i$ and rational numbers $\gamma_{i,j}$, and where $U$ is a positively-valued analytic function on the closure of the range of $\psi$.

\begin{theorem}[Parameterized Rectilinearization of Subanalytic Functions]
\label{thm:subRect}
Let $\F$ be a finite set of subanalytic functions on a subanalytic set $D\subset\RR^{m+n}$.  Then there exists a finite partition $\A$ of $D$ into subanalytic sets such that for each $A\in\A$ there exist $d\in\{0,\ldots,n\}$, $l\in\{0,\ldots,d\}$ and a subanalytic map $F:B\to A$ such that $F$ is an analytic isomorphism over $\RR^m$, the set $B\subset\RR^{m+d}$ is $l$-rectilinear over $\RR^m$, and each function $g$ in the set $\G$ defined by
\[
\G =
\begin{cases}
\{f\circ F\}_{f\in \F},
    & \text{if $d < n$,} \\
\{f\circ F\}_{f\in \F}\cup\{\det\PD{}{F}{y}\},
    & \text{if $d = n$,}
\end{cases}
\]
may be written in the form
\begin{equation}\label{eq:gRect}
g(x,z) = h(x)\left(\prod_{j=1}^{d}z_{j}^{r_j}\right) u(x,z)
\end{equation}
on $B$ for some analytic subanalytic function $h$, rational numbers $r_j$, and $l$-rectilinear unit $u$.
\end{theorem}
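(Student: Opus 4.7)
\textbf{Proof proposal for Theorem \ref{thm:subRect}.}

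The plan is to proceed by induction on the fiber dimension $n$, using the Lion--Rolin preparation theorem for subanalytic functions as the main engine. The base case $n=0$ is trivial: take $d=l=0$, $B=A=D\subset\RR^m$, and $F=\id$, so each $f\in\F$ is already of the form $h(x)$ with an empty monomial product and trivial unit.

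For the inductive step, apply a uniform Lion--Rolin preparation to the family $\F$ (enlarged if needed by auxiliary functions) in the last variable $y_n$. This produces a finite partition of $D$ into subanalytic pieces such that each piece $A$ is either \emph{(i)} the graph of an analytic subanalytic function over $\Pi_{m+n-1}(A)$, or \emph{(ii)} an open band
\[
A = \{(x,y_{<n},y_n) : (x,y_{<n})\in\Pi_{m+n-1}(A),\ a(x,y_{<n})<y_n<b(x,y_{<n})\},
\]
with $a,b$ analytic subanalytic (allowing $\pm\infty$), on which each $f\in\F$ takes the prepared form
\[
f(x,y) = h_f(x,y_{<n})\,|y_n-\theta(x,y_{<n})|^{r_f}\,u_f(x,y),
\]
where $\theta$ is an analytic subanalytic center whose graph is disjoint from $A$, $r_f\in\QQ$, and $u_f$ is a Lion--Rolin subanalytic unit. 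In case (i), compose with the graph parametrization to pass to $\Pi_{m+n-1}(A)\subset\RR^{m+n-1}$ and invoke the inductive hypothesis; the result has $d<n$, so no Jacobian condition is imposed.

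Case (ii) is the heart of the argument. By further preparing the auxiliary functions $b-\theta$ and $\theta-a$ we may assume that $y_n-\theta$ has constant sign on $A$ and that the translated interval $(a-\theta,b-\theta)$ falls into one of finitely many standard shapes relative to $0$. In each configuration we introduce a new last coordinate $z_n$ via a substitution of the form
\[
y_n-\theta(x,y_{<n}) = \alpha(x,y_{<n})\,z_n \quad\text{or}\quad y_n-\theta(x,y_{<n}) = \alpha(x,y_{<n})/z_n,
\]
with $\alpha>0$ analytic subanalytic, chosen so that $z_n$ sweeps out either $(0,1)$ (an \emph{open} direction, not counted in $l$) or a subset of $(0,1]$ with uniformly compact closure (a \emph{compact} direction, counted in $l$). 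This converts $|y_n-\theta|^{r_f}$ into $\alpha^{r_f}z_n^{r_f}$, with the $\alpha^{r_f}$ folded into the coefficient, and rewrites $u_f$ as $U_f\circ\psi$ for a tuple $\psi$ whose last entry is either $z_n$ or a monomial $c(x,y_{<n})\,z_n^\gamma$. We now apply the inductive hypothesis to the finite family of analytic subanalytic functions on $\Pi_{m+n-1}(A)$ consisting of $h_f$, $\theta$, $\alpha$, all subanalytic components appearing in each $\psi$, and---when $d=n$ is required---the scalar factor $\alpha$ or $\alpha/z_n^2$ produced by the $y_n\mapsto z_n$ Jacobian. Composing the rectilinearization thus obtained for the first $n-1$ coordinates with the $z_n$-substitution produces the desired analytic isomorphism $F:B\to A$ over $\RR^m$, with $B$ being $l$-rectilinear where $l$ counts the compact directions used across the $n$ levels.

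The principal technical obstacle is the simultaneous bookkeeping of every subanalytic function entering the Lion--Rolin units: each $u_f=U_f\circ\psi_f$ hides a tuple of subanalytic functions of $(x,y_{<n})$ that must also be rectilinearizable by the inductive step, which is precisely why the theorem is stated for an arbitrary finite family $\F$ rather than a single function. A secondary point is that when $d=n$ the Jacobian $\det\partial F/\partial y$ must itself have the form \eqref{eq:gRect}; this is automatic because the composite substitution is block triangular in $(z_{<n},z_n)$, so $\det\partial F/\partial y$ factors as the product of $\det\partial \tilde F/\partial y_{<n}$ (prepared by induction) and $\partial y_n/\partial z_n$ (a monomial in $z_n$ with subanalytic coefficient), both of which are of monomial-and-unit form. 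Verifying that $F$ is an analytic bijection onto $A$ and that $B$ has the prescribed rectilinear shape is then routine, since each coordinate substitution is a monotone analytic diffeomorphism in a single variable whose image is tautologically the desired $(0,1)$ or compact-subset-of-$(0,1]$ interval once all relevant bound and center functions have been placed in monomial form.
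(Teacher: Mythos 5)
Your proposal outlines a plausible inductive framework, but it contains a genuine gap at the step you call ``the heart of the argument.'' After Lion--Rolin preparation in $y_n$ and translation by the center $\theta$, the remaining cell constraint $a(x,y_{<n}) < y_n - \theta < b(x,y_{<n})$ (in translated coordinates, with $a \geq 0$) will in general have $a$ and $b$ that are \emph{not comparable}: the ratio $a/b$ need not be bounded away from $0$. The simplest example is $D = \{(x,y_1,y_2) : 0<x<1,\ 0<y_1<1,\ xy_1<y_2<y_1\}$, which after one blowup in $y_2$ gives the constraint $x < z_2 < 1$, where $x/1 \to 0$ as $x\to 0$. In such a case, the substitution $y_n-\theta = \alpha(x,y_{<n})\,z_n$ cannot produce a rectilinear range for $z_n$: taking $\alpha = b$ gives $z_n \in (a/b, 1)$, which is neither all of $(0,1)$ (so $z_n$ cannot be an ``open'' direction) nor contained in a uniformly compact subset of $(0,1]$ (so $z_n$ cannot be a ``compact'' direction). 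Your phrase ``falls into one of finitely many standard shapes relative to $0$'' papers over exactly this case without an argument. The paper's proof handles it through a substantially more involved induction in Proposition \ref{prop:subRect}: the cell must be split into regions (e.g.\ $\{a < y_d < Cy_{l+1}\}$ and $\{Cy_{l+1} < y_d < 1\}$), blown up in \emph{lower} variables, and the variables must be \emph{swapped} to reorder compact and open directions; the induction is then run on the cardinality of the support of the exponent tuple of $a$, not simply on $n$.

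There is a secondary, related problem with the ordering of coordinates. The definition of an $l$-rectilinear set requires the $l$ compact coordinates to come \emph{first}, with the $d-l$ open coordinates last. You introduce $z_n$ as the last coordinate and then recurse on the inner block, so if $z_n$ turns out compact while the inner block has some open directions, your $B$ is not rectilinear as defined -- you would need a swap, and that swap is only legitimate if the bounds on $z_n$ depend only on $(x,z_{\leq l})$ and not on the open inner coordinates, which again must be arranged. The paper's proof performs this reordering via its swap construction and, crucially, works from the \emph{inside out} ($d = 1, 2, \ldots, n$, with $\varphi_{<d}$ already rectilinear when treating $y_d$) precisely so that the splitting constants can be expressed in terms of already-rectilinearized inner monomials. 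Your outside-in recursion does not provide this control.
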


Note that if one desires, one can take the $\gamma_{i,j}$ in \eqref{eq:rectUnit} and the $r_j$ in \eqref{eq:gRect} to all be integers.  To do this, simply pull back each map $F$ in Theorem \ref{thm:subRect} by a map $(x,z)\mapsto (x,z_{1}^{k_1},\ldots,z_{d}^{k_d})$ for a suitable choice of positive integers $k_1,\ldots,k_d$.

We now conclude this section with an outline of the rest of the paper.  Section \ref{s:subPrep} formulates a version of the subanalytic preparation theorem of Lion and Rolin \cite{LR97}, which is one of our main tools.  Section \ref{s:cor} gives an extensive list of corollaries of Theorem \ref{thm:LC}.  Section \ref{s:vanish} proves Theorem \ref{thm:vanish}.  Section \ref{s:subRect} states and proves Proposition \ref{prop:subRect}, which is a slightly more detailed version of Theorem \ref{thm:subRect}, and this is used to prove Theorem \ref{thm:LC} in the special case when $f$ and $\mu$ are both subanalytic.  Section \ref{s:constrRect} uses Proposition \ref{prop:subRect} to prove a preparation result for constructible functions in transformed coordinates on rectilinear sets.  Section \ref{s:mainThmProofs} uses Theorem \ref{thm:vanish} and the preparation result on rectilinear sets to prove Theorem \ref{thm:LC} in the general case when $f$ and $\mu$ are both constructible; and by pushing forward this preparation result to the original coordinates, it also proves a preparation theorem for constructible functions, of which Theorem \ref{thm:constrPrepSimple} and its analog for $p=\infty$ described in \eqref{eq:constrPrepSimpleBdd} are special cases.  The paper concludes in Section \ref{s:countEx}, which gives an example that shows the necessity of allowing terms of form \eqref{eq:constrPrepSimpleBdd}, rather than \eqref{eq:constrPrepSimple}, in the analog of Theorem \ref{thm:constrPrepSimple} for $p=\infty$.

\section{The Subanalytic Preparation Theorem}
\label{s:subPrep}

This section formulates a version of the subanalytic preparation theorem of Lion and Rolin \cite{LR97}.  We begin with some multi-index notation.

\begin{notation}\label{notation:multindex}
For any tuples $y = (y_1,\ldots,y_n)$ and $\alpha = (\alpha_1,\ldots,\alpha_n)$ in $\RR^n$, define
\begin{eqnarray*}
|y|
    & = &
    (|y_1|,\ldots,|y_n|),
    \\
\log y
    & = &
    (\log y_1,\ldots,\log y_n),
    \quad\text{provided that $y_1,\ldots,y_n>0$,}
    \\
y^\alpha
    & = &
    y_{1}^{\alpha_1}\cdots y_{n}^{\alpha_n},
    \quad\text{provided that this is defined,}
    \\
|\alpha|
    & = &
    \alpha_1+\cdots+\alpha_n,
    \\
\supp(\alpha)
    & = &
    \{i\in \{1,\ldots,n\} : \alpha_i\neq 0\},
    \quad\text{which is called the {\bf\emph{support}} of $\alpha$.}
\end{eqnarray*}
\end{notation}

There is a conflict of notation between this use of $|y|$ and $|\alpha|$, but the context will always distinguish the meaning: if $\alpha$ is a tuple of exponents of a tuple of real numbers, then $|\alpha|$ means $\alpha_1+\cdots+\alpha_n$; if $y$ is a tuple of real numbers not used as exponents, then $|y|$ means $(|y_1|,\ldots,|y_n|)$.  These notations may be combined, such as with $|y|^\alpha = |y_1|^{\alpha_1}\cdots|y_n|^{\alpha_n}$ and $(\log|y|)^\alpha = (\log|y_1|)^{\alpha_1}\cdots(\log|y_n|)^{\alpha_n}$.

\begin{definitions}\label{def:subPrepFct}
Consider a subanalytic set $A\subset\RR^{m+n}$.  We say that $A$ is {\bf\emph{open over $\RR^m$}} if $A_x$ is open in $\RR^n$ for all $x\in\Pi_m(A)$.

We call a function $\theta = (\theta_1,\ldots,\theta_n):A\to\RR^n$ a {\bf\emph{center for $A$ over $\RR^m$}} if $A$ is open over $\RR^m$, and if for each $i\in\{1,\ldots,n\}$ the component $\theta_i$ is an analytic subanalytic function $\theta_i:\Pi_{m+i-1}(A)\to\RR$ with the following two properties.
\begin{enumerate}{\setlength{\itemsep}{3pt}
\item
The range of $\theta_i$ is contained in either $(-\infty,0)$, $\{0\}$ or $(0,\infty)$.  And, when $\theta_i$ is nonzero, the closure of the set $\{y_i/\theta_i(x,y_{<i}) : (x,y)\in A\}$ is a compact subset of $(0,\infty)$.

\item
Let $\tld{y}_i = y_i - \theta_i(x,y_{<i})$.  The set $\{\tld{y}_i : (x,y)\in A\}$ is a subset of either $(-\infty,-1)$, $(-1,0)$, $(0,1)$ or $(1,\infty)$.
}\end{enumerate}
We call $(x,\tld{y}) := (x,\tld{y}_1,\ldots,\tld{y}_n)$ the {\bf\emph{coordinates on $A$ with center $\theta$}}.

A {\bf\emph{rational monomial map on $A$ over $\RR^m$ with center $\theta$}} is a bounded function $\varphi:A\to\RR^M$ of the form
\begin{equation}\label{eq:RatMonMap}
\varphi(x,y) = \left(c_1(x)|\tld{y}|^{\gamma_1}, \ldots, c_M(x)|\tld{y}|^{\gamma_M}\right),
\end{equation}
where $c_1,\ldots,c_M$ are positively-valued analytic subanalytic functions on $\Pi_m(A)$ and $\gamma_1,\ldots,\gamma_M$ are tuples in $\QQ^n$.  Note that $\varphi(A)\subset(0,\infty)^M$.  If $A\subset\RR^m\times(0,1)^n$ and $\theta = 0$, we say that $\varphi$ is {\bf\emph{basic}}.

An analytic function is called a {\bf\emph{unit}} if its range is contained in either $(-\infty,0)$ or $(0,\infty)$.  A function $f:A\to\RR$ is called a {\bf\emph{$\varphi$-function}} if $f = F\circ\varphi$ for some analytic function $F$ whose domain is the closure of the range of $\varphi$; if $F$ is also a unit, then we call $f$ a {\bf\emph{$\varphi$-unit}}.\footnote{A $\varphi$-function was called a ``strong function'' by the authors in \cite[Definition 3.3]{CluckersMiller2}, but there we unintentionally gave an incorrect definition that only required $F$ to be defined on the range of $\varphi$, rather than the closure of the range of $\varphi$.  Unlike in \cite{CluckersMiller2}, here we do not require $F$ to be represented by a single convergent power series.}

A function $f:A\to\RR$ is {\bf\emph{$\varphi$-prepared}} if
\[
f(x,y) = g(x)|\tld{y}|^\alpha u(x,y)
\]
on $A$ for some analytic subanalytic function $g$, tuple $\alpha\in\QQ^n$ and $\varphi$-unit $u$.
\end{definitions}

\begin{definition}\label{def:basicRatMonMap}
To any rational monomial map $\varphi:A\to\RR^M$ over $\RR^m$ with center $\theta$, we associate a basic rational monomial map over $\RR^m$, denoted by $\varphi_\theta$, as follows.  For each $i\in\{1,\ldots,n\}$, the set $\{\tld{y}_i : (x,y)\in A\}$ is contained in either $(-\infty,-1)$, $(-1,0)$, $(0,1)$ or $(1,\infty)$, so there exist unique $\varepsilon_i,\zeta_i\in\{-1,1\}$ such that $0 < \varepsilon_i \tld{y}_{i}^{\,\zeta_i} < 1$ for all $(x,y)\in A$.  Define an analytic isomorphism $T_\theta : A \to A_\theta$ by
\[
T_\theta(x,y) = \left(x, \varepsilon_1 \tld{y}_{1}^{\,\zeta_1}, \ldots, \varepsilon_n \tld{y}_{n}^{\,\zeta_n}\right).
\]
Define $\varphi_\theta := \varphi\circ T^{-1}_{\theta} :A_\theta\to\RR^M$.
\end{definition}

\begin{notation}\label{n:Gamma}
Write $\varphi_\theta(x,y) = \left(c_1(x) y^{\gamma_1}, \ldots, c_M(x) y^{\gamma_M}\right)$ for some $\gamma_1,\ldots,\gamma_M\in\QQ^n$. For each $i\in\{0,\ldots,n\}$, define $\varphi_{\theta,i}$ to be the function on $\Pi_{m+i}(A)$ consisting of the components $c_j(x) y^{\gamma_j}$ of $\varphi_\theta$ such that $\supp(\gamma_j)\subset\{1,\ldots,i\}$, and when $i>0$, such that $i\in\supp(\gamma_j)$.  Thus
\[
\varphi_\theta(x,y) = (\varphi_{\theta,0}(x), \varphi_{\theta,1}(x,y_1),\ldots,\varphi_{\theta,n}(x,y_1,\ldots,y_n)).
\]
For each $i\in\{0,\ldots,n\}$ and $\Box\in\{<,\leq, >, \geq\}$, define $\varphi_{\theta,\Box i} = (\varphi_{\theta,j})_{j\Box i}$ on its appropriate domain.  For example, $\varphi_{\theta,\leq i}$ is the function on $\Pi_{m+i}(A)$ given by
\[
\varphi_{\theta,\leq i}(x,y_{\leq i}) = (\varphi_{\theta,0}(x), \varphi_{\theta,1}(x,y_1),\ldots,\varphi_{\theta,i}(x,y_{\leq i})).
\]
\end{notation}

\begin{definition}
If $C\subset\RR^{m+n}$ is a cell over $\RR^m$, then there exists a unique increasing map $\lambda:\{1,\ldots,d\}\to\{1,\ldots,n\}$ whose image consists of the set of all $i\in\{1,\ldots,n\}$ for which $\Pi_{m+i}(C)$ is of the form \eqref{eq:cell}.  We call $C$ a {\bf\emph{$\lambda$-cell}}.
\end{definition}

Note that $\Pi_{m,\lambda}$ defines an analytic isomorphism from a $\lambda$-cell $C$ onto $\Pi_{m,\lambda}(C)$, and $\Pi_{m,\lambda}(C)$ is a cell over $\RR^m$ that is open over $\RR^m$.

\begin{definition}\label{def:RatMonMapPrep}
We say that $\varphi$ is {\bf\emph{prepared over $\RR^m$}} if $A$ is a cell over $\RR^m$ such that for each $i\in\{1,\ldots,n\}$, if we write
\[
\Pi_{m+i}(A_\theta) = \{(x,y_{\leq i}) : (x,y_{<i})\in\Pi_{m+i-1}(A_\theta), a_i(x,y_{<i}) < y_i < b_i(x,y_{<i})\},
\]
then the functions $a_i$, $b_i$ and $b_i-a_i$ are $\varphi_{\theta,<i}$-prepared, and $a_i$ is either identically zero or is strictly positively-valued.
\end{definition}

\begin{proposition}[Subanalytic Preparation]\label{prop:subPrep}
Suppose that $\F$ is a finite set of subanalytic functions on a subanalytic set $D\subset\RR^{m+n}$.  Then there exists a finite partition $\A$ of $D$ into cells over $\RR^m$ such that for each $A\in\A$, if $A$ is a $\lambda$-cell over $\RR^m$ and we write $g:\Pi_{m,\lambda}(A)\to A$ for the inverse of the projection $\Pi_{m,\lambda}\Restr{A}:A\to\Pi_{m,\lambda}(A)$, then there exists a prepared rational monomial map $\varphi:\Pi_{m,\lambda}(A)\to\RR^M$ over $\RR^m$ such that $f\circ g$ is $\varphi$-prepared for each $f\in\F$.
\end{proposition}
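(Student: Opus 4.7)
The plan is to prove the proposition by induction on $n$, using the classical Lion--Rolin preparation theorem of \cite{LR97} as the engine in the inductive step, and building up the nested ``prepared'' structure of the rational monomial map one variable at a time. The case $n=0$ is vacuous: any partition into subanalytic cells (points, really) works with $M=0$. For the inductive step, I would first apply the classical preparation theorem to the functions in $\F$ viewed as subanalytic functions of the last variable $y_n$ over the parameter $x' = (x, y_{<n})$. After refining into finitely many cells over $\RR^{m+n-1}$, each $f \in \F$ admits a representation $f(x',y_n) = h(x')|y_n - \theta_n(x')|^{\alpha_n} U(x', y_n)$, where $\theta_n$ is a subanalytic center, $U$ is a subanalytic unit on a set whose fiber over $x'$ is of the form $\{y_n : \varepsilon (y_n - \theta_n(x'))^\zeta \in \text{a bounded positive interval}\}$, and $\alpha_n \in \QQ$. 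Standard manipulations of the Lion--Rolin proof (or a direct unwinding of the Weierstrass preparation step) yield that such $U$ can be chosen to be an analytic function of finitely many ``elementary'' rational monomials in $|\tld{y}_n| = |y_n - \theta_n|$ with coefficients that are positively-valued subanalytic functions of $x'$; this is the seed of the last slot $\varphi_{\theta,n}$ of the eventual rational monomial map.

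Next I would harvest every subanalytic function of $x'$ that appears in this $y_n$-preparation: the center $\theta_n$, the boundary functions $a_n, b_n, b_n - a_n$ of the cell in $y_n$, the leading coefficient $h(x')$, and all the coefficient functions $c_j(x')$ of the monomials $|\tld{y}_n|^{\gamma_j}$ defining $U$ through $\varphi_{\theta,n}$. I would then apply the inductive hypothesis to this enlarged finite family of subanalytic functions on $\Pi_{m+n-1}(A) \subset \RR^{m+n-1}$, obtaining a finite partition into $\lambda'$-cells over $\RR^m$, and on each such cell a prepared rational monomial map $\varphi'$ in the first $n-1$ variables with respect to which all the harvested functions are $\varphi'$-prepared. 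Pulling this partition back under $\Pi_{m+n-1}$ and intersecting with the cell structure from the first step refines $A$ into cells over $\RR^m$ each of which is a $\lambda$-cell for the appropriate $\lambda$ (obtained from $\lambda'$ by possibly appending the index $n$, depending on whether the fiber in $y_n$ was a graph or an interval).

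To assemble the final rational monomial map $\varphi$ on $\Pi_{m,\lambda}(A)$, I would transport the $y_n$-center data through $\Pi_{m,\lambda}^{-1}$ back to the new base, and concatenate $\varphi'$ with the new last slot $\varphi_{\theta,n}$, where the coefficients $c_j(x')$ in the latter are themselves $\varphi'$-prepared by construction and so can be absorbed into the analytic function defining the unit. Writing $\tld{y}_i = y_i - \theta_i$ for the collective center $\theta = (\theta_1, \ldots, \theta_n)$ (with $\theta_n$ the new one and $\theta_1, \ldots, \theta_{n-1}$ inherited from the inductive step), one checks that the definition of ``prepared'' in Definition~\ref{def:RatMonMapPrep} is met coordinate by coordinate: the cell boundaries $a_n, b_n$ are $\varphi_{\theta,<n}$-prepared because they were in the harvested family and hence $\varphi'$-prepared, and by induction the earlier boundaries $a_i, b_i$ are $\varphi_{\theta,<i}$-prepared. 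Each $f \in \F$ is then $\varphi$-prepared with exponent tuple $(\alpha_1, \ldots, \alpha_{n-1}, \alpha_n)$, where $(\alpha_1, \ldots, \alpha_{n-1})$ comes from $\varphi'$-preparing $h$.

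The main obstacle is the bookkeeping needed to ensure that the unit $U$ from the Lion--Rolin preparation in $y_n$ actually absorbs into a single $\varphi$-unit after the inductive step is applied in the lower-dimensional base. The issue is that the coefficients $c_j(x')$ entering $\varphi_{\theta,n}$ get rewritten as $\varphi'$-prepared expressions $c_j(x') = \tilde h_j(x)|\tld{y}_{<n}|^{\beta_j} v_j(x,y_{<n})$ with $v_j$ a $\varphi'$-unit, so one must verify that substituting these into $U$ produces an analytic function on the closure of the range of the concatenated $\varphi$, rather than something merely defined on the range. This is handled by the standard trick of pulling back through the map $T_\theta$ that rectilinearizes the cell, so that in the basic coordinates all the relevant compositions become analytic on a compact polyhedron; correspondingly, the pre-composed unit $U$ extends analytically to the closure of the range, as required by Definitions~\ref{def:subPrepFct}. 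Subdividing further if necessary to ensure the required sign/interval conditions on $\tld{y}_i$ and on the centers $\theta_i$ completes the construction.
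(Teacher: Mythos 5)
Your proof is correct and is precisely the argument the paper leaves implicit: the paper's proof of Proposition~\ref{prop:subPrep} consists of the single sentence ``This follows from the subanalytic preparation theorem (see \cite{LR97} or \cite{DJMprep}) by induction on $n$,'' and your induction --- preparing in $y_n$ via Lion--Rolin, harvesting the resulting base functions (centers, cell boundaries, leading coefficients, monomial coefficients), applying the inductive hypothesis, and concatenating the rational monomial maps while rectilinearizing via $T_\theta$ to guarantee analyticity on the closure of the range --- is exactly the unwinding that sentence intends. The care you take at the end to get analyticity on the \emph{closure} of the range rather than merely on the range is the right technical point, and it matches the correction the authors themselves flag in their footnote to Definitions~\ref{def:subPrepFct}.
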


\begin{proof}
This follows from the subanalytic preparation theorem (see \cite{LR97} or \cite{DJMprep}) by induction on $n$.
\end{proof}

\begin{corollary}\label{cor:subPrep}
Suppose that $\F$ is a finite set of constructible functions on a subanalytic set $D\subset\RR^{m+n}$.  Then there exists a finite partition $\A$ of $D$ into cells over $\RR^m$ such that for each $A\in\A$ and $f\in\F$, the restriction of $f$ to $A$ is analytic.  Moreover, if each function in $\F$ is subanalytic, then $\A$ can be chosen so that $f(A)$ is contained in either $(-\infty,0)$, $\{0\}$ or $(0,\infty)$ for each $A\in\A$ and $f\in\F$.
\end{corollary}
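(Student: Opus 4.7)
The plan is to bootstrap from Proposition \ref{prop:subPrep}. For each $f\in\F$, fix a generating expression $f = \sum_k g_{f,k}\prod_j \log h_{f,k,j}$ with $g_{f,k}$ and $h_{f,k,j}$ subanalytic and every $h_{f,k,j}$ positively valued, and let $\F'$ be the finite set of all these $g_{f,k}$ and $h_{f,k,j}$. Apply Proposition \ref{prop:subPrep} to $\F'$, obtaining a partition $\A$ of $D$ into cells over $\RR^m$; on each $\lambda$-cell $A\in\A$, write $\bar{g}$ for the inverse of the analytic isomorphism $\Pi_{m,\lambda}|_A$, so that for every $h\in\F'$ the composition $h\circ \bar{g}$ is $\varphi$-prepared on $\Pi_{m,\lambda}(A)$ for a single prepared rational monomial map $\varphi$.

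The second step is to observe that $\varphi$-preparation forces analyticity. Any $\varphi$-prepared function takes the form $g(x)|\tld{y}|^\alpha u(x,y)$, where $g$ is analytic subanalytic; each factor $|\tld{y}_i|^{\alpha_i}$ is analytic because by the definition of center $\tld{y}_i$ stays in a single one of $(-\infty,-1)$, $(-1,0)$, $(0,1)$, or $(1,\infty)$, keeping its absolute value smooth and bounded away from $0$; and the $\varphi$-unit $u = U\circ \varphi$ is analytic because $U$ is analytic on the closure of the range of $\varphi$ and $\varphi$ is itself built from analytic functions. Hence each $h\circ \bar{g}$ is analytic on $\Pi_{m,\lambda}(A)$, and since $\bar{g}$ is an analytic isomorphism, each $h\in \F'$ is analytic on $A$. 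Since each $h_{f,k,j}|_A$ is positively valued and analytic, $\log h_{f,k,j}|_A$ is analytic, and finite sums and products of analytic functions are analytic, so $f|_A$ is analytic for every $f\in \F$.

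For the \emph{moreover} assertion, assume every $f\in \F$ is subanalytic and apply Proposition \ref{prop:subPrep} directly to $\F$. On each resulting $\lambda$-cell $A$ we have $f\circ \bar{g} = h_f(x)|\tld{y}|^{\alpha_f} u_f(x,y)$; the middle factor is strictly positive and the unit $u_f$, being an analytic function with range in either $(-\infty,0)$ or $(0,\infty)$, has constant sign, so the sign of $f$ on $A$ is governed entirely by the sign of the analytic subanalytic function $h_f$ on $\Pi_m(A)$. Invoking o-minimal cell decomposition for the finite family $\{h_f\}_{f\in \F}$ on $\Pi_m(A)$, we subdivide $\Pi_m(A)$ into subanalytic cells on which every $h_f$ is of constant sign (positive, zero, or negative), then restrict $A$ over each such base cell to obtain the required refinement of $\A$.

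The main obstacle is combinatorial rather than conceptual: one must check that restricting a $\lambda$-cell $A\in\A$ over a cell refinement of $\Pi_m(A)$ again yields a finite family of cells over $\RR^m$ of the form \eqref{eq:cell}, which follows by induction on $n$ and a careful tracking of which projections $\Pi_{m+i}$ are graph-type and which are band-type. A minor secondary point is that the conclusion in the constructible case does not depend on the particular representation $f = \sum_k g_{f,k}\prod_j \log h_{f,k,j}$ we chose, which is automatic because analyticity of $f|_A$ is intrinsic.
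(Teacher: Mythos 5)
Your proof is correct and follows the same route as the paper's (which compresses the whole thing into three lines): reduce the constructible case to the subanalytic case via the generators $\F'$, apply Proposition~\ref{prop:subPrep}, and read off analyticity and constant sign from the prepared form $g(x)|\tld{y}|^\alpha u(x,y)$ after a refinement of the base in the $x$-variables. One small slip in your justification of analyticity: $|\tld{y}_i|$ is \emph{not} bounded away from zero in general --- the set $\{\tld{y}_i : (x,y)\in A\}$ may lie in $(0,1)$ or $(-1,0)$ and approach $0$ --- but what you actually need, and have, is only that $\tld{y}_i$ has constant nonzero sign on $A$, so $|\tld{y}_i|^{\alpha_i}=(\pm\tld{y}_i)^{\alpha_i}$ extends analytically to the open set $\{\pm\tld{y}_i>0\}$ containing $\Pi_{m,\lambda}(A)$. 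Your closing worry about base refinement is also unnecessary: the definition of a cell over $\RR^m$ imposes no condition on $\Pi_m(A)$ itself, so $A\cap\Pi_m^{-1}(B)$ is automatically a cell over $\RR^m$ with the same restricted graph and band functions, for any subanalytic $B\subset\Pi_m(A)$, with no induction required.
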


\begin{proof}
When $\F$ consists entirely of subanalytic functions, this follows directly from Proposition \ref{prop:subPrep}.  In the general constructible case, fix a finite set $\F'$ of subanalytic functions such that each function in $\F$ is a sum of products of functions of the form $(x,y)\mapsto f(x,y)$ and $(x,y)\mapsto \log g(x,y)$ with $f,g\in\F'$.  Now apply the result of the subanalytic case to $\F'$.
\end{proof}

\begin{definition}\label{def:compatible}
If $\S$ is a set of subsets of a set $X$, we say that a partition $\A$ of $X$ is {\bf\emph{compatible}} with $\S$ if for each $A\in\A$ and each $S\in\S$, either $A\subset S$ or $A\subset X\setminus S$.
\end{definition}

Note that in Proposition \ref{prop:subPrep} and Corollary \ref{cor:subPrep}, the partition $\A$ can be made to be compatible with any prior given finite set of subanalytic subsets of $D$.

\section{Consequences of the Theorem on Diagrams of Lebesgue Classes}
\label{s:cor}

Throughout this section we use the notation of Theorem \ref{thm:LC}.

\subsection*{Corollaries of the Theorem on Diagrams of Lebesgue Classes}

\begin{corollary}\label{cor:LC=I}
For each $I\in\I$,
\begin{equation}\label{eq:LC=I}
\{x\in E : \LC(f,|\mu|^q,x) = I\} = \left\{x\in E : (g_I(x) = 0)\wedge
\left(\bigwedge_{J\in\I_I} g_J(x)\neq 0\right)\right\},
\end{equation}
where $\I_I = \{J\in\I : I\subsetneq J\}$.
\end{corollary}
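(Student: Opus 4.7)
The plan is to use Theorem \ref{thm:LC} as a black box together with the elementary observation made just before the theorem, namely that $\LC(f,|\mu|^q,x)$ is itself a subinterval of $(0,\infty]$, so by the very definition of $\I$ we have $\LC(f,|\mu|^q,x) \in \I$ for every $x \in E$. Call this interval $I^*(x)$. Then proving $\LC(f,|\mu|^q,x) = I$ amounts to showing $I^*(x) = I$ in $\I$, and this can be pinned down by combining the equality $g_I(x) = 0$ (which, by Theorem \ref{thm:LC}, is equivalent to $I \subset I^*(x)$) with maximality of $I$ among the elements of $\I$ containing itself at $x$.

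Concretely, the proof is a two-direction argument. For the forward inclusion, I assume $I^*(x) = I$. Then $I \subset I^*(x)$, so by \eqref{eq:LC} we get $g_I(x) = 0$. Now for any $J \in \I_I$, i.e.\ $I \subsetneq J$, if it were the case that $g_J(x) = 0$, then \eqref{eq:LC} applied to $J$ would give $J \subset I^*(x) = I$, contradicting $I \subsetneq J$; hence $g_J(x) \neq 0$ for every $J \in \I_I$, establishing containment in the right-hand side of \eqref{eq:LC=I}.

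For the reverse inclusion, suppose $g_I(x) = 0$ and $g_J(x) \neq 0$ for every $J \in \I_I$. From $g_I(x) = 0$ and \eqref{eq:LC} we get $I \subset I^*(x)$. Since $I^*(x) \in \I$, either $I^*(x) = I$ (which is what we want) or $I \subsetneq I^*(x)$, so that $I^*(x) \in \I_I$. In the latter case, applying \eqref{eq:LC} to $J := I^*(x)$ gives $g_{I^*(x)}(x) = 0$ because trivially $I^*(x) \subset I^*(x)$, contradicting the hypothesis $g_J(x) \neq 0$ for all $J \in \I_I$. Hence $I^*(x) = I$, completing the proof.

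Since the argument is essentially a tautological unpacking of Theorem \ref{thm:LC} plus the maximality observation, I do not anticipate any real obstacle; the one thing to be careful about is the use of the fact that $\I$ is defined as the set of realized Lebesgue-class intervals (not an a priori larger collection), which is exactly what guarantees $I^*(x) \in \I$ and makes the maximality step go through. No additional constructible function beyond the $g_I$ already provided by Theorem \ref{thm:LC} needs to be introduced, and the finiteness of $\I$ ensures the conjunction on the right-hand side of \eqref{eq:LC=I} is a finite boolean combination.
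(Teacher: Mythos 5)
Your proof is correct and takes essentially the same approach as the paper's, which condenses the argument into the single observation that $\LC(f,|\mu|^q,x) = I$ if and only if $I\subset\LC(f,|\mu|^q,x)$ and $J\not\subset\LC(f,|\mu|^q,x)$ for all $J\in\I_I$, then applies \eqref{eq:LC}. You merely spell out explicitly the maximality step (using the fact that $\LC(f,|\mu|^q,x)\in\I$) that the paper leaves implicit.
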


\begin{proof}
This follows from \eqref{eq:LC} and from the fact that for each $x\in E$, $\LC(f,\mu,x) = I$ if and only if $I\subset\LC(f,\mu,x)$ and $J\not\subset\LC(f,\mu,x)$ for all $J\in\I_I$.
\end{proof}

The final sentence of Theorem \ref{thm:LC} shows that when $f$ is subanalytic, so is the set \eqref{eq:LC=I}.

\begin{remark}\label{rmk:LCunion}
The set $\LC(f,|\mu|^q,E)$ can be expressed as the disjoint union
\begin{equation}\label{eq:LCdisjointUnion}
\bigcup_{I\in\I}\left(\{x\in E : \LC(f,|\mu|^q,x) = I\} \times I\right)
\end{equation}
and as the (not necessarily disjoint) union
\begin{equation}\label{eq:LCunion}
\bigcup_{I\in\I}\left(\{x\in E : I\subset\LC(f,|\mu|^q,x)\} \times I\right).
\end{equation}
\end{remark}

\begin{proof}
The fact that $\LC(f,|\mu|^q,E)$ equals \eqref{eq:LCdisjointUnion}, and that \eqref{eq:LCdisjointUnion} is contained in \eqref{eq:LCunion}, are both clear.  To see that \eqref{eq:LCunion} is contained in \eqref{eq:LCdisjointUnion}, note that if $(x,p)$ is such that $I\subset\LC(f,|\mu|^q,x)$ and $p\in I$, then $J = \LC(f,|\mu|^q,x)$ and $p\in J$ for some $J\in\I$ with $I\subset J$.
\end{proof}

Observe that \eqref{eq:LC=I} and \eqref{eq:LC} show how to use the functions $\{g_I\}_{I\in\I}$ to define the sets occurring in \eqref{eq:LCdisjointUnion} and \eqref{eq:LCunion}.

\begin{corollary}\label{cor:G_P}
For each $P\subset(0,\infty]$ there exists $G_P\in\C(E)$ such that
\begin{equation}\label{eq:G_P}
\{x\in E : P\subset\LC(f,|\mu|^q,x)\} = \{x\in E : G_P(x) = 0\}.
\end{equation}
\end{corollary}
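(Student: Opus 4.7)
The plan is to derive this directly from Theorem \ref{thm:LC} by reducing the condition ``$P\subset\LC(f,|\mu|^q,x)$'' to a finite disjunction of conditions already handled by that theorem. The key observation is that every fiber $\LC(f,|\mu|^q,x)$ lies in the finite set $\I$, so if we set $\I_P := \{I\in\I : P\subset I\}$, then $P\subset\LC(f,|\mu|^q,x)$ if and only if $\LC(f,|\mu|^q,x)\in\I_P$. This in turn is equivalent to the existence of some $I\in\I_P$ with $I\subset\LC(f,|\mu|^q,x)$: one direction takes $I := \LC(f,|\mu|^q,x)$, which belongs to $\I_P$ by assumption; the other is immediate from $P\subset I\subset\LC(f,|\mu|^q,x)$.

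With this equivalence in hand, I would rewrite
\[
\{x\in E : P\subset\LC(f,|\mu|^q,x)\} = \bigcup_{I\in\I_P}\{x\in E : I\subset\LC(f,|\mu|^q,x)\}.
\]
By Theorem \ref{thm:LC}, each set on the right is the zero locus in $E$ of some $g_I\in\C(E)$. Since $\I_P\subset\I$ is finite and a finite union of zero loci is the zero locus of the product, I would define
\[
G_P(x) := \prod_{I\in\I_P} g_I(x),
\]
with the empty product interpreted as the constant function $1$. Then $G_P\in\C(E)$ and $\{x\in E : G_P(x) = 0\}$ is exactly the desired set. The corner cases check out: if $\I_P = \emptyset$ then no fiber $\LC(f,|\mu|^q,x)$ contains $P$, so the left-hand side is empty, in agreement with $G_P \equiv 1$; if $P = \emptyset$ then $\I_P = \I$ and for every $x\in E$ the factor $g_{\LC(f,|\mu|^q,x)}$ vanishes at $x$, so $G_P$ vanishes identically, as required.

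There is no real obstacle beyond this bookkeeping: the entire content is packaged in Theorem \ref{thm:LC}, and the corollary exploits only the finiteness of $\I$ together with the elementary closure properties of $\C(E)$ under finite products.
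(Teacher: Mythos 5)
Your proof is correct and takes essentially the same approach as the paper: both define $G_P$ as the product of the $g_I$ over $I\in\I$ with $P\subset I$, justified by the observation that $P\subset\LC(f,|\mu|^q,x)$ holds precisely when $\LC(f,|\mu|^q,x)$ itself belongs to $\I_P$ (equivalently, when some $I\in\I_P$ satisfies $I\subset\LC(f,|\mu|^q,x)$), after which \eqref{eq:LC} and closure of zero loci under finite products finish the argument.
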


\begin{proof}
Define $G_P$ to be the product of the $g_I$ for all $I\in\I$ with $P\subset I$.  Then \eqref{eq:G_P} follows from \eqref{eq:LC} and from the fact that for each $x\in E$, we have $P\subset\LC(f,|\mu|^q,x)$ if and only if $\LC(f,|\mu|^q,x) = I$ for some $I\in\I$ with $P\subset I$.
\end{proof}

For each $p\in(0,\infty]$, taking $P = \{p\}$ in \eqref{eq:G_P} shows that $\Int^p(f,|\mu|^q,E)$ is the zero locus of a constructible function.  A very elementary proof of this fact is given in \cite{CluckersMiller2} for the special case when $\mu=1$, $p=1$ and $n=1$.

\begin{corollary}\label{cor:IntpFinite}
The set $\{\Int^p(f,|\mu|^q,E) : p\in(0,\infty]\}$ is finite.
\end{corollary}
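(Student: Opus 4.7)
The plan is to deduce the corollary directly from the finiteness of $\I$ established in Theorem \ref{thm:LC}. The key observation is that $\Int^p(f,|\mu|^q,E)$ depends on $p$ only through the finite combinatorial datum ``which intervals of $\I$ contain $p$.''

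First I would rewrite the $L^p$-locus in terms of $\LC$: for each $p\in(0,\infty]$,
\[
\Int^p(f,|\mu|^q,E) = \{x\in E : p\in \LC(f,|\mu|^q,x)\}.
\]
Since for each $x\in E$ we have $\LC(f,|\mu|^q,x)\in\I$, this equals
\[
\bigcup_{I\in\I,\ p\in I}\{x\in E : I\subset\LC(f,|\mu|^q,x)\},
\]
which by Theorem \ref{thm:LC} (equation \eqref{eq:LC}) is a union of finitely many zero loci of constructible functions, namely
\[
\Int^p(f,|\mu|^q,E)=\bigcup_{I\in\I,\ p\in I}\{x\in E : g_I(x)=0\}.
\]

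Next, since $\I$ is finite, the collection of subsets $\{I\in\I : p\in I\}$ as $p$ ranges over $(0,\infty]$ has at most $2^{|\I|}$ distinct values. Because the union above depends on $p$ only through this finite datum, the collection $\{\Int^p(f,|\mu|^q,E) : p\in(0,\infty]\}$ has at most $2^{|\I|}$ elements, proving the corollary. There is no real obstacle here; the work was already done in Theorem \ref{thm:LC}, and this corollary merely repackages the finiteness of $\I$ into a statement about $L^p$-loci.
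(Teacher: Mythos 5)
Your proof is correct and takes essentially the same route as the paper: both arguments reduce the corollary to the observation that $\Int^p(f,|\mu|^q,E)$ depends on $p$ only through the finite combinatorial datum of which intervals in $\I$ contain $p$. The paper packages this slightly more cleanly by fixing a finite partition $\J$ of $(0,\infty]$ compatible with $\I$ (so that $\Int^p(f,|\mu|^q,E)=\{x\in E : J\subset\LC(f,|\mu|^q,x)\}$ for the unique $J\in\J$ containing $p$, with no union needed), but this is the same idea indexed differently.
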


\begin{proof}
Since $\I$ is finite by Theorem \ref{thm:LC}, we may fix a finite partition $\J$ of $(0,\infty]$ compatible with $\I$.  If $J\in\J$ and $p\in J$, then for each $I\in\I$, $p\in I$ if and only if $J\subset I$; so $\Int^p(f,|\mu|^q,E) = \{x\in E : J\subset\LC(f,|\mu|^q,x)\}$.  Therefore
\[
\{\Int^p(f,|\mu|^q,E) : p\in(0,\infty]\} = \{\{x\in E : J\subset\LC(f,|\mu|^q,x)\} : J\in\J\},
\]
which is finite because $\J$ is finite.
\end{proof}

\begin{corollary}\label{cor:bdd}
There exists $g\in\C(E)$ such that
\[
\{x\in E : \text{$f(x,\cdot)$ is bounded on $D_x$}\} = \{x\in E  : g(x) = 0\}.
\]
\end{corollary}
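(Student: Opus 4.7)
The plan is to reduce to Corollary \ref{cor:G_P} with $P=\{\infty\}$, which describes the $L^\infty$-locus as the zero locus of a constructible function. The essential subtlety is that pointwise boundedness of $f(x,\cdot)$ on $D_x$ is strictly stronger than essential boundedness (i.e., membership in $L^\infty$), since $f(x,\cdot)$ is allowed to behave arbitrarily on a Lebesgue-null subset. To bridge the two notions we need fibers that are open in a Euclidean space together with a continuous representative of $f$.

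First I would apply Corollary \ref{cor:subPrep} to obtain a finite partition $\A$ of $D$ into cells over $\RR^m$ with $f|_A$ analytic for each $A\in\A$. For each $A\in\A$, let $\lambda_A:\{1,\ldots,d_A\}\to\{1,\ldots,n\}$ be its index map, so that $\Pi_{m,\lambda_A}|_A:A\to B_A:=\Pi_{m,\lambda_A}(A)$ is an analytic isomorphism over $\RR^m$ and $B_A\subset\RR^{m+d_A}$ is open over $\RR^m$. Set $\tilde f_A:=f|_A\circ(\Pi_{m,\lambda_A}|_A)^{-1}\in\C(B_A)$; this is analytic, hence continuous on the open fiber $(B_A)_x\subset\RR^{d_A}$ for every $x\in\Pi_m(A)$. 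Continuity on an open set forces the essential supremum with respect to Lebesgue measure to equal the supremum, so
\[
f(x,\cdot)\text{ is bounded on }A_x\iff\tilde f_A(x,\cdot)\in L^\infty((B_A)_x).
\]
Corollary \ref{cor:G_P} applied to $\tilde f_A$ on $B_A$ with $\mu=1$, $q=1$, $P=\{\infty\}$ then produces $h_A\in\C(\Pi_m(A))$ whose zero locus in $\Pi_m(A)$ is $\{x\in\Pi_m(A):f(x,\cdot)\text{ is bounded on }A_x\}$.

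To assemble these into a single constructible function on $E$, note that $\Pi_m(A)$ is subanalytic and therefore so is its characteristic function $\chi_{\Pi_m(A)}$: extending the subanalytic building blocks of $h_A$ subanalytically from $\Pi_m(A)$ to $E$ (extending subanalytic functions by $0$ and positive subanalytic functions by $1$) and then multiplying by $\chi_{\Pi_m(A)}$ produces $\tilde h_A\in\C(E)$ equal to $h_A$ on $\Pi_m(A)$ and to zero off $\Pi_m(A)$. Finally, set
\[
g:=\sum_{A\in\A}\tilde h_A^{\,2}\in\C(E).
\]
Since $D_x=\bigsqcup_{A\in\A}A_x$, the function $f(x,\cdot)$ is bounded on $D_x$ iff it is bounded on each nonempty $A_x$, iff $\tilde h_A(x)=0$ for every $A\in\A$, iff $g(x)=0$. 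The main obstacle, as indicated, is the gap between pointwise and essential boundedness, and the cell decomposition combined with the coordinate change along $\Pi_{m,\lambda_A}$ is precisely what closes this gap by presenting $f$ as a continuous function on fibers that are open in Euclidean space.
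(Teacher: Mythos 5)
Your proof is correct and follows essentially the same route as the paper: apply Corollary \ref{cor:subPrep} to decompose $D$ into cells on which $f$ is analytic, project via $\Pi_{m,\lambda_A}$ to make the fibers open in Euclidean space so that boundedness coincides with $L^\infty$-membership, and invoke Corollary \ref{cor:G_P} with $P=\{\infty\}$. You merely spell out the assembly step (extension by zero and sum of squares) that the paper compresses into the remark that zero loci of constructible functions are closed under intersections and unions.
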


\begin{proof}
Zero loci of constructible functions are closed under intersections and unions (by taking sums of squares and by taking products, respectively), so we may assume by Corollary \ref{cor:subPrep} that $D$ is a cell over $\RR^m$ and that $f$ is analytic.  By projecting into a lower dimensional space, we may further assume that $D$ is open over $\RR^m$.  Thus $f(x,\cdot)$ is bounded on $D_x$ if and only if it is in $L^\infty(D_x)$, so we are done by applying Corollary \ref{cor:G_P} with $P=\{\infty\}$.
\end{proof}

Although we will use Theorem \ref{thm:vanish} to prove Theorem \ref{thm:LC}, it is interesting to observe that, conversely, Theorem \ref{thm:vanish} also follows from Theorem \ref{thm:LC}, as follows.

\begin{corollary}\label{cor:idenZero}
There exist $g,h\in\C(E)$ such that
\[
\{x\in E : \text{$f(x,y) = 0$ for all $y\in D_x$}\}
=
\{x\in E  : g(x) = 0\}
\]
and
\[
\{x\in E : \text{$f(x,y) = 0$ for $|\mu|_x$-almost all $y\in D_x$}\}
=
\{x\in E  : h(x) = 0\}.
\]
\end{corollary}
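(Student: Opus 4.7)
The plan is to prove the second (``$|\mu|_x$-a.e.\ vanishing'') statement first, as an essentially immediate application of Theorem \ref{thm:LC}, and then to deduce the first (``identical vanishing'') statement by a cell decomposition that reduces matters to fibers on which a.e.\ vanishing and identical vanishing coincide.

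For $h$: I would introduce the constructible function $F(x,y,t):=f(x,y)\mu(x,y)$ on $\tld{D}:=D\times\RR\subset\RR^{m+n+1}$, and apply Corollary \ref{cor:G_P} to $F$ (with Lebesgue measure, $q=1$, and $P=\{1\}$; note that $\Pi_m(\tld{D})=E$) to obtain $h\in\C(E)$ satisfying $\{x\in E:F(x,\cdot,\cdot)\in L^1(\tld{D}_x)\}=\{x\in E:h(x)=0\}$. By Tonelli's theorem,
\[
\int_{\tld{D}_x}|F|\,dy\,dt=\int_\RR\Bigl(\int_{D_x}|f\mu|\,dy\Bigr)dt,
\]
which is finite iff $\int_{D_x}|f\mu|\,dy=0$, iff $f\mu=0$ Lebesgue-a.e.\ on $D_x$, iff $f(x,y)=0$ for $|\mu|_x$-a.e.\ $y\in D_x$. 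Thus this $h$ works.

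For $g$: By Corollary \ref{cor:subPrep}, partition $D$ into finitely many cells $A_1,\ldots,A_N$ over $\RR^m$ on which $f$ is analytic. If $A_j$ is a $\lambda_j$-cell for an increasing map $\lambda_j\colon\{1,\ldots,d_j\}\to\{1,\ldots,n\}$, then $\Pi_{m,\lambda_j}$ is an analytic isomorphism of $A_j$ onto a cell $A_j':=\Pi_{m,\lambda_j}(A_j)\subset\RR^{m+d_j}$ that is open over $\RR^m$, and $f_j:=f\circ\Pi_{m,\lambda_j}^{-1}$ is analytic on $A_j'$. Since each fiber $A_{j,x}'$ is a connected open subset of $\RR^{d_j}$, analyticity of $f_j$ forces ``$f_j=0$ Lebesgue-a.e.\ on $A_{j,x}'$'' to coincide with ``$f_j\equiv0$ on $A_{j,x}'$'', which in turn is the same as ``$f\equiv0$ on $A_{j,x}$''. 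Applying the already-established $|\mu|_x$-a.e.\ statement (with $\mu=1$) to $f_j$ on $A_j'$ yields $h_j\in\C(\Pi_m(A_j))$ with $\{x\in\Pi_m(A_j):h_j(x)=0\}=\{x\in\Pi_m(A_j):f\equiv0\text{ on }A_{j,x}\}$.

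To finish, extend each $h_j$ by zero to $E$ and combine: the characteristic function $\chi_{\Pi_m(A_j)}$ is subanalytic (its graph is a subanalytic subset of $E\times\RR$), and $h_j$ admits a constructible extension $\tld{h}_j\in\C(E)$ obtained by extending its subanalytic building blocks to $E$ (e.g.\ replacing positive subanalytic $\log$-arguments by $1$ outside $\Pi_m(A_j)$ so the $\log$'s stay well-defined). Then $\hat{h}_j:=\tld{h}_j\cdot\chi_{\Pi_m(A_j)}\in\C(E)$ satisfies
\[
\{\hat{h}_j=0\}=(E\setminus\Pi_m(A_j))\cup\{x\in\Pi_m(A_j):h_j(x)=0\}=\{x\in E:f\equiv0\text{ on }A_{j,x}\},
\]
and since $D=\bigsqcup_jA_j$, the function $g:=\sum_{j=1}^N\hat{h}_j^2\in\C(E)$ has the required zero locus. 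The main technical point is really just this extension-by-zero step; the rest is a straightforward combination of Theorem \ref{thm:LC} with standard cell decomposition.
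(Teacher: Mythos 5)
Your proof is correct, but it follows a genuinely different and more elaborate route than the paper's, which is a one-line reduction. The paper introduces a single auxiliary function $F(x,y,z)=zf(x,y)$ on $D\times\RR$, observes that pointwise vanishing of $f(x,\cdot)$ on $D_x$ is equivalent to boundedness of $F(x,\cdot,\cdot)$ on $D_x\times\RR$, and that $|\mu|_x$-a.e.\ vanishing of $f(x,\cdot)$ is equivalent to $F(x,\cdot,\cdot)\in L^\infty(\nu_x)$ with $\nu(x,y,z)=|\mu(x,y)|$; it then directly invokes Corollary \ref{cor:bdd} for $g$ and Corollary \ref{cor:G_P} with $P=\{\infty\}$ for $h$. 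Your treatment of the a.e.\ statement uses the auxiliary function $f\mu$ on the cylinder $D\times\RR$ together with the $L^1$ condition---a different but equally valid way of thickening the domain to infinite measure so that an integrability condition forces vanishing; your Tonelli computation is correct, since the integrand is independent of $t$ and the $t$-integration makes the integral $0$ or $\infty$ according to whether $\int_{D_x}|f\mu|\,dy$ vanishes. Where you diverge most is the pointwise statement: rather than reducing it to boundedness, you run a cell decomposition via Corollary \ref{cor:subPrep}, project each $\lambda_j$-cell to the open cell $A_j'$, use analyticity and connectedness of the fibers to identify a.e.\ vanishing with identical vanishing there, apply your a.e.\ result with $\mu=1$, and glue with an extension-by-zero of the $h_j$'s (the extension and the characteristic-function multiplication are both fine, since the building-block extensions remain subanalytic). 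This all works and is carefully argued, but it essentially reproduces the structure of the paper's direct, elementary proof of Theorem \ref{thm:vanish} in Section \ref{s:vanish}, whereas the intended point of Corollary \ref{cor:idenZero} is precisely to show that the vanishing statements drop out of the $L^\infty$-machinery of Theorem \ref{thm:LC} with almost no extra work; the paper's $zf(x,y)$ trick accomplishes both statements at once and is noticeably shorter.
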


\begin{proof}
Define $F:D\times\RR\to\RR$ by $F(x,y,z) = z f(x,y)$.  Note that for each $x\in E$, $f(x,y) = 0$ for all $y\in D_x$ if and only if $(y,z)\mapsto F(x,y,z)$ is bounded on $D_x\times\RR$, and that $f(x,y) = 0$ for $|\mu|_x$-almost all $y\in D_x$ if and only if $(y,z)\mapsto F(x,y,z)$ is in $L^\infty(\nu_x)$, where $\nu:D\times\RR\to[0,\infty)$ is defined by $\nu(x,y,z) = |\mu(x,y)|$.  So we are done by applying Corollaries \ref{cor:bdd} and \ref{cor:G_P} (with $P = \{\infty\}$) to $F$.
\end{proof}

The following result generalizes \cite[Theorem $1.4'$]{CluckersMiller1}.

\begin{corollary}\label{cor:dense}
Let $q > 0$, $P\subset(0,\infty]$, and $F,\nu\in\C(X\times Y\times\RR^k)$ for some subanalytic sets $X$ and $Y$.  Suppose that for each $x\in X$, the set $\{y\in Y : P\subset\LC(F,|\nu|^q,(x,y))\}$ is dense in $Y$.  Then there exists a subanalytic set $C\subset X\times Y$ such that $C\times P\subset\LC(F,|\nu|^q,X\times Y)$ and $C_x$ is dense in $Y$ for each $x\in X$.
\end{corollary}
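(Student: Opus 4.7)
The plan is to reduce the claim to a geometric statement about the zero locus of a single constructible function via Corollary~\ref{cor:G_P}, and then to extract a subanalytic subset of that locus by working on a cell decomposition on which the function is analytic. Concretely, apply Corollary~\ref{cor:G_P} with parameters $(x,y)\in X\times Y$ and integration variables in $\RR^k$ to produce $G_P\in\C(X\times Y)$ satisfying
\[
S := \{(x,y)\in X\times Y : P\subset\LC(F,|\nu|^q,(x,y))\} = \{(x,y)\in X\times Y : G_P(x,y)=0\};
\]
the hypothesis becomes that $S_x$ is dense in $Y$ for every $x\in X$, and any subanalytic $C\subset S$ automatically satisfies $C\times P\subset\LC(F,|\nu|^q,X\times Y)$. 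After a standard reduction via a subanalytic cell decomposition of $Y$ and subanalytic charts identifying each top-dimensional piece of $Y$ with an open subset of $\RR^{\dim Y}$, it suffices to treat the case in which $Y$ is open in $\RR^n$.

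Next, applying Corollary~\ref{cor:subPrep} to $\{G_P\}$ yields a finite partition $\A$ of $X\times Y$ into cells over $\RR^m$ with $G_P|_A$ analytic for each $A\in\A$; cells in the sense of this paper have connected fibers $A_x$, as each fiber is an iterated cylinder built from graphs and open intervals. Set
\[
C := \bigcup\{A\in\A : A\text{ is open over }\RR^m\},
\]
a finite union of subanalytic cells and hence subanalytic. The crux is to verify $C\subset S$. Fix such an $A$ and $x\in\Pi_m(A)$: then $A_x$ is a connected open subset of $Y$ and $G_P(x,\cdot)$ is analytic, in particular continuous, on $A_x$, so the zero set $\{y\in A_x : G_P(x,y)=0\}=S_x\cap A_x$ is closed in $A_x$. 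Density of $S_x$ in $Y$ combined with openness of $A_x$ in $Y$ makes this zero set dense in $A_x$; a closed dense subset of $A_x$ equals $A_x$, so $G_P(x,\cdot)\equiv 0$ on $A_x$. Letting $x$ range over $\Pi_m(A)$ forces $G_P|_A\equiv 0$, and hence $A\subset S$.

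To conclude, the family $\{A_x : A\in\A,\ x\in\Pi_m(A)\}$ partitions $Y$; cells not open over $\RR^m$ have $\dim A_x<n=\dim Y$, so their fibers are nowhere dense in $Y$, while $C_x$ is precisely the union of the remaining (open) fibers, hence $C_x$ is open and dense in $Y$ for each $x\in X$. The main obstacle throughout is that the zero locus of a constructible function is generally not subanalytic, so the naive choice $C=S$ is not available; the decisive move is to pair Corollary~\ref{cor:subPrep} with the elementary fact that a closed dense subset of a topological space equals that space, which upgrades the pointwise fiberwise density hypothesis into the identical vanishing of $G_P$ on each top-dimensional cell and thereby furnishes a subanalytic witness contained in $S$ automatically.
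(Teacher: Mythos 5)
Your proposal uses the paper's strategy exactly: reduce to a Euclidean fiber, apply Corollary~\ref{cor:G_P} to obtain a constructible $G_P$ whose zero locus is $S$, apply Corollary~\ref{cor:subPrep} to render $G_P$ analytic on a finite family of cells over $\RR^m$, take $C$ to be the union of those cells that are open over $\RR^m$, and show that $G_P$ vanishes identically on $C$. The only genuine variation is in the last inference: the paper argues that a proper analytic subset of the open fiber $C_a$ cannot be dense, whereas you observe that the zero set of the continuous function $G_P(x,\cdot)$ on the open fiber $A_x$ is closed and dense, hence equals $A_x$. Your version relies on continuity alone and does not need connectedness of $A_x$, so it is marginally more elementary than the paper's; otherwise the arguments are the same.

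The one place where your sketch is incomplete is the reduction to $Y$ open in $\RR^n$. You decompose $Y$ into cells and push only the top-dimensional pieces to open subsets of $\RR^{\dim Y}$, but a subanalytic $Y$ can have points where its local dimension is strictly smaller than $\dim Y$ (for instance a two-dimensional square with a one-dimensional arc attached at a boundary point). Density of $S_x$ in $Y$ imposes a condition along such lower-dimensional pieces, and density of $C_x$ in $Y$ then requires $C_x$ to approach them too, so $C$ cannot be confined to the top-dimensional strata. A correct reduction must treat cells of every dimension $d\le\dim Y$ --- the paper does this by citing the second paragraph of the proof of Theorem~$1.4'$ in~\cite{CluckersMiller1} --- for example by restricting attention to the cells that are open in $Y$ (whose union is open and dense in $Y$) and identifying each with an open subset of $\RR^d$ for the appropriate $d$. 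This is a repairable defect in the reduction only; the core argument for $Y$ open in $\RR^n$ is correct.
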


\begin{proof}
Assume that $X\subset\RR^m$.  We may assume that $Y = \RR^n$ because the case of a general subanalytic set $Y$ follows from this special case by arguing as in the second paragraph of the proof of \cite[Theorem $1.4'$]{CluckersMiller1}.  By Corollary \ref{cor:G_P} we may fix $g\in\C(X\times\RR^n)$ such that
\begin{equation}\label{eq:LC_P}
\{(x,y)\in X\times\RR^n : P\subset\LC(F,|\nu|^q,(x,y))\}
=
\{(x,y)\in X\times\RR^n : g(x,y) = 0\}.
\end{equation}
By Corollary \ref{cor:subPrep} we may fix a partition $\A$ of $X\times\RR^n$ into subanalytic cells over $\RR^m$ such that $g$ restricts to an analytic function on each $A\in\A$.  Let $C$ be the union of the members of $\A$ that are open over $\RR^m$. Then $C$ is subanalytic, $\Pi_m(C) = X$, and $C_x$ is open and dense in $\RR^n$ for each $x\in X$.  If there exists $(a,b)\in C$ such that $g(a,b)\neq 0$, then $\{y\in C_a : g(a,y) = 0\}$ would be a proper analytic subset of the open set $C_a$, so $\{y\in\RR^n : g(a,y) = 0\}$ would not be dense in $\RR^n$, contradicting \eqref{eq:LC_P} and our assumption on $F$ and $|\nu|^q$.  Therefore $g(x,y) = 0$ for all $(x,y)\in C$, which by \eqref{eq:LC_P} proves the corollary.
\end{proof}

\subsection*{Variants of the Theorem on Diagrams of Lebesgue Classes}
We now show how Theorem \ref{thm:LC} adapts easily to the study of local integrability, complex measures, and measures defined from constructible differential forms on subanalytic sets.  We only discuss the analogs of Theorem \ref{thm:LC} itself, but it follows that analogs of the previous list of corollaries of this theorem hold as well, via the same proofs.

Suppose that $Y\subset\RR^n$ and $f:Y\to\RR$ are Lebesgue measurable, that $\nu$ is a positive measure on $Y$ that is absolutely continuous with respect to the $n$-dimensional Lebesgue measure, and that $p\in(0,\infty]$.  We say that $f$ is {\bf\emph{locally in $L^p(\nu)$}}, written as $f\in L^{p}_{\loc}(\nu)$, if for each $y\in Y$ there exists a neighborhood $U$ of $y$ in $Y$ such that $f\Restr{U}$ is in $L^{p}(\nu\Restr{U})$.  Similarly, we say that $f$ is {\bf\emph{locally bounded on $Y$}} if for each $y\in Y$ there exists a neighborhood $U$ of $y$ in $Y$ such that $f(U)$ is bounded.

For measurable functions $f:D\to\RR$ and $\nu:D\to[0,\infty)$, where $D\subset\RR^{m+n}$ and $E = \Pi_m(E)$, define the sets $\LC_{\loc}(f,\nu,E)$, $\LC_{\loc}(f,\nu,x)$ and $\Int^{p}_{\loc}(f,\nu,E)$ analogously to how $\LC(f,\nu,E)$, $\LC(f,\nu,x)$ and $\Int^p(f,\nu,E)$ were defined in Section \ref{s:mainResults}, but replacing the condition $f(x,\cdot)\in L^p(\nu_x)$ with $f(x,\cdot)\in L^{p}_{\loc}(\nu_{x})$.

\begin{proposition}\label{prop:LClocal}
The local analog of Theorem \ref{thm:LC} holds, which describes the structure of $\LC_{\loc}(f,|\mu|^q,E)$ rather than $\LC(f,|\mu|^q,E)$.
\end{proposition}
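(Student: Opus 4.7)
The plan is to bootstrap from Theorem \ref{thm:LC} by introducing auxiliary ``base point'' and ``radius'' parameters and then eliminating them using Theorem \ref{thm:vanish}. Specifically, I would introduce new variables $y_0 \in \RR^n$ and $t \in (0,\infty)$ and set
\[
\tld{D} = \{(x, y_0, t, y) : (x, y_0) \in D,\ (x,y) \in D,\ t > 0,\ |y - y_0| < t\},
\]
together with $\tld f(x, y_0, t, y) = f(x, y)$ and $\tld\mu(x, y_0, t, y) = \mu(x, y)$ in $\C(\tld D)$. Applying Theorem \ref{thm:LC} to $\tld f$ and $\tld\mu$ with the first $m+n+1$ coordinates as parameters yields a finite collection $\tld\I$ of open subintervals of $(0,\infty]$ with endpoints in $\left(\Span_\QQ\{1,q\}\cap[0,\infty)\right)\cup\{\infty\}$, together with a constructible function $\tld g_{\tld I}$ on $D \times (0,\infty)$ for each $\tld I \in \tld\I$ whose zero locus equals $\{(x, y_0, t) : \tld I \subset \LC(\tld f, |\tld\mu|^q, (x, y_0, t))\}$.

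For fixed $x \in E$ and $y_0 \in D_x$, the interval $\LC(\tld f, |\tld\mu|^q, (x, y_0, t))$ grows (with respect to set inclusion) as $t$ decreases, since shrinking the ball can only enlarge the set of admissible exponents $p$; it therefore stabilizes as $t \to 0^+$ to some ``germ'' interval $I_{\mathrm{germ}}(x, y_0) \in \tld\I$. By applying Proposition \ref{prop:subPrep} in the $t$-variable to the finite family of zero sets $\{\tld g_{\tld I} = 0\}$, one obtains a subanalytic function $t_0 : D \to (0,\infty)$ such that $\LC(\tld f, |\tld\mu|^q, (x, y_0, t)) = I_{\mathrm{germ}}(x, y_0)$ throughout $t \in (0, t_0(x, y_0))$. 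By definition $p \in \LC_{\loc}(f, |\mu|^q, x)$ iff for every $y_0 \in D_x$ there exists $t > 0$ with $p \in \LC(\tld f, |\tld\mu|^q, (x, y_0, t))$, which by the monotonicity just observed is equivalent to $p \in I_{\mathrm{germ}}(x, y_0)$ for every $y_0 \in D_x$; thus
\[
\LC_{\loc}(f, |\mu|^q, x) = \bigcap_{y_0 \in D_x} I_{\mathrm{germ}}(x, y_0),
\]
an intersection of members of the finite set $\tld\I$. This shows that the collection $\I_{\loc} := \{\LC_{\loc}(f,|\mu|^q,x) : x \in E\}$ is finite and that each of its members has endpoints in the required set.

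For each $I \in \I_{\loc}$, define the constructible function on $D$
\[
h_I(x, y_0) = \prod_{\tld I \in \tld\I,\ I \subset \tld I} \tld g_{\tld I}\bigl(x, y_0, t_0(x, y_0)/2\bigr),
\]
whose vanishing at $(x, y_0)$ is equivalent to $I \subset I_{\mathrm{germ}}(x, y_0)$, because $I_{\mathrm{germ}}(x, y_0)$ is itself the unique maximal member $\tld I$ of $\tld\I$ for which $\tld g_{\tld I}(x, y_0, t_0(x,y_0)/2) = 0$. Applying Theorem \ref{thm:vanish} to $h_I$ then produces the required $g_I \in \C(E)$ with zero locus $\{x \in E : I \subset \LC_{\loc}(f, |\mu|^q, x)\}$. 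The main obstacle is the construction of the subanalytic radius $t_0$ in the second paragraph: this comes down to a subanalytic cell decomposition in the $t$-variable of each locus $\{\tld g_{\tld I} = 0\}$ done uniformly in $(x, y_0) \in D$, after which one may take $t_0(x,y_0)$ to be, say, half the smallest positive $t$-coordinate at which any of these loci has a cell boundary above $(x,y_0)$.
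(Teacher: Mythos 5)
Your overall strategy — reduce the local statement to the global Theorem \ref{thm:LC} by adding auxiliary parameters and then eliminate the extra parameters with Theorem \ref{thm:vanish} — is indeed the same high-level idea as the paper's proof. The paper does it more economically, though: it extends $f$ and $\mu$ by $0$ to $E\times\RR^n$, sets $F(x,y,z)=f(x,y+z)$ and $\nu(x,y,z)=|\mu(x,y+z)|^q$ on $E\times\RR^n\times[-1,1]^n$, and uses the compactness of the \emph{fixed} neighborhood $[-1,1]^n$ to obtain $\LC_{\loc}(f,|\mu|^q,x)=\bigcap_{y\in\RR^n}\LC(F,\nu,(x,y))$. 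This adds only the base-point parameter $y$ and no radius parameter, after which one application of Corollary \ref{cor:G_P} and one application of Theorem \ref{thm:vanish} finish the job. Your version introduces a variable radius $t$, and that is exactly where the argument breaks.

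The gap is in the sentence ``By applying Proposition \ref{prop:subPrep} in the $t$-variable to the finite family of zero sets $\{\tld g_{\tld I}=0\}$, one obtains a subanalytic function $t_0:D\to(0,\infty)$ \dots''. The functions $\tld g_{\tld I}$ produced by Theorem \ref{thm:LC} are \emph{constructible}, not subanalytic (the ``Moreover'' clause of Theorem \ref{thm:LC} only guarantees subanalyticity when $f$ and $\mu$ are subanalytic, which they need not be here). Zero loci of constructible functions are in general \emph{not} subanalytic sets: they are definable in $\RR_{\an,\exp}$ but can have logarithmic asymptotics incompatible with global subanalyticity. Proposition \ref{prop:subPrep} applies only to subanalytic functions and therefore cannot be used to decompose these loci. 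You could do a cell decomposition in the o-minimal structure $\RR_{\an,\exp}$ and extract a \emph{definable} stabilizing radius $t_0$, but then $t_0$ need not be subanalytic, so the composite $h_I(x,y_0)=\prod_{\tld I\supset I}\tld g_{\tld I}\bigl(x,y_0,t_0(x,y_0)/2\bigr)$ need not lie in $\C(D)$, and Theorem \ref{thm:vanish} no longer applies to it. No straightforward patch recovers constructibility of $h_I$: the set $\{(x,y_0):I\subset I_{\mathrm{germ}}(x,y_0)\}$ is an existential projection (over $t$) of a constructible zero locus, and the paper provides no tool showing that such projections are again constructible zero loci. The monotonicity observation is correct, and the logical bookkeeping in your third paragraph would be fine \emph{if} $h_I$ were constructible, but it isn't shown to be. The clean way around all of this is precisely the paper's device of a fixed compact neighborhood, which eliminates the need for any stabilizing radius.
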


\begin{proof}
By extending $f$ and $\mu$ by $0$ on $(E\times\RR^n)\setminus D$, we may assume that $D = E\times\RR^n$.  Define functions $F$ and $\nu$ on $E\times\RR^n\times[-1,1]^n$ by $F(x,y,z) = f(x,y+z)$ and $\nu(x,y,z) = |\mu(x,y+z)|^q$.  The compactness of $[-1,1]^n$ implies that for each $x\in E$ and $p\in(0,\infty]$, $f(x,\cdot)\in L_{\loc}^{p}(|\mu|^{q}_{x})$ if and only if $F(x,y,\cdot)\in L^p(\nu_{(x,y)})$ for all $y\in\RR^n$.  Therefore
\[
\LC_{\loc}(f,|\mu|^q,x) = \bigcap_{y\in\RR^n} \LC(F,\nu,(x,y)).
\]
Theorem \ref{thm:LC} shows that $\{\LC(F,\nu,(x,y)) : (x,y)\in E\times\RR^n\}$ is a finite set of subintervals of $(0,\infty]$ with endpoints in $(\Span_{\QQ}\{1,q\}\cap[0,\infty))\cup\{\infty\}$, so the set
\[
\I_{\loc} := \{\LC_{\loc}(f,|\mu|^q,x) : x\in E\}
\]
is of this form as well.  Let $I\in\I_{\loc}$.  By Corollary \ref{cor:G_P} we may fix $g\in\C(E\times\RR^n)$ such that
\[
\{(x,y)\in E\times\RR^n : I\subset\LC(F,\nu,(x,y))\} = \{(x,y)\in E\times\RR^n : g(x,y)=0\}.
\]
Thus
\[
\{x\in E : I \subset\LC_{\loc}(f,|\mu|^q,x)\}
=
\{x\in E : \text{$g(x,y) = 0$ for all $y\in\RR^n$}\},
\]
and this set is the zero locus of a constructible function by Theorem \ref{thm:vanish} (or Corollary \ref{cor:idenZero}).
\end{proof}

Suppose that $f$ and $\nu$ are complex-valued Lebesgue measurable functions on a measurable set $D\subset\RR^{m+n}$ such that $\nu(x,\cdot)$ is Lebesgue integrable on $D_x$ for all $x\in E$, where $E =\Pi_m(D)$.  For each $x\in E$,  define a complex measure $\nu_x$ on $D_x$ by setting
\[
\nu_x(Y) = \int_Y \nu(x,y) dy
\]
for each Lebesgue measurable set $Y\subset D_x$.  The notion of an $L^p$-class with respect to a complex-measure is defined using the absolute variation of the measure, so we define $\LC(f,\nu,E) := \LC(|f|,|\nu|,E)$, $\LC(f,\nu,x) := \LC(|f|,|\nu|,x)$ for each $x\in E$, and $\Int^p(f,\nu,E) := \Int^p(|f|,|\nu|,E)$ for each $p\in(0,\infty]$.

\begin{proposition}\label{prop:LCcomplex}
The complex analog of Theorem \ref{thm:LC} holds with $q=1$, which describes the structure of $\LC(f,\mu,E)$ for complex-valued functions $f$ and $\mu$ on a subanalytic set $D\subset\RR^{m+n}$ whose real and imaginary parts are constructible, where $\mu(x,\cdot)$ is Lebesgue integrable on $D_x$ for all $x$ in $E = \Pi_m(D)$.
\end{proposition}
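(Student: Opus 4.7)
My plan is to reduce the complex statement to Theorem \ref{thm:LC} applied to real constructible data, via the standard squaring trick that eliminates moduli. First, decompose $f = f_1 + if_2$ and $\mu = \mu_1 + i\mu_2$ with $f_1, f_2, \mu_1, \mu_2 \in \C(D)$ (by hypothesis on the real and imaginary parts). Since $\C(D)$ is an $\RR$-algebra, the nonnegative functions
\[
F := f_1^2 + f_2^2 \quad \text{and} \quad M := \mu_1^2 + \mu_2^2
\]
also lie in $\C(D)$, and satisfy $|f|^2 = F$ and $|\mu|^2 = M$ pointwise on $D$.

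The key observation is that Lebesgue classes transform cleanly under this squaring. For $x \in E$ and $p \in (0,\infty)$,
\[
\int_{D_x} |f(x,y)|^p\, |\mu(x,y)|\, dy = \int_{D_x} F(x,y)^{p/2}\, |M(x,y)|^{1/2}\, dy,
\]
so $f(x,\cdot) \in L^p(|\mu|_x)$ if and only if $F(x,\cdot) \in L^{p/2}(|M|^{1/2}_x)$. For $p=\infty$ the measures $|\mu|_x$ and $|M|^{1/2}_x$ literally coincide (both have Lebesgue density $|\mu(x,\cdot)| = M(x,\cdot)^{1/2}$), so the $L^{\infty}$ conditions agree as well. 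Hence the self-homeomorphism $\sigma \colon p \mapsto 2p$ of $(0,\infty]$ (with $\sigma(\infty) = \infty$) sends $\LC(F, |M|^{1/2}, x)$ bijectively onto $\LC(f, \mu, x)$ for every $x \in E$.

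Now I would invoke Theorem \ref{thm:LC} for the real constructible functions $F, M$ with exponent $q' = 1/2$: it supplies a finite set $\I' := \{\LC(F, |M|^{1/2}, x) : x \in E\}$ of open subintervals of $(0,\infty]$ with endpoints in $(\Span_{\QQ}\{1, 1/2\} \cap [0,\infty)) \cup \{\infty\} = \QQ_{\geq 0} \cup \{\infty\}$, and for each $I' \in \I'$ a function $g_{I'} \in \C(E)$ whose zero locus equals $\{x \in E : I' \subset \LC(F, |M|^{1/2}, x)\}$. Transporting by $\sigma$, the finite set $\I := \{\sigma(I') : I' \in \I'\}$ consists of open subintervals of $(0,\infty]$ with endpoints still in $\QQ_{\geq 0} \cup \{\infty\}$ (which matches the required set $(\Span_{\QQ}\{1,1\} \cap [0,\infty)) \cup \{\infty\}$ for the parameter $q=1$), and it equals $\{\LC(f,\mu,x) : x \in E\}$. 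For each $I = \sigma(I') \in \I$, set $g_I := g_{I'}$; since $I \subset \LC(f,\mu,x)$ iff $I' \subset \LC(F, |M|^{1/2}, x)$, this $g_I$ witnesses the required zero-locus identity.

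I do not expect a real obstacle: the proof is an algebraic reparameterization riding on the already-established real theorem. The only small points to verify are that $\C(D)$ is closed under sums and squares (immediate from the ring structure) and the measure-theoretic identity $|\mu|_x = |M|^{1/2}_x$ (immediate from $M \geq 0$). Everything else is bookkeeping about the bijection $\sigma$.
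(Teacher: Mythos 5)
Your proof is correct and matches the paper's own argument: the paper also applies Theorem \ref{thm:LC} to the constructible functions $|f|^2$ and $|\mu|^2$ with exponent $q=1/2$, and uses the equivalence $|f|\in L^p(|\mu|_x) \iff |f|^2\in L^{p/2}(|\mu|_x)$. You have simply written out the bookkeeping (the $\RR$-algebra structure of $\C(D)$, the coincidence of measures, the reparameterization $\sigma\colon p\mapsto 2p$) in more explicit detail.
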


\begin{proof}
Apply Theorem \ref{thm:LC} to the constructible functions $|f|^2$ and $|\mu|^2$ with $q=1/2$.  Then note that for any $p\in (0,\infty]$, $|f|\in L^p(|\mu|_x)$ if and only if $|f|^2 \in L^{p/2}(|\mu|_x)$.
\end{proof}

For the last result of this section, consider a subanalytic set $D\subset\RR^{m+n}$ such that for each $x$ in $E := \Pi_m(D)$, the fiber $D_x$ is a smooth $k$-dimensional submanifold of $\RR^n$. For each $x\in E$, consider a smooth $k$-form $\omega_x$ on $D_x$, such that moreover there exist constructible functions $\omega_{i_1,\ldots,i_k}(x,y)$ on $D$ with $1\leq i_1 < \cdots < i_k\leq n$ such that
\[
\omega_x(y) = \sum_{1\leq i_1 < \cdots < i_k\leq n} \omega_{i_1,\ldots,i_k}(x,y) dy_{i_1} \wedge \cdots \wedge dy_{i_k}.
\]
For each $x\in E$, write $|\omega_x|$ for the measure on $D_x$ associated to the smooth $k$-form $\omega_x$.
For $f\in\C(D)$, consider
\[
\LC(f,\omega_x,x) = \{p\in(0,\infty] : f(x,\cdot)\in L^p(|\omega_x|)\},
\]
and
\[
\LC(f,\omega,E) = \{(x,p)\in E\times(0,\infty] : f(x,\cdot) \in L^p(|\omega_x|)\},
\]
where $\omega$ stands for the family  $(\omega_x)_{x\in E}$.

\begin{proposition}\label{prop:LCdiffForm} With the above notation for $D$, $\omega$, and $E$, and with $f\in \C(D)$,
the analog of Theorem \ref{thm:LC} holds for $\LC(f,\omega,E)$.
To adapt the last sentence of Theorem \ref{thm:LC} to $\LC(f,\omega,E)$, the extra assumption that $\mu$ be subanalytic should be replaced by the condition that the $\omega_{i_1,\ldots,i_k}$ be subanalytic.
\end{proposition}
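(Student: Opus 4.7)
The plan is to reduce to Theorem \ref{thm:LC} by a subanalytic cell decomposition of $D$ that parametrizes each top-dimensional piece by an open subset of $\RR^{m+k}$ and pulls $\omega$ back to a constructible density against Lebesgue measure.

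First I would apply Corollary \ref{cor:subPrep} to partition $D$ into finitely many subanalytic cells over $\RR^m$ on which $f$ and each coefficient $\omega_{i_1,\ldots,i_k}$ is analytic. Since $D_x$ is a smooth $k$-dimensional submanifold of $\RR^n$, every cell $A$ has fiber dimension $d_A\leq k$; cells with $d_A<k$ have $|\omega_x|$-measure zero and may be discarded. Let $\A_k$ be the collection of $k$-dimensional cells. For each $\lambda$-cell $A\in\A_k$, the projection $\Pi_{m,\lambda}$ restricts to a subanalytic analytic isomorphism $A\to B_A:=\Pi_{m,\lambda}(A)\subset\RR^{m+k}$ with $(B_A)_x$ open in $\RR^k$; write $g_A:B_A\to A$, $g_A(x,z)=(x,\phi_A(x,z))$, for its inverse.

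Next I would pull $\omega_x$ back. For $A\in\A_k$ the fiber $A_x$ is open in $D_x$ by invariance of dimension, so the change-of-variables formula gives
\[
g_A^*\omega_x=\tilde\omega_A(x,z)\,dz_1\wedge\cdots\wedge dz_k,\quad\tilde\omega_A=\sum_{i_1<\cdots<i_k}(\omega_{i_1,\ldots,i_k}\circ g_A)\cdot\det\Bigl(\PD{}{\phi_{A,i_\alpha}}{z_\beta}\Bigr)_{\alpha,\beta}.
\]
Partial derivatives of analytic subanalytic functions are analytic subanalytic, so $\tilde\omega_A\in\C(B_A)$, and is even subanalytic in the subanalytic variant. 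Moreover $\int_{A_x}|f|^p\,d|\omega_x|=\int_{(B_A)_x}|f\circ g_A|^p\,|\tilde\omega_A|\,dz$, so applying Theorem \ref{thm:LC} to $(f\circ g_A,\tilde\omega_A)$ on $B_A$ with $q=1$, combined with Corollary \ref{cor:G_P}, produces for every subinterval $I$ of $(0,\infty]$ a function $g_{A,I}\in\C(\Pi_m(A))$ (subanalytic in the subanalytic variant) whose zero set in $\Pi_m(A)$ equals $\{x:I\subset\LC(f\circ g_A,|\tilde\omega_A|,x)\}$, and the finite set of such Lebesgue classes has endpoints in $(\QQ\cap[0,\infty))\cup\{\infty\}$.

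Finally I would assemble. Since the $\{A_x\}_{A\in\A_k}$ are pairwise disjoint and cover $D_x$ up to an $|\omega_x|$-null set,
\[
\LC(f,\omega,x)=\bigcap_{A\in\A_k,\,x\in\Pi_m(A)}\LC(f\circ g_A,|\tilde\omega_A|,x),
\]
which shows $\I:=\{\LC(f,\omega,x):x\in E\}$ is finite with the claimed endpoint property. For each $I\in\I$, I extend each $g_{A,I}$ to $E$ by writing it as a sum of constructible monomials $h_0\prod_{\ell\geq 1}\log h_\ell$ with $h_0$ subanalytic and $h_\ell>0$ subanalytic, and replacing $h_0$ by its extension-by-$0$ and each $h_\ell$ by its extension-by-$1$ from $\Pi_m(A)$ to $E$; such extensions are subanalytic because their graphs are subanalytic. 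The resulting $G_{A,I}\in\C(E)$ equals $g_{A,I}$ on $\Pi_m(A)$ and vanishes on $E\setminus\Pi_m(A)$, and $G_I:=\sum_{A\in\A_k}G_{A,I}^{\,2}\in\C(E)$ satisfies $\{G_I=0\}=\{x\in E:I\subset\LC(f,\omega,x)\}$. The main obstacle is the pullback step: one must verify that $\tilde\omega_A$ lies in the appropriate function class and that $|\omega_x|\Restr{A_x}$ really is the $g_A$-pushforward of $|\tilde\omega_A|\,dz$; both reduce to routine facts about smooth $k$-forms combined with subanalytic dimension theory.
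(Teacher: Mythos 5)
Your proof is correct and follows essentially the same route as the paper's: parametrize the $k$-dimensional pieces of $D$ by coordinate projections $\Pi_{m,\lambda}$, pull $\omega$ back to a constructible density $\tilde\omega_A$ against Lebesgue measure via the Jacobian, apply Theorem~\ref{thm:LC} with $q=1$ on each piece, and assemble via intersection. The only cosmetic difference is in how the pieces are produced: the paper invokes a finite subanalytic cover $\U$ of $D$ whose fibers $U_x$ are open in $D_x$ and on which some $\Pi_{\lambda^U}$ is injective of constant rank $k$, while you take a cell partition, discard cells of fiber dimension below $k$ (which are $|\omega_x|$-null), and use invariance of domain to see that the remaining cell fibers are open in $D_x$. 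Both are standard o-minimal routes to the same local parametrization; your version is slightly more explicit, and your extension-by-$0$/extension-by-$1$ argument for globalizing the $g_{A,I}$ makes precise a step the paper leaves implicit.
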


\begin{proof}
Because $D$ is subanalytic, basic o-minimality implies that there exists a finite family $\U$ of subanalytic subsets of $D$ which covers $D$ and is such that the following hold for each $U\in\U$:
\begin{enumerate}
\item
for every $x\in \Pi_m(U)$, the fiber $U_x$ is open in $D_x$;

\item
there exists an increasing function $\lambda^U:\{1,\ldots,k\}\to\{1,\ldots,n\}$ such that for each $x\in\Pi_m(U)$, the projection $\Pi_{\lambda^U}$ is injective on $U_x$ and has constant rank $k$.
\end{enumerate}
For each $U\in\U$, let $G^U(x,z) = (x,g^U(x,z))$ be the inverse of $\Pi_{m,\lambda^U}:U\to\Pi_{m,\lambda^U}(U)$, where $z = (z_1,\ldots,z_k)$.  Then for each $U\in\U$, the functions $f\circ G^U$ and
\[
\omega^U(x,z) := \sum_{1\leq i_1 < \cdots < i_k\leq n} \omega_{i_1,\ldots,i_k}(x,g^U(x,z)) \frac{\partial(g_{i_1}^{U},\ldots,g_{i_k}^{U})}{\partial(z_1,\ldots,z_k)}(x,z)
\]
are both constructible functions on $U$, and in the case that $f$ and all the $\omega_{i_1,\ldots,i_k}$ are subanalytic, the $\omega^U$ and $f\circ G^U$ also are. Hence, Theorem \ref{thm:LC} applies to $\LC(f\circ G^U,|\omega^U|,\Pi_m(U))$.  The proposition now follows relatively easily from this and from the fact that
\[
\LC(f\Restr{U},\omega\Restr{U},\Pi_m(U)) = \LC(f\circ G^U,|\omega^U|, \Pi_m(U))
\]
for each $U\in\U$.
\end{proof}

\section{Fiberwise Vanishing of Constructible Functions}
\label{s:vanish}

This section proves Theorem \ref{thm:vanish}.

\begin{proof}[Proof of Theorem \ref{thm:vanish}]
Let $f\in\C(D)$ for a subanalytic set $D\subset\RR^{m+n}$, and put $E = \Pi_m(D)$. Write $V = \{x\in E : \text{$f(x,y) = 0$ for all $y\in D_x$}\}$.  We proceed by induction on $n$.

First suppose that $n=1$.  By Corollary \ref{cor:subPrep} we may fix a finite partition $\A$ of $D$ into cells over $\RR^m$ such that the restriction of $f$ to $A$ is analytic for each $A\in\A$.  We claim that for each $A\in\A$ there exists $g_A\in\C(\Pi_m(A))$ such that
\[
\{x\in\Pi_m(A) : \text{$f(x,y) = 0$ for all $y\in A_x$}\}
=
\{x\in\Pi_m(A) : g_A(x) = 0\}.
\]
The theorem (with $n=1$) follows from the claim, for then
\[
V
=
\left\{x\in E : \sum_{A\in\A} (g'_A(x))^2 = 0\right\},
\]
where $g'_A:E\to\RR$ is the extension of $g_A$ by $0$ on $E\setminus\Pi_m(A)$.  To prove the claim, fix $A\in\A$.  We may assume that $A$ is open over $\RR^m$, else the claim is trivial.  Since $f(x,\cdot)$ is analytic on $A_x$ for each $x\in\Pi_m(A)$, and since $f\Restr{A}$ is definable in the expansion of the real field by all restricted analytic functions and the exponential function, which is o-minimal (see Van den Dries, Macintyre and Marker \cite{vdDMM}, or Lion and Rolin \cite{LR97}), it follows that we may fix a positive integer $N$ such that for each $x\in\Pi_m(A)$, $f(x,y) = 0$ for all $y\in A_x$ if and only if there exist distinct $y_1,\ldots,y_N\in A_x$ such that $f(x,y_1) = \cdots = f(x,y_N) = 0$.  So fix subanalytic functions $\xi_1,\ldots,\xi_N :\Pi_m(A)\to\RR$ whose graphs are disjoint subsets of $A$.  Then the claim holds for the function
\[
g_A(x) = \sum_{i=1}^{N} \left(f(x,\xi_i(x))\right)^2.
\]
This establishes the theorem when $n = 1$.

Now suppose that $n>1$, and inductively assume the theorem holds with $k$ in place of $n$ for each $k < n$.  The set $V$ is defined by the formula
\[
(x\in E) \wedge \forall y\in\RR^n((x,y)\in D \implies f(x,y) = 0).
\]
Applying the induction hypothesis twice shows that that this formula is equivalent to
\[
(x\in E) \wedge \forall y_1\in\RR((x,y_1)\in\Pi_{m+1}(D) \implies h(x,y_1) = 0)
\]
for some $h\in\C(\Pi_{m+1}(D))$, which in turn is equivalent to
\[
(x\in E) \wedge (g(x) = 0)
\]
for some $g\in\C(E)$.  Thus $V = \{x\in E : g(x) = 0\}$.
\end{proof}

\section{Parameterized Rectilinearization of Subanalytic Functions}\label{s:subRect}

\begin{definition}\label{def:rectRatMonMap}
Consider $l\in\{0,\ldots,n\}$ and a rational monomial map $\psi$ on $B$ over $\RR^m$, where $B\subset\RR^{m+n}$.  We say that $\psi$ is {\bf\emph{$l$-rectilinear over $\RR^m$}} if $B$ is $l$-rectilinear over $\RR^m$ (as defined prior to Theorem \ref{thm:subRect}) and if $\psi$ is of the form
\[
\psi(x,y) = \left(c_1(x) y_{\leq l}^{\gamma_1}, \ldots, c_N(x) y_{\leq l}^{\gamma_N}, y_{l+1},\ldots,y_n\right)
\]
for some positively-valued analytic subanalytic functions $c_1,\ldots,c_N$ on $\Pi_m(B)$ and tuples $\gamma_1,\ldots,\gamma_N$ in $\QQ^l$.  We say that set $B$, or a rational monomial map $\psi$ on $B$ over $\RR^m$, is {\bf\emph{rectilinear over $\RR^m$}} to mean that it is $\l$-rectilinear over $\RR^m$ for some $l$.
\end{definition}

\begin{definition}\label{def:openPart}
For a subanalytic set $D\subset\RR^{m+n}$, an {\bf\emph{open partition of $D$ over $\RR^m$}} is a finite family $\A$ of disjoint subanalytic subsets of $D$ that are open over $\RR^m$ and are such that $\dim(D\setminus\bigcup\A)_x < n$ for all $x\in\Pi_m(D)$.
\end{definition}

The main purpose of this section is to prove the following proposition.

\begin{proposition}\label{prop:subRect}
Let $\F$ be a finite set of subanalytic functions on a subanalytic set $D\subset\RR^{m+n}$.  Then there exists an open partition $\A$ of $D$ over $\RR^m$ such that for each $A\in\A$ there exists a subanalytic analytic isomorphism $F:B\to A$ over $\RR^m$ with $B\subset\RR^{m+n}$, and there exist rational monomial maps $\varphi$ on $A$ and $\psi$ on $B$ over $\RR^m$ with the following properties.
\begin{enumerate}
\item
Pullback Property: Each function in $\{f\circ F\}_{f\in\F}\cup\{\det\PD{}{F}{y}\}$ is $\psi$-prepared, and $\psi$ is rectilinear over $\RR^m$.

\item
Pushforward Property:
The components of  $F^{-1}$ are $\varphi$-prepared, and $\psi\circ F^{-1}$ is a $\varphi$-function.
\end{enumerate}
\end{proposition}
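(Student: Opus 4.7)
The plan is to derive the result from Proposition \ref{prop:subPrep} via a sequence of monomial substitutions that rectilinearize prepared cells, proceeding by induction on $n$. First, I apply Proposition \ref{prop:subPrep} to $\F$ augmented with auxiliary subanalytic functions that record the eventual Jacobian $\det\PD{}{F}{y}$, obtaining a finite partition of $D$ into $\lambda$-cells. Cells whose fibers have dimension less than $n$ are absorbed into the complement of the open partition, so I may assume each cell $A$ is open over $\RR^m$ and carries a prepared rational monomial map $\varphi$ with some center $\theta$, with respect to which every function in the augmented family is $\varphi$-prepared. Applying the analytic isomorphism $T_\theta$ of Definition \ref{def:basicRatMonMap}, after refining according to the sign data $\varepsilon_i, \zeta_i$, replaces $A$ by a cell inside $\RR^m \times (0,1)^n$ with center zero and basic rational monomial map $\varphi_\theta$, on which every $f \circ T_\theta^{-1}$ remains $\varphi_\theta$-prepared.

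The rectilinearization then proceeds coordinate by coordinate. Writing the transformed cell in its prepared form $a_i(x, y_{<i}) < y_i < b_i(x, y_{<i})$, with $a_i, b_i, b_i - a_i$ all $\varphi_{\theta,<i}$-prepared and $a_i$ either identically zero or strictly positive, I would process each $y_i$ by the substitution $y_i = a_i + (b_i - a_i) z_i$ with $z_i \in (0,1)$. When $a_i \equiv 0$, this directly expresses $y_i$ as a $\varphi_{\theta,<i}$-prepared monomial in the previous coordinates times $z_i$. When $a_i > 0$, a further refinement of the partition of $\Pi_{m+i-1}(A)$ according to which of $a_i$ or $b_i - a_i$ carries the dominant monomial factor turns $y_i$ into a $\varphi_{\theta,<i}$-prepared monomial times a $\varphi_{\theta,<i}$-unit in $z_i$. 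Iterating yields an analytic isomorphism $F:B \to A$ over $\RR^m$ onto an $l$-rectilinear set $B$, with $l$ equal to the number of coordinates whose bounds contributed a nontrivial monomial and the remaining $n - l$ coordinates freely filling $(0,1)$.

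The pullback property then follows by transporting $\varphi_\theta$ through the chain of substitutions: the composed pullback $\psi$ on $B$ is an $l$-rectilinear rational monomial map, each $f \circ F$ is $\psi$-prepared, and since every substitution has triangular Jacobian with monomial diagonal, the determinant $\det\PD{}{F}{y}$ is also $\psi$-prepared, having been tracked from the outset via the augmented family. The pushforward property follows dually by inverting the chain: the components of $F^{-1}$ arise as monomials in $|\tld{y}|$ with $\varphi$-prepared coefficients and are therefore $\varphi$-prepared, and $\psi \circ F^{-1}$ is a composition of such monomials with $\varphi$-units, hence a $\varphi$-function. The main obstacle is the coordinated bookkeeping: at each coordinate the substitution must be chosen so that the pullback preparation, the pushforward preparation, the rectilinear structure, and the Jacobian data all remain mutually compatible, and the refinements introduced to achieve monomiality at step $i$ must be consistent with those already made at steps $j < i$. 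Once the substitutions are set up in the correct normal form and the Jacobian functions are included at the outset in the family prepared by Proposition \ref{prop:subPrep}, the induction on $n$ closes cleanly.
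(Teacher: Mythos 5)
The central step of your argument---substituting $y_i = a_i + (b_i - a_i)z_i$ and claiming that a refinement of $\Pi_{m+i-1}(A)$ according to which of $a_i$ or $b_i - a_i$ dominates turns $y_i$ into a prepared monomial times a unit in $z_i$---does not work. Consider the elementary case $0 < a(x) < y_i < 1$ with $a(x)\to 0$. Your substitution gives $y_i = a(x) + (1 - a(x))z_i$, which is \emph{not} of the form (monomial in $z_i$)$\times$(unit): near $z_i = 0$ it behaves like the constant $a(x)$, and near $z_i = 1$ it behaves like the constant $1$. No rectilinear rational monomial map $\psi$ can make $y_i\circ F$ $\psi$-prepared on the whole fiber. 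Refining $\Pi_{m+i-1}(A)$ cannot repair this, because the obstruction lives in the $y_i$-direction (over a fixed base point): one must also subdivide the range of $y_i$ itself, comparing $y_i$ with multiples of $a_i$. That is precisely what the paper does with its two-region split $a_d < y_d < \hat a\, y_{\le l}^\alpha C$ versus $\hat a\, y_{\le l}^\alpha C < y_d < 1$, followed by a blowup on the first piece and a swap on the second, and it is the heart of the argument; your proposal contains no analog of it.

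There are two further gaps that compound this. First, after the affine substitution, every $z_i$ ranges over $(0,1)$, so you would always land in the $0$-rectilinear case; but then the components $c_i(x)$ of a rectilinear map may depend only on $x$, and quantities like $(b_i - a_i)/a_i$---which you need to absorb into a unit---are genuine functions of $y_{<i}$, hence of $z_{<i}$, and cannot appear. Rectilinearity instead requires that the constrained block $\Pi_l(B_x)$ have compact closure inside $(0,1]^l$, which the affine substitution does not arrange; achieving it requires reordering coordinates (the paper's swap construction) and ensuring the lower bounds are multiplicatively comparable across the block, neither of which you address. Second, when $a_d$ depends on the non-rectilinear coordinates $y_{l+1},\dots,y_{d-1}$ (the case $\supp(\alpha_{>l}) \ne \emptyset$), the single split above is not enough; the paper runs a secondary induction on $\lvert\supp(\alpha_{>l})\rvert$ using power substitutions, blowups in $y_{l+1}$, and further swaps, and without this the coordinate-by-coordinate iteration you describe does not close. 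In short, the paper's repertoire of six pullback constructions (adjustment, restriction, power substitution, blowup, flip, swap) and the bookkeeping that makes the pushforward property hold simultaneously are not optional packaging but the actual content of the proof; the one-shot affine change of variables elides the problem rather than solving it.
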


The purpose of the pushforward property is that it ensures that for each subanalytic function $h:B\to\RR$ that is $\psi$-prepared, $h\circ F^{-1}$ is $\varphi$-prepared.   This proposition is essentially Theorem \ref{thm:subRect}, the only differences being that the theorem does not mention the pushforward property and that the theorem deals with an actual partition of $D$ rather than just an open partition of $D$ over $\RR^m$.  In the proposition we use open partitions over $\RR^n$, rather than actual partitions, because it allows the proof of the proposition to be stated somewhat more simply since we may ignore subsets of $D$ whose fibers over $\RR^m$ have dimension less than $n$, and doing so is of no loss to the study of $L^p$-spaces on $D_x$.

Before proving the proposition, we use it to prove Theorem \ref{thm:subRect} and also Theorem \ref{thm:LC} when $f$ and $\mu$ are assumed to be subanalytic.

\begin{proof}[Proof of Theorem \ref{thm:subRect}]
Let $\F$ be a finite set of subanalytic functions on a subanalytic set $D\subset\RR^{m+n}$.  We proceed by induction on $n$.  The base case of $n=0$ is trivial, so assume that $n>0$ and that the theorem holds with $k$ in place of $n$ for all $k < n$.  Let $\A$ be the open partition of $D$ over $\RR^m$ given by applying Proposition \ref{prop:subRect} to $\F$, and let $D' = \bigcup\A$.  Thus the theorem holds for $\F\Restr{D'}$.  It follows from the induction hypothesis that the theorem also holds for $\F\Restr{D\setminus D'}$, since $D\setminus D'$ may be partitioned into cells over $\RR^m$, and each of these cells projects via an analytic isomorphism into $\RR^{m+d}$ for some $d < n$.
\end{proof}

The following lemma of one-variable calculus, and its corollary, are apparent.

\begin{lemma}\label{lemma:intBdd}
Let $\alpha\in\RR$ and $\beta\geq 0$.  Then the function $t\mapsto t^\alpha (\log t)^\beta$ is
\begin{enumerate}{\setlength{\itemsep}{3pt}
\item
integrable on $(0,1)$ if and only if $\alpha > -1$;

\item
bounded on $(0,1)$ if and only if $\alpha > 0$ or $\alpha=\beta=0$.
}\end{enumerate}
\end{lemma}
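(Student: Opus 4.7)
The plan is to observe that both properties only fail near $t=0^+$, since $t^\alpha(\log t)^\beta$ extends continuously to $t=1$ (with limiting value $1$ if $\beta=0$ and $0$ if $\beta>0$). So the analysis reduces to the asymptotic behavior of the integrand as $t\to 0^+$, which I would handle via the exponential substitution $u=-\log t$.

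For part (1), applying $u=-\log t$ (so $dt=-e^{-u}du$) transforms $\int_{0}^{1/2}t^\alpha|\log t|^\beta\,dt$ into $\int_{\log 2}^{\infty}u^\beta e^{-(\alpha+1)u}\,du$, a standard Gamma-type integral at infinity. If $\alpha+1>0$ the exponential decay dominates the polynomial factor $u^\beta$ and the integral converges, giving integrability on all of $(0,1)$; if $\alpha+1\leq 0$ the integrand fails to tend to zero (it grows or stays bounded away from $0$), and the integral diverges. Integrability on $[1/2,1)$ is free, so this yields the ``iff $\alpha>-1$'' characterization.

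For part (2), I would split into three cases near $t=0^+$. If $\alpha>0$, then $t^\alpha|\log t|^\beta\to 0$ because power decay beats logarithmic growth, so the function is bounded on $(0,1)$. If $\alpha=\beta=0$, the function is identically $1$, hence bounded. In the remaining cases, either $\alpha<0$ (so $t^\alpha\to\infty$ and $|\log t|^\beta$ does not vanish) or $\alpha=0$ and $\beta>0$ (so $|\log t|^\beta\to\infty$), and in either situation the function is unbounded near $0$. Combined with continuity at $t=1$, this gives the ``iff $\alpha>0$ or $\alpha=\beta=0$'' characterization.

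No step is really a main obstacle here; the lemma is a direct consequence of the exponential-beats-polynomial principle together with the continuity of the integrand away from $0$. The only mild subtlety is the interpretation of $(\log t)^\beta$ for $t\in(0,1)$ when $\beta$ is not an integer, which is harmless for questions of integrability and boundedness since these depend only on $|\log t|^\beta$.
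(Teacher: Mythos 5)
Your proof is correct. Note that the paper does not actually supply a proof of this lemma --- it is stated with the remark that ``the following lemma of one-variable calculus, and its corollary, are apparent,'' so there is no argument in the paper to compare against. Your substitution $u=-\log t$ reducing part (1) to a Gamma-type tail integral, and the three-case analysis near $t=0^+$ for part (2), are both sound and are the standard way to make the ``apparent'' claim rigorous.
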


\begin{corollary}\label{cor:rectIntBdd}
Suppose that $A\subset\RR^n$ is $l$-rectilinear over $\RR^0$, and let $\alpha = (\alpha_1,\ldots,\alpha_n)\in\RR^n$ and $\beta = (\beta_1,\ldots,\beta_n)\in[0,\infty)^n$.
Then the function $y\mapsto y^\alpha|\log y|^\beta$ is
\begin{enumerate}{\setlength{\itemsep}{3pt}
\item
integrable on $A$ if and only if for all $i\in\{l+1,\ldots,n\}$, $\alpha_i > -1$;

\item
bounded on $A$ if and only if for all $i\in\{l+1,\ldots,n\}$, $\alpha_i > 0$ or $\alpha_i=\beta_i=0$.
}\end{enumerate}
\end{corollary}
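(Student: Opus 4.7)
The plan is to reduce everything to a one‐variable analysis by exploiting the product structure of both the integrand and the domain, and then apply Lemma \ref{lemma:intBdd} coordinate by coordinate. First, I would observe that the function in question factors as
\[
y^\alpha|\log y|^\beta \;=\; \prod_{i=1}^{n} y_i^{\alpha_i}\,|\log y_i|^{\beta_i},
\]
so the behaviour in each coordinate can be treated separately, once I show the coordinates decouple on $A$.

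Next, I would unpack the definition of ``$l$-rectilinear over $\RR^0$'' from Definition \ref{def:rectRatMonMap} (as referenced prior to Theorem \ref{thm:subRect}): since $A$ has empty parameter space, it is simply an open subset of $(0,1)^n$ of the form $A = \Pi_l(A)\times(0,1)^{n-l}$ whose closure of $\Pi_l(A)$ is a compact subset of $(0,1]^l$. The key consequence is that there exists $\varepsilon>0$ with $\Pi_l(A)\subset [\varepsilon,1]^l$, so for $i\in\{1,\ldots,l\}$ the coordinate $y_i$ ranges in a set bounded away from $0$ and bounded above by $1$. On such a set, each factor $y_i^{\alpha_i}|\log y_i|^{\beta_i}$ is continuous and bounded (with the understanding that $|\log 1|^{\beta_i}=0$ for $\beta_i>0$ and $=1$ for $\beta_i=0$), and $\Pi_l(A)$ has finite Lebesgue measure.

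For part (1), I would then apply Fubini--Tonelli (the integrand is nonnegative) to write
\[
\int_A y^\alpha|\log y|^\beta\,dy
\;=\;
\left(\int_{\Pi_l(A)}\prod_{i=1}^{l} y_i^{\alpha_i}|\log y_i|^{\beta_i}\,dy_{\leq l}\right)
\cdot
\prod_{i=l+1}^{n}\int_0^1 y_i^{\alpha_i}|\log y_i|^{\beta_i}\,dy_i.
\]
The first factor is always finite by the boundedness/finite-measure observation above, while each of the remaining one-variable integrals is finite if and only if $\alpha_i>-1$ by Lemma \ref{lemma:intBdd}(1). For part (2), I would argue analogously: boundedness of a product of nonnegative functions of independent variables is equivalent to boundedness of each factor (the sup factorises). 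For $i\leq l$ the factors are automatically bounded, and for $i>l$ Lemma \ref{lemma:intBdd}(2) gives the stated condition.

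There is no genuine obstacle here; the only subtlety worth being explicit about is that the closure of $\Pi_l(A)$ lies in $(0,1]^l$ rather than $[0,1]^l$, which is exactly what provides the positive lower bound $\varepsilon$ for the first $l$ coordinates and makes the contributions of $i\leq l$ harmless for both integrability and boundedness.
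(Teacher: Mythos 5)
Your proof is correct and is exactly the argument the paper has in mind: the paper only remarks that Lemma \ref{lemma:intBdd} and Corollary \ref{cor:rectIntBdd} ``are apparent,'' and the intended justification is precisely the factorization $y^\alpha|\log y|^\beta=\prod_i y_i^{\alpha_i}|\log y_i|^{\beta_i}$ over the product domain $A=\Pi_l(A)\times(0,1)^{n-l}$, with the first $l$ coordinates harmless because $\overline{\Pi_l(A)}$ is compact in $(0,1]^l$, and Lemma \ref{lemma:intBdd} applied coordinatewise to the remaining $n-l$ factors. The only micro-caveat worth adding for completeness is that the contribution from the first $l$ coordinates (the bounded factor/integral over $\Pi_l(A)$) is strictly positive and finite, so multiplying by it neither creates nor destroys finiteness of the product.
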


Note that if $A\subset\RR^{m+n}$ is $l$-rectilinear over $\RR^m$, then by applying Corollary \ref{cor:rectIntBdd} to each of the fibers $A_x$, we see that $y\mapsto y^\alpha|\log y|^\beta$ is integrable on $A_x$ either for all $x\in\Pi_m(A)$ or for no $x\in\Pi_m(A)$, according to whether the condition given in clause 1 of the corollary holds; and likewise for boundedness and clause 2.

\begin{proof}[Proof of Theorem \ref{thm:LC} in the Subanalytic Case]
Suppose that $q > 0$ and that $f$ and $\mu$ are real-valued subanalytic functions on $D\subset\RR^{m+n}$.  Put $E = \Pi_m(D)$ and $\I = \{\LC(f,|\mu|^q,x) : x\in E\}$.  Apply Proposition \ref{prop:subRect} to $\F = \{f,\mu\}$.  This constructs an open partition $\A$ of $D$ over $\RR^m$ such that for each $A\in\A$, there exist a subanalytic analytic isomorphism $F:B\to A$ over $\RR^m$ and a rectilinear rational monomial map $\psi$ on $B$ over $\RR^m$ such that $f\circ F$, $\mu\circ F$ and $\det\PD{}{F}{y}$ are $\psi$-prepared.

Focus on one $A\in\A$, along with its associated maps $F:B\to A$ and $\psi$ on $B$, where $\psi$ is $l$-rectilinear over $\RR^m$.  Define $\nu:B\to\RR$ by
\[
\nu(x,y) = \left|\mu\circ F(x,y)\right|^q \left|\det\PD{}{F}{y}(x,y)\right|.
\]
On $B$ write
\begin{eqnarray*}
f\circ F(x,y)
    & = &
    a(x)y^\alpha u(x,y),
    \\
\nu(x,y)
    & = &
    b(x)y^\beta v(x,y),
\end{eqnarray*}
for some analytic subanalytic functions $a$ and $b$, tuples $\alpha = (\alpha_1,\ldots,\alpha_n)\in\QQ^n$ and $\beta = (\beta_1,\ldots,\beta_n)\in (\Span_{\QQ}\{1,q\})^n$, and $\psi$-units $u$ and $v$.  We may assume that $a$ and $b$ have constant sign.  If $a = 0$ or $b = 0$, let $I_A = (0,\infty]$.  Otherwise, let $I_A$ be the set consisting of all $p\in(0,\infty)$ such that $\alpha_i p + \beta_i > -1$ for all $i\in\{l+1,\ldots,n\}$, and also consisting of $\infty$ if $\alpha_i\geq 0$ for all $i\in\{l+1,\ldots,n\}$.  Note that $I_A$ is a subinterval of $(0,\infty]$ with endpoints in $(\Span_{\QQ}\{1,q\}\cap[0,\infty))\cup\{\infty\}$.  Also note that by Corollary \ref{cor:rectIntBdd},
\[
\LC(f\Restr{A},|\mu|^q\Restr{A},\Pi_m(A)) = \LC(f\circ F,\nu,\Pi_m(A)) = \Pi_m(A)\times I_A.
\]

Now, for each $x\in E$, the set $\LC(f,\mu,x)$ is a subinterval of $(0,\infty]$ with endpoints in $(\Span_{\QQ}\{1,q\}\cap[0,\infty))\cup\{\infty\}$ because it equals the intersection of the sets $I_A$ for all $A\in\A$ with $x\in\Pi_m(A)$.  This, and the fact that $\A$ is finite, also implies that $\I$ is finite.  To finish, let $I\in\I$, and note that $\{x\in E : I\subset\LC(f,\mu,x)\}$ equals
\[
\{x\in E : \text{$I\subset I_A$ for all $A\in\A$ with $x\in\Pi_m(A)$}\},
\]
which is a subanalytic set, and hence is the zero locus of a subanalytic function.
\end{proof}

We now turn our attention to proving Proposition \ref{prop:subRect}.

\begin{lemma}\label{lemma:rectBddMon}
Let $A\subset\RR^n$ be $l$-rectilinear over $\RR^0$, and let $\alpha = (\alpha_1,\ldots,\alpha_n)\in\QQ^n$.
\begin{enumerate}{\setlength{\itemsep}{3pt}
\item
If $\{y^\alpha : y\in A\}$ is bounded, then $\alpha_{l+1},\ldots,\alpha_n\geq 0$.

\item
Let $\beta\in\QQ$ and $B = \{(y,z)\in A\times\RR : a(y) < z < 1\}$, where $0 \leq a(y) < 1$ for all $y\in A$.  If $\{y^\alpha z^\beta : (y,z)\in B\}$ is bounded, then $\alpha_{l+1},\ldots,\alpha_n\geq 0$.
}\end{enumerate}
\end{lemma}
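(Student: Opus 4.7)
The strategy exploits the product structure of $l$-rectilinear sets. By definition $A = \Pi_l(A) \times (0,1)^{n-l}$, so for any $y^* \in \Pi_l(A)$ the slice $\{y^*\} \times (0,1)^{n-l}$ lies in $A$. In particular the last $n-l$ coordinates of a point of $A$ move independently in $(0,1)$ and can be sent to $0^+$ without leaving $A$; along such a slice, $y^\alpha$ is a positive constant times $\prod_{j > l} y_j^{\alpha_j}$, which is why only the exponents $\alpha_{l+1}, \ldots, \alpha_n$ should matter.

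For part 1, I would argue by contraposition. Assuming $A$ is nonempty (otherwise there is nothing to prove), suppose $\alpha_i < 0$ for some $i \in \{l+1,\ldots,n\}$. Pick $y^* \in \Pi_l(A)$, fix $y_j = 1/2$ for every $j \in \{l+1,\ldots,n\} \setminus \{i\}$, and let $y_i$ vary in $(0,1)$. The resulting points lie in $A$, and along this one-parameter family $y^\alpha$ is a positive constant times $y_i^{\alpha_i}$, which tends to $+\infty$ as $y_i \to 0^+$. This contradicts the assumed boundedness of $\{y^\alpha : y \in A\}$.

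For part 2 the same idea works once the $z$-coordinate is handled. Again suppose $\alpha_i < 0$ for some $i \in \{l+1,\ldots,n\}$, and let $y$ vary as above. For each such $y$ one has $a(y) < 1$, so setting
\[
z_y := \tfrac{1}{2}\bigl(\max\{a(y),\, 1/2\} + 1\bigr) \in [3/4,\, 1)
\]
gives $a(y) < z_y < 1$, so $(y,z_y) \in B$. Since $z_y$ lies in the fixed compact interval $[3/4, 1]$, the factor $z_y^\beta$ is bounded above and below by positive constants depending only on $\beta$, so $|y^\alpha z_y^\beta|$ still grows like $y_i^{\alpha_i} \to +\infty$, giving the desired contradiction. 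The only real point of care is this choice of $z$ in a range where $z^\beta$ is harmless, which the inequality $a(y) < 1$ immediately permits; otherwise the argument is a one-variable observation applied slicewise.
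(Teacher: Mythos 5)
Your proof is correct and takes essentially the same approach as the paper: part~1 by sending $y_i\to 0^+$ along a slice $\{y^*\}\times(0,1)^{n-l}\subset A$ to violate boundedness, and part~2 by neutralizing the $z$-factor. The only cosmetic difference is in part~2, where the paper simply observes that $\{y^\alpha:y\in A\}$ lies in the closure of $\{y^\alpha z^\beta:(y,z)\in B\}$ (letting $z\to 1^-$) and invokes part~1, while you re-run the contradiction directly with an explicit choice $z_y\in[3/4,1)$; both are valid and equally elementary.
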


\begin{proof}
Statement 1 is clear.  Statement 2 follows from statement 1 because $\{y^\alpha : y\in A\}$ is in the closure of the set $\{y^\alpha z^\beta : (y,z)\in B\}$, so $\{y^\alpha : y\in A\}$ is bounded if $\{y^\alpha z^\beta : (y,z)\in B\}$ is bounded.
\end{proof}

The following lemma is apparent.

\begin{lemma}\label{lemma:adjustment}
Let $\varphi:A\to\RR$ be a basic rational monomial map over $\RR^m$, where  $A\subset\RR^{m+n}$ and $\varphi(x,y) = c(x)y^\alpha$.
\begin{enumerate}{\setlength{\itemsep}{3pt}
\item
If $A$ is $l$-rectilinear over $\RR^m$ and $\alpha\in\QQ^l\times\NN^{n-l}$, then $c(x)y_{\leq l}^{\alpha_{\leq l}}$ is bounded on $\Pi_{m+l}(A)$, and $\varphi$ is a $(c(x)y_{\leq l}^{\alpha_{\leq l}}, y_l,\ldots,y_n)$-function.

\item
Let $j\in\{1,\ldots,n\}$, and put $y' = (y_{<j},y_{>j})$ and $\alpha' = (\alpha_{<j},\alpha_{>j})$.  If the closure of $\{y_j : (x,y)\in A\}$ is contained in $(0,1]$, then $c(x)(y')^{\alpha'}$ is bounded on $A$, and $\varphi$ is a $(c(x)(y')^{\alpha'},y_j)$-function.
}\end{enumerate}
\end{lemma}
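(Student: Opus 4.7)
The plan is to treat both parts as essentially immediate consequences of the fact that a basic rational monomial map $\varphi$ is bounded on $A$ by definition, combined with elementary control over the factor we wish to separate.

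For part (1), I would first exploit that $A$ is $l$-rectilinear over $\RR^m$, so for every $(x,y_{\leq l}) \in \Pi_{m+l}(A)$ the fiber contains all of $(0,1)^{n-l}$ in the last $n-l$ coordinates. Because $\alpha_{l+1},\ldots,\alpha_n \in \NN$, we have $y_{l+1}^{\alpha_{l+1}}\cdots y_n^{\alpha_n} \to 1$ as $(y_{l+1},\ldots,y_n) \to (1,\ldots,1)^-$, and this quantity lies in $(0,1]$ on $A$. Taking the $y_{>l}$ to approach $1$ along the fiber therefore forces
\[
|c(x)y_{\leq l}^{\alpha_{\leq l}}| \;\leq\; \sup_{(x,y)\in A} |\varphi(x,y)| \;<\; \infty,
\]
which gives the boundedness. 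For the $\psi$-function claim with $\psi(x,y)=(c(x)y_{\leq l}^{\alpha_{\leq l}},y_{l+1},\ldots,y_n)$, I would simply set $F(u,v_{l+1},\ldots,v_n) = u\cdot v_{l+1}^{\alpha_{l+1}}\cdots v_n^{\alpha_n}$; this is a polynomial, hence analytic on all of $\RR^{n-l+1}$, so in particular on the closure of the range of $\psi$, and $\varphi = F\circ\psi$ by construction.

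For part (2), since the closure of $\{y_j : (x,y)\in A\}$ is a compact subset of $(0,1]$, there exists $a>0$ such that $y_j \in [a,1]$ throughout $A$. Consequently $y_j^{\alpha_j}$ is bounded and bounded away from $0$, and the identity $c(x)(y')^{\alpha'} = \varphi(x,y)/y_j^{\alpha_j}$ together with the boundedness of $\varphi$ yields the boundedness of $c(x)(y')^{\alpha'}$ on $A$. Taking $\psi(x,y) = (c(x)(y')^{\alpha'},y_j)$ and $F(u,v) = u\cdot v^{\alpha_j}$, the function $F$ is analytic on any open set of the form (bounded interval)$\times U$ with $U\Subset(0,\infty)$, and hence on the closure of the range of $\psi$, which gives $\varphi = F\circ\psi$ as required.

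Since both arguments are just boundedness estimates together with the observation that the leftover factor is a single monomial term of a form that is analytic on a neighborhood of the closure of the range, there is really no main obstacle — which matches the paper's preceding remark that \textquotedblleft the following lemma is apparent.\textquotedblright\ The only minor care needed is distinguishing part (1), where the $\NN$-valued exponents allow $y_{>l}\to 1$ without blowup, from part (2), where positivity of the lower bound on $y_j$ is what allows the single (possibly negative) rational exponent $\alpha_j$ to be absorbed into an analytic $F$.
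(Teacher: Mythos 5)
Your proof is correct and fills in exactly the estimates the authors have in mind when they declare the lemma ``apparent'' (the paper provides no written proof). Both parts reduce to the boundedness built into the definition of a rational monomial map plus a suitable analytic $F$, and your distinction between part (1), where $\NN$-valued exponents on $y_{>l}$ allow $y_{>l}\to 1$ to extract boundedness and give a polynomial $F$, and part (2), where the positive lower bound on $y_j$ keeps $v^{\alpha_j}$ analytic, is precisely the right observation.
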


The proof of Proposition \ref{prop:subRect} will use two types of constructions, called pullback and pushforward constructions, to achieve the desired pullback and pushforward properties.

\begin{definition}\label{def:pullback}
Suppose we are given a basic rational monomial map $\varphi:A\to\RR^M$ over $\RR^m$, where $A\subset\RR^{m+n}$ is a cell over $\RR^m$.  A {\bf\emph{pullback construction for $\varphi$}} consists of a subanalytic map $F:B\to A$ and a basic rational monomial map $\psi:B\to\RR^N$ over $\RR^m$, diagrammed as follows,
\[
\xymatrix{
*+[r]{B} \ar@{.>}[r]^-{F} \ar@{.>}[d]^-{\psi}
    & *+[r]{A} \ar[d]^-{\varphi}
\\
*+[r]{\RR^N}
    & *+[r]{\RR^M\,\, ,}
}
\]
where $B\subset\RR^{m+n}$ is a cell over $\RR^m$, $F:B\to F(B)$ is an analytic isomorphism over $\RR^m$, $\det\PD{}{F}{y}$ and the components of $F$ are $\psi$-prepared, and $\varphi\circ F$ is a $\psi$-function.
\end{definition}

Observe that these properties ensure that if $h$ is any $\varphi$-prepared function, then $h\circ F$ is $\psi$-prepared.

We will use the six types of pullback constructions listed below, where
\begin{equation}\label{eq:prepFormSet}
\Pi_{m+j}(A) = \{(x,y_{\leq j}) : (x,y_{<j})\in\Pi_{m+j-1}(A), a_j(x,y_{<j}) < y_j < b_j(x,y_{<j})\}
\end{equation}
for each $j\in\{1,\ldots,n\}$.  When defining $F$ below, we only specify its action on coordinates on which it acts nontrivially.
\begin{enumerate}{\setlength{\itemsep}{2pt}
\item
\emph{Adjustment}: This means that $F$ is the identity map (but $\psi$ may be different from $\varphi$).

\item
\emph{Restriction}: This means that $F$ is an inclusion map and $\psi = \varphi\Restr{B}$.

\item
\emph{Power Substitution in $y_j$}: This means that $F$ sends $y_j\mapsto y_{j}^{p}$ for some positive integer $p$, and $\psi = \varphi\circ F$.

\item
\emph{Blowup in $y_j$}: This means that we are assuming that $\varphi_{\leq j}$ is prepared over $\RR^{m+j-1}$, that $F$ sends $y_j\mapsto y_jb_j(x,y_{<j})$, and that $\psi$ is the pullback of $\bar{\varphi}$ by the transformation sending $y_j\mapsto y_j \widehat{b}(x) y_{<j}^{\beta}$, where $b_j(x,y_{<j}) = \widehat{b}(x) y_{<j}^{\beta}u(x,y_{<j})$ is the $\varphi_{<j}$-prepared form of $b_j$ and $\bar{\varphi}$ is the natural extension of $\varphi$ to $\Pi_m(A)\times(0,\infty)^n$.

\item
\emph{Flip in $y_j$}: This means we are assuming that $\varphi$ is prepared over $\RR^{m+j-1}$, that the closure of $\{y_j : (x,y)\in A\}$ is contained in $(0,1]$, that $b_j=1$, and that $\varphi$ is of the form
\begin{equation}\label{eq:flipPullback}
\varphi(x,y) = (\varphi_{<j}(x,y_{<j}), y_j, \varphi_{>j}(x,y_{<j},y_{>j}));
\end{equation}
$F$ is the transformation sending $y_j\mapsto 1-y_j$, and $\psi$ is defined by the formula on the right side of \eqref{eq:flipPullback}, but on $B$ rather than on $A$.

\item
\emph{Swap in $y_i$ and $y_j$}:
This means that $F$ is the transformation sending $(y_i,y_j) \mapsto (y_j,y_i)$ and $\psi = \varphi\circ F$, provided that the resulting set $B$ is still a cell over $\RR^m$.
}\end{enumerate}

\begin{remark}\label{rmk:flip}
Note that when $(F,\psi)$ is a flip in $y_j$, we always assume that $\varphi$ is prepared over $\RR^{m+j-1}$ and that the closure of $\{y_j : (x,y)\in A\}$ is contained in $(0,1]$.  We may therefore additionally assume that for each $i\in\{j+1,\ldots,n\}$, the monomials in $y_{<i}$ occurring outside the units in the prepared forms of $a_i$, $b_i$ and $b_i-a_i$ do not contain any nonzero powers of $y_j$, because any nonzero powers of $y_j$ may be included in the units.
\end{remark}

\begin{definition}\label{def:pushforward}
Suppose that we are given a basic rational monomial map $\psi:B\to\RR^N$ over $\RR^m$ and a subanalytic analytic isomorphism $F:B\to A$ over $\RR^m$, where $A,B\subset\RR^{m+n}$.  A {\bf\emph{pushforward construction for $\psi$ and $F$}} is a basic rational monomial map $\varphi:A\to\RR^M$ over $\RR^m$, diagrammed as follows,
\[
\xymatrix{
*+[r]{B} \ar[r]^-{F} \ar[d]^-{\psi}
    & *+[r]{A} \ar@{.>}[d]^-{\varphi}
\\
*+[r]{\RR^N}
    & *+[r]{\RR^M\,\, ,}
}
\]
where the components of $F^{-1}$ are $\varphi$-prepared and $\psi\circ F^{-1}$ is a $\varphi$-function.
\end{definition}

Observe that these properties ensure that if $h$ is any $\psi$-prepared function, then $h\circ F^{-1}$ is $\varphi$-prepared.

If $F:B\to A$ is a map from any one of the six types of pullback constructions described above, $\psi':B'\to\RR^{N'}$ is a basic rational monomial map over $\RR^m$ with $B'\subset B$, and $A' = F(B')$, then the maps $F\Restr{B'}:B'\to A'$ and $\psi'$ have an obvious pushforward construction $\varphi':A'\to\RR^{M'}$, provided that when $F$ is a flip in $y_j$, the map $\psi'$ is of the form $\psi'(x,y) = (\psi'_{<j}(x,y_{<j}),y_j,\psi'_{>j}(x,y_{<j},y_{>j}))$.

\begin{proof}[Proof of Proposition \ref{prop:subRect}]
Let $\F$ be a finite set of subanalytic functions on $D\subset\RR^{m+n}$.  Apply Proposition \ref{prop:subPrep} to $\F$, and focus on one rational monomial map $\varphi:A\to\RR^M$ over $\RR^m$ that this gives for which $A$ is open over $\RR^m$.  Thus $\varphi$ is prepared, and each function in $\F$ restricts to a $\varphi$-prepared function on $A$.  Let $\theta$ be the center of $\varphi$.  We will first construct finitely many sequences of maps diagrammed as follows,
\begin{equation}\label{eq:pullback}
\xymatrix{
B = A_k \ar[r]^-{F_k} \ar@<2ex>[d]^-{\varphi^{[k]} = \psi}
    & *+[r]{A_{k-1}} \ar[r] \ar[d]^-{\varphi^{[k-1]}}
    & \cdots \ar[r]
    & *+[r]{A_1} \ar[r]^-{F_1} \ar[d]^-{\varphi^{[1]}}
    & *+[r]{A_0 = A_\theta} \ar[rr]^-{T_{\theta}^{-1}}
        \ar[d]^-{\varphi^{[0]} = \varphi_\theta}
    &
    & *+[r]{A} \ar[d]^-{\varphi}
\\
\RR^N = \RR^{M_k}
    & *+[r]{\RR^{M_{k-1}}}
    &
    & *+[r]{\RR^{M_1}}
    & *+[r]{\RR^{M_0} = \RR^M}
    &
    & *+[r]{\RR^M\,\, ,}
}
\end{equation}
where for each $i\in\{1,\ldots,k\}$ the maps $F_i$ and $\varphi^{[i]}$ are a pullback construction for $\varphi^{[i-1]}$ of one of the six types listed above, the map $\psi$ is rectilinear over $\RR^m$, and the ranges of the maps $F:B\to A$ given by $F = T^{-1}_{\theta}\circ F_1\circ\cdots\circ F_k$ for all such sequences \eqref{eq:pullback} constructed form an open partition of $A$ over $\RR^m$.  Doing this proves the pullback property.  We will construct \eqref{eq:pullback} to also have the following property.
\begin{equation}\label{eq:flip}
\text{\parbox{5in}{
For each $j\in\{1,\ldots,n\}$, at most one map $F_i$ in \eqref{eq:pullback} is a flip in $y_j$.
}}
\end{equation}

Assuming we can construct \eqref{eq:pullback} as such, to prove the pushforward property it suffices to define $A' = F(B)$, to inductively define $B_k = B$ and $B_{i-1} = F_i(B_i)$ for each $i\in\{1,\ldots,k\}$, and to show that we can construct maps diagrammed as follows,
\begin{equation}\label{eq:pushforward}
\xymatrix{
B = B_k \ar[r]^-{F_k} \ar@<2ex>[d]^-{\psi^{[k]} = \psi}
    & *+[r]{B_{k-1}} \ar[r] \ar[d]^-{\psi^{[k-1]}}
    & \cdots \ar[r]
    & *+[r]{B_1} \ar[r]^-{F_1\Restr{B_1}} \ar[d]^-{\psi^{[1]}}
    & *+[r]{B_0} \ar[r]^-{T_{\theta}^{-1}\Restr{B_0}}
        \ar[d]^-{\psi^{[0]}}
    & *+[r]{A'} \ar[d]^-{\varphi' = \psi^{[0]}\circ T_\theta\Restr{A'}}
\\
\RR^N = \RR^{N_k}
    & *+[r]{\RR^{N_{k-1}}}
    &
    & *+[r]{\RR^{N_1}}
    & *+[r]{\RR^{N_0}}
    & *+[r]{\RR^{M'} = \RR^{N_0}\,\, ,}
}
\end{equation}
where for each $i\in\{1,\ldots,k\}$, $\psi^{[i-1]}$ is a pushforward construction for $F_i\Restr{B_i}:B_i\to B_{i-1}$ and $\psi^{[i]}$.  (Thus the map $\varphi:A\to\RR^M$ in the statement of the theorem is being denoted by $\varphi':A'\to\RR^{M'}$ here in the proof.)
These pushforward constructions will be possible because if a map $F_i$ in \eqref{eq:pullback} is a flip in $y_j$, we can ensure that $\psi^{[i]}$ is of the form \eqref{eq:flipPullback}.  Indeed, from among the six types of pullback and pushforward constructions we use, only blowups in one of the variables $y_j,\ldots,y_n$ can possibly destroy the form \eqref{eq:flipPullback}. So Remark \ref{rmk:flip} and \eqref{eq:flip} imply that, in fact, all the maps $\varphi^{[i]},\ldots,\varphi^{[k]}$ and $\psi^{[k]},\ldots,\psi^{[i]}$ are of the form \eqref{eq:flipPullback}.

So it remains to construct the sequences \eqref{eq:pullback}.  This is done by an induction, and to simplify notation we will write $\varphi:A\to\RR^M$ instead of the more cumbersome $\varphi^{[i]}:A_i\to\RR^{M_i}$.  (So we are now assuming that $\varphi$ is basic.)  Let $d\in\{1,\ldots,n\}$, and inductively assume that $\varphi_{<d}$ is $l$-rectilinear over $\RR^m$ for some $l\in\{0,\ldots,d-1\}$ and that $\varphi$ is prepared over $\RR^{m+d-1}$.   Thus $A$ is a cell over $\RR^m$, so we use the notation \eqref{eq:prepFormSet}.  To complete the construction, it suffices to show that after taking an open partition of $A$ over $\RR^m$ and pulling back $\varphi$, we may reduce to the case that $\varphi_{\leq d}$ is rectilinear and $\varphi$ is prepared over $\RR^{m+d}$.

By pulling back by a blowup in $y_d$ and then by power substitutions in $y_{l+1},\ldots,y_d$, and using Lemma \ref{lemma:rectBddMon}, we may assume that $b_d = 1$ and that all the powers of $y_{l+1},\ldots,y_d$ occurring in the components of $\varphi$ are natural numbers, and when $a_d > 0$, that all the powers of $y_{l+1},\ldots,y_{d-1}$ in the monomials occurring outside the units in the $\varphi_{<d}$-prepared forms of $a_d$ and $1-a_d$ are also natural numbers.  There are two cases that can be handled very easily.
\begin{caselist}
\item
$a_d = 0$.
\\
In this case, $\Pi_{m+d}(A)$ is $l$-rectilinear, so we are done after using Lemma \ref{lemma:adjustment}.1 to adjust $\varphi$.

\item
The closure of $\{y_d : (x,y)\in A\}$ is contained in $(0,1]$.
\\
In this case, use Lemma \ref{lemma:adjustment}.2 to adjust $\varphi$ to assume that $\varphi$ is of the form \eqref{eq:flipPullback}, and then apply a flip in $y_d$ to reduce to Case 1.
\end{caselist}
(Note that if we reduce to either of these two cases, we need not require that $b_d=1$ or that the requisite powers of $y_{l+1},\ldots,y_d$ are natural numbers, because the blowup and power substitutions mentioned just prior to these cases can be applied if needed.)  So assume that $a_d > 0$, and write
\[
a_d(x,y_{<d}) = \widehat{a}(x) y_{<d}^{\alpha} u(x,y_{<d})
\]
for some analytic subanalytic function $\widehat{a}$, tuple of rational numbers $\alpha = (\alpha_1,\ldots,\alpha_{d-1})$, and $\varphi_{<d}$-unit $u$.  We proceed by induction on $|\supp(\alpha_{>l})|$, the cardinality of the set $\supp(\alpha_{>l})$.

Suppose that $\supp(\alpha_{>l})$ is empty, and write $y_{\leq l}^{\alpha}$ instead of $y_{<d}^{\alpha}$.  Fix a constant $C$ that is greater than the supremum of the range of $u$.  Construct a partition of $\Pi_{m+l}(A)$ into cells over $\RR^m$ compatible with the condition $\widehat{a}(x)y_{\leq l}^{\alpha}C = 1$.  By considering the restriction of $\varphi$ to $A\cap(B\times\RR^{n-l})$ for each cell $B$ from this partition that is open over $\RR^m$, we may assume that either $\widehat{a}(x)y_{\leq l}^{\alpha}C > 1$ on $A$ or $\widehat{a}(x)y_{\leq l}^{\alpha}C < 1$ on $A$.  If $\widehat{a}(x)y_{\leq l}^{\alpha}C > 1$ on $A$, then $a_d$ is bounded below by a positive constant, and we are in Case 2.  So assume that $\widehat{a}(x)y_{\leq l}^{\alpha}C < 1$ on $A$.  Consider the two sets
\[
\{(x,y)\in A : a_d(x,y_{<d}) < y_d < \widehat{a}(x)y_{\leq l}^{\alpha}C\}
\quad\text{and}\quad
\{(x,y)\in A : \widehat{a}(x)y_{\leq l}^{\alpha}C < y_d < 1\}.
\]
By restricting $\varphi$ to the first set and then pulling back by a blowup in $y_d$, we reduce to Case 2.  By restricting $\varphi$ to the second set and then swapping the coordinates $y_{l+1}$ and $y_d$, we reduce to the case that $\varphi_{\leq d}$ is $(l+1)$-rectilinear and $\varphi$ is prepared over $\RR^{m+d}$, and we are done.  This completes the proof when $\supp(\alpha_{>l})$ is empty.

Now suppose that $\supp(\alpha_{>l})$ is nonempty.  By pulling back by a swap, we may assume that $l+1\in\supp(\alpha_{>l})$.  By pulling back by the power substitution $y_d\mapsto y_{d}^{\alpha_{l+1}}$, we may also assume that $\alpha_{l+1} = 1$.  Let $y'$ and $\alpha'$ be the tuples indexed by $\{1,\ldots,d-1\}\setminus\{l+1\}$ that are respectively obtained from $y_{<d}$ and $\alpha$ by omitting their $(l+1)$-th components, and write $y_{<d} = (y',y_{l+1})$; thus $\alpha_{>l} = (1,\alpha_{>l+1})$ and $\alpha'_{>l} = \alpha_{>l+1}$.  Fix a constant $C > 1$ that is greater than the supremum of the range of $\widehat{a}(x)(y')^{\alpha'}u(x,y',y_{l+1})$; this may may done because $\widehat{a}(x) (y')^{\alpha'} y_{l+1}$ is bounded (since it equals $a_d(x,y_{<d})/u(x,y_{<d})$) and $y_{l+1}$ may freely approach $1$ independently of the other variables.  Thus
\[
a_d(x,y',y_{l+1}) = \widehat{a}(x)(y')^{\alpha'} y_{l+1} u(x,y',y_{l+1}) < C y_{l+1}
\]
on $A$.  Consider the three sets,
\[
\{(x,y)\in A : C^{-1} < y_{l+1} < 1\},
\]
\[
\{(x,y)\in A : \text{$0 < y_{l+1} < C^{-1}$ and $a(x,y',y_{l+1}) < y_d < C y_{l+1}$}\}
\]
and
\[
\{(x,y)\in A : \text{$0 < y_{l+1} < C^{-1}$ and $C y_{l+1} < y_d < 1$}\}.
\]
By restricting $\varphi$ to the first set, we reduce to the case that $\varphi_{\leq d}$ is $(l+1)$-rectilinear, and we are done by the induction hypothesis since $|\supp(\alpha_{>l+1})| < |\supp(\alpha_{>l})|$.  If we restrict $\varphi$ to either the second or third set, we may pull back by a blowup in $y_{l+1}$ to assume that $C = 1$.  On the second set, we may then pull back by a blowup in $y_d$, and we are done by the induction hypothesis since $|\supp(\alpha'_{>l})| < |\supp(\alpha_{>l})|$.  The third set can also be written as $\{(x,y)\in A : 0 < y_d < 1, 0 < y_{l+1} < y_d\}$, so we may reduce to Case 1 by swapping the coordinates $y_{l+1}$ and $y_d$.
\end{proof}

\section{Rectilinear Preparation of Constructible Functions}
\label{s:constrRect}

This section proves the following proposition, which is a preparation result for constructible functions in transformed coordinates on rectilinear sets.

\begin{proposition}\label{prop:constrRect}
Let $\F$ be a finite set of constructible functions on a subanalytic set $D\subset\RR^{m+n}$.
There exists an open partition $\A$ of $D$ over $\RR^m$ such that for each $A\in\A$ there exist a subanalytic analytic isomorphism $F=(F_1,\ldots,F_{m+n}):B\to A$ over $\RR^m$, rational monomial maps $\varphi$ on $A$ and $\psi$ on $B$ over $\RR^m$, and $l\in\{0,\ldots,n\}$ with the following properties.
\begin{enumerate}{\setlength{\itemsep}{3pt}
\item
Pullback Property:
The map $\psi$ is $l$-rectilinear over $\RR^m$, $\det\PD{}{F}{y}$ is $\psi$-prepared,
and for every $f\in\F$ we may write $f\circ F$ in the form
\begin{equation}\label{eq:constrRectMainSum}
f\circ F(x,y) = \sum_{s\in S} (\log y_{>l})^s \left(\sum_{r\in R_{s}^{\CR}} y_{>l}^{r} f_{r,s}(x,y_{\leq l}) + \sum_{r\in R^{\NC}_{s}} y_{>l}^{r} f_{r,s}(x,y) \right)
\end{equation}
on $B$, where the sets $S\subset\NN^{n-l}$ and $R_{s}^{\CR},R_{s}^{\NC}\subset\ZZ^{n-l}$ are finite with $R^{\CR}_{s}\cap R^{\NC}_{s} = \emptyset$ for each $s$, and each function $f_{r,s}$ may be written as a finite sum
\begin{equation}\label{eq:constrRectSubSum}
\left\{\begin{array}{rcl}
f_{r,s}(x,y_{\leq l})
    & = &
    \displaystyle
    \sum_{j} g_j(x) y_{\leq l}^{\alpha_j}(\log y_{\leq l})^{\beta_j} h_j(x,y_{\leq l}),
    \quad
    \text{if $r\in R^{\CR}_{s}$,}
\vspace*{10pt}
\\
f_{r,s}(x,y)
    & = &
    \displaystyle
    \sum_{j} g_j(x) y_{\leq l}^{\alpha_j}(\log y_{\leq l})^{\beta_j} h_j(x,y),
    \quad
    \text{if $r\in R^{\NC}_{s}$,}
\end{array}\right.
\end{equation}
where $g_j\in\C(\Pi_m(A))$, $\alpha_j\in\ZZ^l$, $\beta_j\in\NN^l$, $h_j$ is either a $\psi_{\leq l}$-function or a $\psi$-function according to whether $r$ is in $R^{\CR}_{s}$ or $R^{\NC}_{s}$, and the following holds:
\begin{equation}\label{eq:CR}
\left\{
\text{\parbox{5.5in}{
For each $s\in S$, $r'\in R^{\NC}_{s}$ and $(x,y_{\leq l})\in\Pi_{m+l}(B)$, if $f_{r',s}(x,y_{\leq l}, y_{>l})\neq 0$ for some $y_{>l}\in(0,1)^{n-l}$, then $f_{r,s}(x,y_{\leq l})\neq 0$ for some $r\in R^{\CR}_{s}$ with $r\leq r'$.
}}
\right.
\end{equation}

\item
Pushforward Property:
The components of $F^{-1}$ are $\varphi$-prepared, and $\psi\circ F^{-1}$ is a $\varphi$-function.
}\end{enumerate}
\end{proposition}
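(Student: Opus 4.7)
The plan is to start by applying Proposition \ref{prop:subRect} in the subanalytic setting. Fix a finite set $\F'$ of subanalytic functions such that every $f\in\F$ lies in the $\RR$-algebra generated by $\F'\cup\{\log|g|:g\in\F'\}$, and enlarge $\F'$ so that on each resulting piece every $g\in\F'$ has constant sign. Applying Proposition \ref{prop:subRect} to $\F'$ produces the open partition $\A$ of $D$ over $\RR^m$ and, for each $A\in\A$, a subanalytic isomorphism $F:B\to A$, rational monomial maps $\psi$ on $B$ (with $B$ being $l$-rectilinear) and $\varphi$ on $A$, with $\det\PD{}{F}{y}$ and every $g\circ F$ being $\psi$-prepared, together with the pushforward property. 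The pushforward condition and the $\psi$-preparation of $\det\PD{}{F}{y}$ for the constructible statement are then inherited directly from the subanalytic one, so the remaining task is to derive \eqref{eq:constrRectMainSum}--\eqref{eq:CR} for each $f\in\F$.

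For the preliminary expansion, I would substitute $g\circ F(x,y)=a_{g}(x)y^{\alpha_{g}}u_{g}(x,y)$ into each polynomial expression for $f$. Using $\log|g\circ F|=\log|a_{g}(x)|+\alpha_{g}\cdot\log y+\log|u_{g}(x,y)|$ and the fact that $\log|u_{g}|$ is itself a $\psi$-function (by sign-definiteness of $u_g$), and performing a power substitution in the $y$-variables to clear denominators so all exponents lie in $\ZZ^{n}$, I would expand and group by the pair $(r,s)=(\alpha_{>l},\beta_{>l})$ to obtain
\[
f\circ F(x,y)=\sum_{s\in S}(\log y_{>l})^{s}\sum_{r\in R_{s}}y_{>l}^{r}\,g_{r,s}(x,y),
\]
where each $g_{r,s}(x,y)=\sum_{j}g_{j}(x)y_{\leq l}^{\alpha_{j}}(\log y_{\leq l})^{\beta_{j}}h_{j}(x,y)$ with $g_{j}\in\C(\Pi_{m}(A))$ and each $h_{j}$ a $\psi$-function.

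The main step is to refine this into the $\CR$/$\NC$ decomposition satisfying \eqref{eq:CR}. For each $\psi$-function $h_{j}$, Taylor expand in $y_{>l}$ at the origin and write $h_{j}(x,y)=\sum_{|\rho|\leq N}y_{>l}^{\rho}h_{j,\rho}(x,y_{\leq l})+\sum_{|\rho|=N+1}y_{>l}^{\rho}R_{j,\rho}(x,y)$, where the $h_{j,\rho}$ are $\psi_{\leq l}$-functions and the $R_{j,\rho}$ are $\psi$-functions; substitute back to distribute the explicit Taylor coefficients into $R_{s}^{\CR}$ at shifted exponents $r+\rho$ and the remainders into $R_{s}^{\NC}$ at exponents $r+\rho$ with $|\rho|=N+1$. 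The hard part will be choosing $N$ uniformly and verifying \eqref{eq:CR} in the presence of cancellations among the different $h_{j}$-contributions landing at a shared shifted exponent. Uniformity of $N$ is obtained from a coherence argument: because each $\psi$-function extends analytically to a neighborhood of the compact closure of the $\psi$-range, Noetherianity of the sheaf of analytic functions supplies a finite $N$ beyond which vanishing of all Taylor coefficients in $y_{>l}$ at orders $\leq N$ forces identical vanishing in $y_{>l}$. Cancellations are controlled by applying Theorem \ref{thm:vanish} to each combined $\NC$ coefficient (viewed as a constructible function of $y_{>l}$ with parameters $(x,y_{\leq l})$) and to each combined $\CR$ coefficient, then refining the partition $\A$ along the resulting zero loci on $\Pi_{m+l}(B)$ and reapplying Proposition \ref{prop:subRect} on each subpiece to restore the cell and rectilinear structure; on each resulting subpiece the vanishing pattern of every Taylor coefficient and remainder becomes constant, and a short combinatorial argument on the componentwise partial order in $\NN^{n-l}$, combined with the coherence bound, yields \eqref{eq:CR}, with further partitioning along exponent-support hyperplanes used if needed to align the $\CR$ witnesses with the required inequality $r\leq r'$.
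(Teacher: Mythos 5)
Your opening steps (applying Proposition \ref{prop:subRect} to a finite subanalytic family generating $\F$, expanding logarithms of prepared monomials, using power substitutions to force integer exponents, and grouping by $(r,s)$) match the paper's proof. But the crucial step of achieving \eqref{eq:CR} has a genuine gap. You propose to Taylor expand each $\psi$-function in $y_{>l}$ to a uniform total order $N$, assign the explicit coefficients to $R^{\CR}_s$ and the order-$(N+1)$ remainders to $R^{\NC}_s$, and then \emph{refine the partition $\A$ along the zero loci of the combined coefficient functions}. This refinement cannot work as stated: the combined coefficients carry constructible factors $g_j(x)(\log y_{\leq l})^{\beta_j}$, so their zero loci on $\Pi_{m+l}(B)$ are not subanalytic sets, and cutting along them destroys the cell and rectilinear structure that the proposition requires (in particular one cannot ``reapply Proposition \ref{prop:subRect}'' on such pieces, since that proposition requires subanalytic input). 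Moreover, a single total-degree cutoff does not yield the required disjointness $R^{\CR}_s\cap R^{\NC}_s=\emptyset$: shifted exponents $r+\rho$ with $|\rho|\le N$ (slated for $R^{\CR}_s$) and $r'+\rho'$ with $|\rho'|=N+1$ (slated for $R^{\NC}_s$) can coincide for different initial exponents $r,r'$. You also gloss over the need to restrict $y_{>l}$ to a small polydisc so that the $\psi$-functions admit single convergent power series expansions; the paper handles this by splitting off the regions $\{y_j>\epsilon\}$ and inducting on $n-l$.

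The way out — which is what the paper does — is to observe that condition \eqref{eq:CR} is a \emph{pointwise} statement allowed to depend on $(x,y_{\leq l})$, so no refinement of $\A$ is needed at all, and Theorem \ref{thm:vanish} is not used here (it enters only in Section \ref{s:mainThmProofs}). The paper abstracts the constructible pieces $g_j(x)$ and $\log y_{\leq l}$ into auxiliary polynomial variables $Z_s$ and $Y$, so that each coefficient $G_{s,\gamma}$ of $y_{>l}^{\gamma}$ becomes a polynomial in $(Y,Z_s,y_{\leq l})$ with analytic coefficients depending on a parameter $X$ ranging over a \emph{compact} set (the closure of the range of $\psi_{\leq l}$). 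A parameterized Dickson's lemma (Lemma \ref{lemma:Dickson}, proved via topological Noetherianity of $\O_E[y]$) then shows that the union over all parameter values of the sets of \emph{minimal} nonvanishing exponents in $\NN^{n-l}$ is finite; this finite antichain (and its saturation) becomes $M^{\CR}$. The noncritical exponents and remainder functions come from a combinatorial partition of $[M^{\CR}]\setminus M^{\CR}$ by Lemma \ref{lemma:partSupp}, and the $\CR$/$\NC$ dichotomy of \eqref{eq:CR} then follows directly from the poset structure (Lemma \ref{lemma:analSum}). Your Noetherianity intuition is in the right family of ideas, but the uniform bound must be taken for the combined coefficients as polynomial expressions over a compact parameter space (not for the individual $\psi$-functions $h_j$), and it must come out as a finite antichain of minimal supports rather than a total-degree cutoff.
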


The superscripts ``$\CR$'' and ``$\NC$'' in the notation $R^{\CR}_{s}$ and $R^{\NC}_{s}$ stand for {\bf\emph{critical}} and {\bf\emph{noncritical}}.  In Section \ref{s:mainThmProofs} we will use \eqref{eq:CR} to see that the $L^p$-classes of $f(x,\cdot)$ are determined by which of the terms $f_{r,s}(x,\cdot)$ with $r\in R_{s}^{\CR}$ are identically zero, so in this sense these are the ``critical'' terms.

In the degenerate case of $l = n$, \eqref{eq:constrRectMainSum} and \eqref{eq:constrRectSubSum} simply mean that
\[
f\circ F(x,y) = \sum_{j} g_j(x) y^{\alpha_j}(\log y)^{\beta_j} h_j(x,y)
\]
for some constructible functions $g_j$, tuples $\alpha_j\in\ZZ^n$ and $\beta_j\in\NN^n$, and $\psi$-functions $h_j$.  To see this, note that if $f\circ F$ is nonzero and $l=n$, then $S = \NN^0 = \{0\}$ and $R_{0}^{\CR},R_{0}^{\NC}\subset\ZZ^0 = \{0\}$ with $R_{0}^{\CR}\cap R_{0}^{\NC} = \emptyset$, so $R_{0}^{\CR} = \{0\}$ and $R_{0}^{\NC} = \emptyset$ by \eqref{eq:CR}.

\begin{notation}
For any set $E\subset\RR^m$, let $\O_E$ denote the ring of all analytic germs on $E$, and let $\O_E[y]$ denote the ring of all polynomials in $y = (y_1,\ldots,y_n)$ with coefficients in $\O_E$.  Each member of $\O_E[y]$ is an equivalence class of functions defined on neighborhoods of $E\times\RR^n$ in $\RR^{m+n}$, and hence defines a function on $E\times\RR^n$.  For each $\F\subset\O_E[y]$, define the variety of $\F$ by
\[
\VV(\F) = \{(x,y)\in E\times\RR^n : \text{$f(x,y) = 0$ for all $f\in\F$}\}.
\]
\end{notation}

For each $x\in\RR^m$, the ring $\O_{\{x\}}$ is Noetherian, so $\O_{\{x\}}[y]$ is as well.  This implies that when $E$ is compact, the varieties of $\O_E[y]$ form the collection of closed subsets of a Noetherian topological space on $E\times\RR^n$; in other words, for any $\F\subset\O_E[y]$ there exists a finite $\F'\subset\F$ such that $\VV(\F') = \VV(\F)$.

\begin{notation}
We partially order $\NN^k$ by defining $\alpha \leq \beta$ if and only if $\alpha_j\leq \beta_j$ for all $j\in\{1,\ldots,k\}$, where $\alpha = (\alpha_1,\ldots,\alpha_k)$ and $\beta = (\beta_1,\ldots,\beta_k)$.  For any $\alpha\in\NN^k$ write $[\alpha] = \{\beta\in\NN^k : \beta\geq\alpha\}$, and for any $A\subset\NN^k$ write $[A] = \bigcup_{\alpha\in A}[\alpha]$ for the upward closure of $A$.  If $A\subset\NN^k$ is nonempty, define $\min A$ to be the set of minimal members of $A$, and define $\min\emptyset = \emptyset$.
\end{notation}

Dickson's lemma states that $\min A$ is finite for every $A\subset\NN^k$.  The following is a parameterized version of Dickson's lemma.

\begin{lemma}\label{lemma:Dickson}
Let $E\subset\RR^m$ be compact and $\{f_\alpha\}_{\alpha\in\NN^k} \subset\O_E[y]$.  Then the set
\begin{equation}\label{eq:Dickson}
\bigcup_{(x,y)\in E\times\RR^n} \min\{\alpha\in\NN^k : f_\alpha(x,y)\neq 0\}
\end{equation}
is finite.
\end{lemma}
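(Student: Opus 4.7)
The plan is to argue by contradiction, combining the classical (unparameterized) Dickson's lemma with the Noetherianity of the space of varieties of $\O_E[y]$ on $E\times\RR^n$ that was recalled just before the statement. So I would suppose that the set in \eqref{eq:Dickson}, call it $M$, is infinite, and derive an infinite strictly descending chain of such varieties.

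First I would invoke the standard Dickson's lemma, which says that $(\NN^k,\leq)$ is a well-partial-order; equivalently, every infinite subset of $\NN^k$ contains an infinite strictly increasing chain. Applied to $M$, this yields a sequence $\alpha_1 < \alpha_2 < \alpha_3 < \cdots$ of distinct elements of $M$. Next, having fixed this chain, for each $i$ I would use the membership $\alpha_i\in M$ to choose a witness point $(x_i,y_i)\in E\times\RR^n$ realizing $\alpha_i$ as a minimal element of $\{\alpha : f_\alpha(x_i,y_i)\neq 0\}$, that is, $f_{\alpha_i}(x_i,y_i)\neq 0$ while $f_\beta(x_i,y_i)=0$ for every $\beta<\alpha_i$. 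Since $\alpha_j<\alpha_i$ for all $j<i$, this automatically forces $f_{\alpha_j}(x_i,y_i)=0$ for every $j<i$, while $f_{\alpha_i}(x_i,y_i)\neq 0$.

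Finally, I would set $V_i := \VV(\{f_{\alpha_1},\ldots,f_{\alpha_{i-1}}\})$ in $E\times\RR^n$, so that $V_{i+1}=V_i\cap\VV(f_{\alpha_i})$. By the previous paragraph, $(x_i,y_i)\in V_i\setminus\VV(f_{\alpha_i})$, hence $V_{i+1}\subsetneq V_i$ for every $i$. The resulting infinite strictly descending chain of varieties in $E\times\RR^n$ contradicts the Noetherianity of the topology of varieties of $\O_E[y]$, proving $M$ is finite. The only step requiring any care is ordering the construction correctly — the increasing chain in $M$ must be selected first, and only then the witnesses, so that each new $(x_i,y_i)$ automatically kills all previously chosen $f_{\alpha_j}$; beyond this bookkeeping I do not anticipate any genuine obstacle.
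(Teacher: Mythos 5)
Your proof is correct, and it takes a genuinely different route from the paper's. The paper argues directly by induction on $k$: topological Noetherianity furnishes a $\beta\in\NN^k$ with $\VV(\{f_\alpha\}_{\alpha\leq\beta})=\VV(\{f_\alpha\}_{\alpha\in\NN^k})$, and any minimal element of $\{\alpha:f_\alpha(x,y)\neq 0\}$ must then satisfy $\alpha_i\leq\beta_i$ in some coordinate $i$ (otherwise $(x,y)\in\VV(\{f_\alpha\}_{\alpha\leq\beta})$ would force $f_\alpha(x,y)=0$), which embeds \eqref{eq:Dickson} in a finite union of $(k-1)$-dimensional instances handled by the induction hypothesis. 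You instead argue by contradiction: extract an infinite strictly increasing chain $\alpha_1<\alpha_2<\cdots$ from an allegedly infinite set of minimal elements via the classical Dickson's lemma (which guarantees $\NN^k$ is a well-partial-order, hence every infinite subset contains such a chain), then use each witness $(x_i,y_i)$ to show the varieties $V_i=\VV(\{f_{\alpha_1},\ldots,f_{\alpha_{i-1}}\})$ are strictly nested, violating Noetherianity. Your approach is arguably cleaner in that it treats Dickson's lemma as a black box and makes a single appeal to Noetherianity, with no induction; the paper's approach essentially reproves a parameterized Dickson's lemma from scratch, invoking Noetherianity once per inductive step but giving a direct (non-contradiction) argument. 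Both cost about the same; yours leans more on the classical combinatorial input, while the paper's is more self-contained and mirrors the usual inductive proof of Dickson's lemma itself.
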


\begin{proof}
The proof is by induction on $k$, with the base case of $k = 0$ being trivial.  For the inductive step, use topological Noetherianity to fix $\beta\in\NN^k$ such that $\VV(\{f_\alpha\}_{\alpha\leq\beta}) = \VV(\{f_\alpha\}_{\alpha\in\NN^k})$.  Then \eqref{eq:Dickson} is finite because it is contained in
\begin{equation}\label{eq:Dicksonproof}
\bigcup_{i=1}^{k}\bigcup_{j=0}^{\beta_i} \left(\bigcup_{(x,y)\in E\times\RR^n} \min\{\alpha\in\NN^n : f_\alpha(x,y)\neq 0, \alpha_i = j\}\right),
\end{equation}
and each of the sets in parenthesis in \eqref{eq:Dicksonproof} is finite by the induction hypothesis.
\end{proof}

\begin{lemma}\label{lemma:partSupp}
Let $M\subset\NN^k$ be finite.  Then there exists a finite partition of $[M]\setminus M$ that is compatible with  $\{[\alpha]\}_{\alpha\in M}$ and is such that each member of the partition has a unique minimal member.
\end{lemma}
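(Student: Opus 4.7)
The plan is a two-stage construction: first produce the coarsest partition of $[M]$ that is compatible with $\{[\alpha]\}_{\alpha\in M}$, and then refine each cell of this partition so that it acquires a unique minimum. For each nonempty $N\subseteq M$, I would define the Boolean atom
\[
B_N \,=\, \bigcap_{\alpha\in N}[\alpha]\ \setminus\ \bigcup_{\alpha\in M\setminus N}[\alpha].
\]
Every $\beta\in[M]$ lies in $B_{N(\beta)}$ for $N(\beta) := \{\alpha\in M : \beta\geq\alpha\}$, so the nonempty $B_N$'s form a finite disjoint partition of $[M]$, and each $B_N$ is compatible with $\{[\alpha]\}_{\alpha\in M}$ by construction (for $\alpha\in N$ we have $B_N\subseteq[\alpha]$, and for $\alpha\in M\setminus N$ we have $B_N\cap[\alpha]=\emptyset$). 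Discarding the empty terms of $\{B_N\setminus M\}$ yields a finite compatible partition of $[M]\setminus M$. Since compatibility with $\{[\alpha]\}$ is automatically inherited by any further refinement inside a single $B_N$, it now suffices to partition each nonempty $B_N\setminus M$ into finitely many pieces each having a unique minimal element.

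For that final step I would prove the following standalone combinatorial fact: any subset $A\subseteq\NN^k$ admits a finite partition into pieces each with a unique minimum. Dickson's lemma ensures that $t := |\min A|$ is finite, and I would induct on $t$. The cases $t\leq 1$ are immediate. For $t\geq 2$, pick any $m\in\min A$, set $P := A\cap[m]$ and $A' := A\setminus[m]$, and observe that $m$ is the unique minimum of $P$. The key identity
\[
\min(A\setminus[m]) \,=\, \min A\setminus\{m\}
\]
holds because the elements of $\min A$ form an antichain: no other minimum of $A$ lies in $[m]$, so $\min A\setminus\{m\}$ survives inside $A'$; conversely any minimum $m'$ of $A'$ is also a minimum of $A$, since a hypothetical strictly smaller $\gamma\in A$ would satisfy $\gamma_i\leq m'_i<m_i$ in the coordinate $i$ witnessing $m'\not\geq m$, whence $\gamma\in A'$ and $m'$ would fail minimality in $A'$. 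So $|\min A'|=t-1$, the induction applies, and the combined partition of $A$ has at most $t$ unique-minimum pieces.

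Applying this lemma inside each nonempty $B_N\setminus M$ produces the required refinement of the $B_N$-partition, and the theorem follows. I expect the only non-routine step to be the identity $\min(A\setminus[m])=\min A\setminus\{m\}$, which is the single combinatorial fact driving the induction on $|\min A|$; once it is in place, the rest is set-theoretic bookkeeping, and the pre-partition into the atoms $B_N$ ensures that refining within each atom costs nothing in terms of compatibility.
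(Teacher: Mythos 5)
Your proof is correct, but it takes a genuinely different route from the paper's. The paper constructs the partition explicitly in one shot: setting $\epsilon_i = \max\{\alpha_i : \alpha\in M\}$, it takes all singletons $\{\alpha\}$ with $\alpha$ in the finite box $\prod_i[0,\epsilon_i]$ intersected with $[M]\setminus M$, together with ``slabs'' of the form $\{\alpha : \alpha_i > \epsilon_i \text{ for } i\in N,\ \alpha_j = \beta_j \text{ for } j\notin N\}$ for each nonempty $N\subset\{1,\ldots,k\}$ and each $\beta$ in the box intersected with $[M]$. Compatibility and unique minima are then verified directly by inspection (for a slab, whether $\alpha\geq\gamma$ for $\gamma\in M$ depends only on the fixed coordinates $\beta_j$, and the unique minimum is $\alpha_i = \epsilon_i+1$, $\alpha_j = \beta_j$). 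Your argument instead factors into two conceptually cleaner stages: the Boolean atoms $B_N$ give the coarsest compatible partition of $[M]$, and then you refine each $B_N\setminus M$ using a separate, self-contained lemma --- that \emph{any} $A\subset\NN^k$ admits a finite partition into unique-minimum pieces, proved by induction on $|\min A|$ via the antichain identity $\min(A\setminus[m]) = \min A\setminus\{m\}$. Your recognition that the Boolean-atom pre-partition is needed (compatibility would fail if the refinement lemma were applied to $[M]\setminus M$ directly) is exactly right. What each approach buys: the paper's gives a fully explicit description of the final partition, useful if one wants to inspect or compute with it; yours isolates a reusable combinatorial fact and makes the logic more modular, at the cost of the final partition being defined only recursively.
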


\begin{proof}
Define $\epsilon = (\epsilon_1,\ldots,\epsilon_k)$ by $\epsilon_i = \max\{\alpha_i : \alpha\in M\}$ for each $i\in\{1,\ldots,k\}$.  Let the partition of $[M]\setminus M$ consist of all the singletons $\{\alpha\}$ with $\alpha\in \left(\prod_{i=1}^{k}[0,\epsilon_i]\right)\cap[M]\setminus M$ and all sets of the form
\[
\left\{\alpha\in\NN^k : \left(\bigwedge_{i\in N} \alpha_i > \epsilon_i\right)\wedge \left(\bigwedge_{j\in\{1,\ldots,k\}\setminus N} \alpha_j = \beta_j\right)\right\},
\]
for each nonempty $N\subset\{1,\ldots,k\}$ and $\beta = (\beta_1,\ldots,\beta_k)$ in $\left(\prod_{i=1}^{k}[0,\epsilon_i]\right)\cap[M]$
\end{proof}

\begin{lemma}\label{lemma:analSum}
Let $E\subset\RR^m$ be compact, and suppose that $f$ is represented by a convergent power series
\[
f(x,y,z) = \sum_{\alpha\in\NN^k} f_\alpha(x,y)z^\alpha
\]
on $E\times\RR^n\times[0,1]^k$, where $f_\alpha\in\O_E[y]$ for each $\alpha\in\NN^k$.  Then we may write
\begin{equation}\label{eq:analSum}
f(x,y,z) = \sum_{\alpha\in M^{\CR}} z^\alpha f_\alpha(x,y) + \sum_{\beta\in M^{\NC}} z^\beta f_{\beta}(x,y,z)
\end{equation}
on $E\times\RR^n\times[0,1]^k$, where the sets $M^{\CR},M^{\NC}\subset\NN^k$ are finite and disjoint, each $f_\beta$ with $\beta\in M^{\CR}$ is represented by a subseries of $\sum_{\alpha\geq\beta} f_\alpha(x,y) z^{\alpha-\beta}$, and for each $(x,y)\in E\times\RR^n$ and each $\beta\in M^{\NC}$, if $f_\beta(x,y,z) \neq 0$ for some $z\in[0,1]^k$, then $f_\alpha(x,y)\neq 0$ for some $\alpha\in M^{\CR}$ with $\alpha\leq \beta$.
\end{lemma}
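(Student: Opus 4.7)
The plan is to combine Lemma~\ref{lemma:Dickson} with the combinatorial partition of Lemma~\ref{lemma:partSupp}: the parameterized Dickson lemma controls the set of ``lowest visible'' exponents across all fibers $(x,y)$, and the partition lemma lets me repackage the ``upward tail'' as finitely many subseries, each with a unique minimum exponent.

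First I would set
\[
A := \bigcup_{(x,y)\in E\times\RR^n} \min\{\alpha\in\NN^k : f_\alpha(x,y)\neq 0\},
\]
which is finite by Lemma~\ref{lemma:Dickson}, and take $M^{\CR}:=A$. The key preliminary observation is that $f_\alpha(x,y)=0$ for every $\alpha\notin[A]$ and every $(x,y)\in E\times\RR^n$: otherwise $\min\{\gamma : f_\gamma(x,y)\neq 0\}$ would contain some $\gamma\leq\alpha$ lying in $A$, whence $\alpha\in[\gamma]\subset[A]$, a contradiction. Consequently only exponents in $[A]$ actually contribute to the value of the series at any point.

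Next I would apply Lemma~\ref{lemma:partSupp} with $M=A$ to obtain a finite partition $\P$ of $[A]\setminus A$ that is compatible with $\{[\alpha]\}_{\alpha\in A}$ and in which each $P\in\P$ has a unique minimum $\beta_P$. Setting $M^{\NC}:=\{\beta_P : P\in\P\}$, disjointness from $M^{\CR}$ is automatic since $M^{\NC}\subset[A]\setminus A$. For each $\beta=\beta_P\in M^{\NC}$ I would then define
\[
f_\beta(x,y,z) := \sum_{\alpha\in P} f_\alpha(x,y)\,z^{\alpha-\beta_P},
\]
a subseries of $\sum_{\alpha\geq\beta_P} f_\alpha(x,y)\,z^{\alpha-\beta_P}$ that inherits convergence on $E\times\RR^n\times[0,1]^k$ from the absolute convergence of the original series. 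Because the terms indexed by $\NN^k\setminus[A]$ vanish identically, a rearrangement of the series yields exactly \eqref{eq:analSum}.

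To verify the critical condition I would fix $(x,y)\in E\times\RR^n$ and $\beta=\beta_P\in M^{\NC}$ with $f_\beta(x,y,z)\neq 0$ for some $z$. Then $f_\alpha(x,y)\neq 0$ for some $\alpha\in P$, and choosing a minimal $\gamma\leq\alpha$ with $f_\gamma(x,y)\neq 0$ gives $\gamma\in A$. Here the compatibility clause in Lemma~\ref{lemma:partSupp} does the essential work: since $\alpha\in P\cap[\gamma]$, compatibility forces $P\subset[\gamma]$, hence $\beta_P\in[\gamma]$, i.e.\ $\gamma\leq\beta_P$, which is exactly what the critical condition demands. The hard part, as I see it, is precisely this step; it would be easy to produce $\gamma\leq\alpha$ naively, but obtaining $\gamma\leq\beta_P$ genuinely requires the compatibility feature of the partition, which is why Lemma~\ref{lemma:partSupp} is stated with that property built in.
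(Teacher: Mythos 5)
Your proof is correct and follows essentially the same route as the paper's: take $M^{\CR}$ to be the finite set from Lemma~\ref{lemma:Dickson}, apply Lemma~\ref{lemma:partSupp} to $[M^{\CR}]\setminus M^{\CR}$, let $M^{\NC}$ be the set of unique minima of the partition pieces, define each $f_\beta$ as the corresponding subseries, and invoke compatibility to push $\gamma\leq\alpha$ to $\gamma\leq\beta_P$. The one place you are slightly more careful than the paper is in explicitly noting that $f_\alpha(x,y)\equiv 0$ for all $\alpha\notin[M^{\CR}]$, which is what makes the rearrangement into \eqref{eq:analSum} an actual identity rather than merely a partial decomposition; the paper leaves that step implicit.
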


\begin{proof}
Let $M^{\CR}$ be the set defined in \eqref{eq:Dickson}, let $\S$ be the partition of $[M^{\CR}]\setminus M^{\CR}$ given by Lemma \ref{lemma:partSupp}, and let $M^{\NC}$ be the set of minimal members of the sets in $\S$.  For each $\beta\in M^{\NC}$, write $S_\beta$ for the unique member of $\S$ whose minimal member is $\beta$, and define $f_{\beta}(x,y,z) = \sum_{\alpha\in S_\beta} f_\alpha(x,y)z^{\alpha-\beta}$.  Then \eqref{eq:analSum} holds.  Consider $\beta\in M^{\NC}$ and $(x,y)\in E\times\RR^n$ such that $f_{\beta}(x,y,z)\neq 0$ for some $z\in[0,1]^k$.  Then $f_\gamma(x,y)\neq 0$ for some $\gamma\in S_\beta$.  Fix $\alpha\in M^{\CR}$ such that $f_\alpha(x,y)\neq 0$ and $\alpha\leq\gamma$.  Thus $S_\beta\cap[\alpha]$ is nonempty, so $S_\beta\subset[\alpha]$ by the compatibility property of $\S$, and hence $\alpha\leq \beta$.
\end{proof}

\begin{proof}[Proof of Proposition \ref{prop:constrRect}]
For each $f\in\F$ write $f(x,y) = \sum_i f_i(x,y) \prod_j \log f_{i,j}(x,y)$ for finitely many subanalytic functions $f_i:D\to\RR$ and $f_{i,j}:D\to(0,\infty)$.  Apply Proposition \ref{prop:subRect} to $\bigcup_{f\in\F}\{f_i,f_{i,j}\}_{i,j}$, and focus on one set $A$ in the open partition of $D$ over $\RR^m$ that this gives, along with its associated maps $F:B\to A$, $\varphi$ on $A$, and $\psi$ on $B$, where $\psi$ is $l$-rectilinear over $\RR^m$.  Thus $\det\PD{}{F}{y}$ is $\psi$-prepared, and we may write
\[
f\circ F(x,y) = \sum_i a_{i}(x) y^{\alpha_i} u_i(x,y) \prod_j \log a_{i,j}(x) y^{\alpha_{i,j}} u_{i,j}(x,y)
\]
on $B$ for some analytic subanalytic functions $a_i$ and $a_{i,j}$, tuples $\alpha_i$ and $\alpha_{i,j}$ in $\QQ^n$, and $\psi$-units $u_i$ and $u_{i,j}$.  By expanding the logarithms and distributing, we may rewrite this in the form
\begin{equation}\label{eq:constrRect1}
f\circ F(x,y) = \sum_i g_i(x) y^{\alpha_i} (\log y)^{\beta_i} h_i(x,y)
\end{equation}
for some constructible functions $g_i$, tuples $\alpha_i\in\QQ^n$ and $\beta_i\in\NN^n$, and $\psi$-functions $h_i$.  By pulling back by power substitutions in $y$, we may assume that $\alpha_i\in\ZZ^n$ for each $\alpha_i$ in \eqref{eq:constrRect1}.  Write $h_i(x,y) = H_i(\psi_{\leq l}(x,y_{\leq l}), y_{>l})$ for some analytic function $H_i(X,y_{>l})$ on the closure of the image of $\psi$.

We are done if $l = n$, so assume that $l < n$ and work by induction on $n-l$.  Since the closure of the range of $\psi_{\leq l}$ is compact, we may fix $\epsilon > 0$ such that each function $H_i$ is given by a single convergent power series in $y_{>l}$ with analytic coefficients in $(X,y_{\leq l})$, say
\begin{equation}\label{eq:HpowerSeries}
H_i(X,y_{>l}) = \sum_{\gamma\in\NN^{n-l}} H_{i,\gamma}(X) y_{>l}^{\gamma},
\end{equation}
for all $X$ in the closure of the range of $\psi_{\leq l}$ and all $y_{>l}$ in $[0,\epsilon]^{n-l}$.  For each $j\in\{l+1,\ldots,n\}$, by restricting $\psi$ to $\{(x,y)\in B : y_j > \epsilon\}$ and swapping the coordinates $y_{l+1}$ and $y_j$, we may reduce to the case that $\psi$ is $(l+1)$-rectilinear, in which case we are done by our induction on $n-l$.  So it suffices to restrict $\psi$ to $B\cap(\RR^{m+l}\times(0,\epsilon)^{n-l})$.  After pulling back by the maps sending $y_j\mapsto \epsilon y_j$ for each $j\in\{l+1,\ldots,n\}$, and again expanding the logarithms $\log y_j\epsilon = \log y_j + \log \epsilon$ and distributing, we may assume that $\epsilon = 1$.  We are now done pulling back $\psi$.  The pushforward property of the proposition we are proving follows from the fact that $\varphi$ satisfies the pushforward property of Proposition \ref{prop:subRect}, because we have only applied some very simple pullback constructions to the map $\psi$ originally given by Proposition \ref{prop:subRect}.  It remains to show that we can express $f\circ F$ as a sum in the desired form.

By grouping terms in \eqref{eq:constrRect1} according to like powers of $\log y_{>l}$, factoring out suitable monomials in $y$, and absorbing any remaining monomials in $y_{>l}$ with nonnegative powers inside of $\psi$-functions, we may rewrite \eqref{eq:constrRect1} in the form
\begin{equation}\label{eq:constrRect2}
f\circ F(x,y) = \sum_{s\in S} (\log y_{>l})^s y^{\delta_s}
\sum_{j\in J_s} g_j(x) y_{\leq l}^{\alpha_j}(\log y_{\leq l})^{\beta_j} h_j(x,y)
\end{equation}
for some finite $S\subset\NN^{n-l}$ and finite index sets $J_s$, constructible functions $g_j$, tuples $\delta_s\in\ZZ^n$ and $\alpha_j,\beta_j\in\NN^l$, and $\psi$-functions $h_j$, which we still write as $h_j = H_j\circ\psi$ with $H_j$ written as a power series \eqref{eq:HpowerSeries}.  For each $s\in S$ write
\[
G_s\circ\Psi_s(x,y) = \sum_{j\in J_s} g_j(x) y_{\leq l}^{\alpha_j}(\log y_{\leq l})^{\beta_j} h_j(x,y),
\]
where
\begin{eqnarray*}
\Psi_s(x,y)
    & = &
    \left(
    \psi_{\leq l}(x,y_{\leq l}), \log y_{\leq l}, (g_j(x))_{j\in J_s}, y
    \right),
    \\
G_s(X,Y,Z_s,y)
    & = &
    \sum_{j\in J_s} Z_j y_{\leq l}^{\alpha_j} Y^{\beta_j} H_j(X,y_{>l}),
\end{eqnarray*}
with $Z_s = (Z_j)_{j\in J_s}$ and $Y = (Y_1,\ldots,Y_l)$.  By computing
\begin{equation}\label{eq:reverseSum}
\sum_{j\in J_s}\left( Z_j y_{\leq l}^{\alpha_j} Y^{\beta_j} \sum_{\gamma\in\NN^{n-l}} H_{j,\gamma}(X)y_{>l}^{\gamma} \right)
=
\sum_{\gamma\in\NN^{n-l}}\left(\sum_{j\in J_s} Z_j y_{\leq l}^{\alpha_j} Y^{\beta_j} H_{j,\gamma}(X)\right) y_{>l}^{\gamma},
\end{equation}
we may write
\[
G_s(X,Y,Z_s,y_{>l}) = \sum_{\gamma\in\NN^{n-l}} G_{s,\gamma}(X,Y,Z_s,y_{\leq l}) y_{>l}^{\gamma}
\]
with each
\[
G_{s,\gamma}(X,Y,Z_s,y_{\leq l}) = \sum_{j\in J_s} Z_j y_{\leq l}^{\alpha_j} Y^{\beta_j} H_{j,\gamma}(X).
\]

Note that each $G_{s,\gamma}$ is a polynomial in $(Y,Z_s,y_{\leq l})$ with analytic coefficients in $X$, and $X$ ranges over a compact set.   So we may apply Lemma \ref{lemma:analSum} to get
\[
G_s(X,Y,Z_s,y)
=
\sum_{\gamma\in R^{\CR}_{s}} y_{>l}^{\gamma} G_{s,\gamma}(X,Y,Z_s,y_{\leq l})
+
\sum_{\gamma\in R^{\NC}_{s}} y_{>l}^{\gamma} G_{s,\gamma}^{\NC}(X,Y,Z_s,y),
\]
where $R^{\CR}_{s}$ and $R^{\NC}_{s}$ are disjoint subsets of $\NN^{n-l}$, each $G_{s,\gamma}^{\NC}$ is an analytic function represented by a subseries of $\sum_{\delta\geq \gamma} y_{>l}^{\delta-\gamma} G_{s,\delta}(X,Y,Z_s,y_{\leq l})$, and for each choice of $(X,Y,Z_s,y_{\leq l})$ and $\gamma'\in R^{\NC}_{s}$, if $G^{\NC}_{s,\gamma'}(X,Y,Z_s,y_{\leq l}, y_{>l}) \neq 0$ for some $y_{>l}\in[0,1]^{n-l}$, then there exists $\gamma\in R^{\CR}_{s}$ such that $G_{s,\gamma}(X,Y,Z_s,y_{\leq l})\neq 0$ and $\gamma\leq \gamma'$.  Write
\begin{equation}\label{eq:constrRect3}
f\circ F(x,y) = \sum_{s\in S}(\log y_{>l})^s y^{\delta_s}
\left(
\sum_{\gamma\in R^{\CR}_{s}} y_{>l}^{\gamma} G_{s,\gamma}\circ\Psi_{s,\leq l}(x,y_{\leq l})
+
\sum_{\gamma\in R^{\NC}_{s}} y_{>l}^{\gamma} G_{s,\gamma}^{\NC}\circ\Psi_s(x,y)
\right),
\end{equation}
where $\Psi_{s,\leq l}$ is the map obtained from $\Psi_s$ by omitting its components $y_{>l}$.  By distributing each $y^{\delta_s}$ and expressing each function $G_{s,\gamma}^{\NC}$ as a sum of terms indexed by $j\in J_s$, via a computation analogous to what was done in \eqref{eq:reverseSum} for $G_s$ (but going from right to left rather than from left to right), we see that \eqref{eq:constrRect3} expresses $f\circ F$ in the desired form.
\end{proof}

\section{Proofs of the Main Theorems}\label{s:mainThmProofs}

This section proves Theorem \ref{thm:LC} in the constructible case, and it states and proves our main preparation theorem for constructible functions, from which Theorem \ref{thm:constrPrepSimple} follows as a special case.  We begin by fixing some notation to describe a situation that will be encountered throughout the section.

\begin{notation}\label{notation:mainThmSetup1}
Consider a finite set $\F$ of constructible functions on a subanalytic set $D\subset\RR^{m+n}$, and let $\A$ be an open partition of $D$ over $\RR^m$ obtained by applying Proposition \ref{prop:constrRect} to $\F$.  Focus on one $A\in\A$, along with its associated maps $F = (F_1,\ldots,F_{m+n}):B\to A$, $\varphi$ on $A$, and $\psi$ on $B$, where $\psi$ is $l$-rectilinear over $\RR^m$, as in the statement of the  proposition.  Write $(x,\tld{y})$ for the coordinates on $A$ with center $\theta$, where $\theta$ is the center of $\varphi$.  Write
\[
\det \PD{}{F}{y}(x,y) = H(x) y^\gamma U(x,y)
\]
on $B$ for some analytic subanalytic function $H$, tuple $\gamma = (\gamma_1,\ldots,\gamma_n)$ in $\QQ^n$, and $\varphi$-unit $U$.  For each $f\in\F$ write equation \eqref{eq:constrRectMainSum} as
\[
f\circ F(x,y) = \sum_{(r,s)\in\Delta(f,A)} \widehat{f}_{r,s}(x,y)
\]
on $B$, where
\begin{eqnarray*}
\Delta^{\CR}(f,A)
    & = &
    \{(r,s) : \text{$s\in S$ and  $r\in R_{s}^{\CR}$}\},
    \\
\Delta^{\NC}(f,A)
    & = &
    \{(r,s) : \text{$s\in S$ and  $r\in R_{s}^{\NC}$}\},
    \\
\Delta(f,A)
    & = &
    \Delta^{\CR}(f,A) \cup \Delta^{\NC}(f,A),
    \\
\widehat{f}_{r,s}(x,y)
    & = &
    \begin{cases}
    y_{>l}^{r}(\log y_{>l})^s f_{r,s}(x,y_{\leq l}),
        & \text{if $(r,s)\in\Delta^{\CR}(f,A)$,} \\
    y_{>l}^{r}(\log y_{>l})^s f_{r,s}(x,y),
        & \text{if $(r,s)\in\Delta^{\NC}(f,A)$,}
    \end{cases}
\end{eqnarray*}
for the sets $S$, $R_{s}^{\CR}$ and $R_{s}^{\NC}$ and the functions $f_{r,s}$ defined from $f$ and $A$ in Proposition \ref{prop:constrRect}.
For each $f\in\F$ and $x\in\Pi_m(A)$, define
\begin{eqnarray*}
\Delta^{\CR}(f,A,x)
    & = &
    \{(r,s)\in\Delta^{\CR}(f,A) : \text{$f_{r,s}(x,y_{\leq l}) \neq 0$ for some $y_{\leq l}\in\Pi_l(B_x)$}\},
    \\
\Delta^{\NC}(f,A,x)
    & = &
    \{(r,s)\in\Delta^{\NC}(f,A) : \text{$f_{r,s}(x,y) \neq 0$ for some $y\in B_x$}\},
    \\
\Delta(f,A,x)
    & = &
    \Delta^{\CR}(f,A,x) \cup \Delta^{\NC}(f,A,x),
    \\
\Omega(f,A,x)
    & = &
    \{y_{\leq l} \in \Pi_l(B_x) : \text{$f_{(r,s)}(x,y_{\leq l}) \neq 0$ for all $(r,s)\in \Delta^{\CR}(f,A,x)$}\}.
\end{eqnarray*}
For each $x\in\Pi_m(A)$ and $i\in\{l+1,\ldots,n\}$, define
\begin{eqnarray*}
\bar{r}_i(f,A,x)
    & = &
    \inf\{r_i : (r,s)\in\Delta^{\CR}(f,A,x)\},
    \\
\bar{s}_i(f,A,x)
    & = &
    \sup\{s_i : \text{$(r,s)\in\Delta^{\CR}(f,x)$ and $r_i = \bar{r}_i(f,A,x)$}\},
\end{eqnarray*}
under the convention that $\bar{r}_i(f,A,x) = \infty$ and $\bar{s}_i(f,A,x) = 0$ when $\Delta^{\CR}(f,A,x)$ is empty.
\end{notation}

\begin{remarks}\label{rmk:mainThmSetup1}
Consider the situation described in Notation \ref{notation:mainThmSetup1}, and let $f\in\F$.
\begin{enumerate}{\setlength{\itemsep}{3pt}
\item
For each $x\in\Pi_m(A)$, the set $\Omega(f,A,x)$ is dense and open in $\Pi_l(B_x)$.

\begin{proof}
This follows from the fact that for each $x\in\Pi_m(A)$ and $(r,s)\in\Delta^{\CR}(f,A,x)$, $f_{r,s}(x,\cdot)$ is a nonzero analytic function on $\Pi_l(B_x)$, and $\Pi_l(B_x)$ is connected and open in $\RR^l$.
\end{proof}

\item
For each $x\in\Pi_m(A)$, the set $\Delta^{\CR}(f,A,x)$ is empty if and only if $f(x,y) = 0$ for all $y\in A_x$.

\begin{proof}
If $\Delta^{\CR}(f,A,x)$ is empty, then \eqref{eq:CR} implies that $f(x,\cdot)$ is identically zero on $A_x$.  If $\Delta^{\CR}(f,A,x)$ is nonempty, then the following lemma implies that $f(x,\cdot)$ is not identically zero on $A_x$.
\end{proof}
}\end{enumerate}
\end{remarks}

\begin{lemma}\label{lemma:asympEst}
Consider the situation described in Notation \ref{notation:mainThmSetup1}.  Fix $f\in\F$, $i\in\{l+1,\ldots,n\}$, $x\in\Pi_m(A)$ with $\Delta^{\CR}(f,A,x)\neq \emptyset$, and $y_{\leq l}\in \Omega(f,A,x)$.  For any tuple $y_{>l} = (y_{l+1},\ldots,y_n)$, write $y' = (y_j)_{j\in\{l+1,\ldots,n\}\setminus\{i\}}$ and $y_{>l} = (y',y_i)$.  Then the limit
\begin{equation}\label{eq:asymEstLimit}
\lim_{y_i\to 0} \frac{f\circ F(x,y)}{y_{i}^{\bar{r}_i(f,A,x)} (\log y_i)^{\bar{s}_i(f,A,x)}}
\end{equation}
exists for all $y'\in(0,1)^{n-l-1}$, and the set
\begin{equation}\label{eq:asympEstSet}
\left\{y'\in(0,1)^{n-l-1} : \text{\eqref{eq:asymEstLimit} is nonzero}\right\}
\end{equation}
is dense and open in $(0,1)^{n-l-1}$.
\end{lemma}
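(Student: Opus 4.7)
The plan is to split $f\circ F(x,y)=\sum_{(r,s)\in\Delta(f,A,x)}\widehat{f}_{r,s}(x,y)$ and analyze the quotient term by term as $y_i\to 0^+$. First I would verify that every $(r,s)\in\Delta(f,A,x)$ satisfies $r_i\geq \bar{r}_i$, with $s_i\leq\bar{s}_i$ when $r_i=\bar{r}_i$. For CR indices this is immediate from the definitions of $\bar{r}_i,\bar{s}_i$; for NC indices one uses property \eqref{eq:CR} to produce a CR witness $(r',s)\leq (r,s)$, whose $i$-th coordinate is squeezed between $\bar{r}_i$ and $r_i$, forcing $r'_i=\bar{r}_i$ and thereby transferring the CR inequalities to $(r,s)$.

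With these inequalities in hand, I would show that every term with $(r_i,s_i)\neq(\bar{r}_i,\bar{s}_i)$ contributes $0$ to the limit, because the factor $y_i^{r_i-\bar{r}_i}(\log y_i)^{s_i-\bar{s}_i}$ tends to $0$ (a positive power of $y_i$ dominates any power of $\log y_i$; a strictly negative power of $\log y_i$ alone tends to $0$), while the residual factor $f_{r,s}$ remains bounded as $y_i\to 0$, since as a $\psi$-function it extends real-analytically across $y_i=0$ (the $y_i$-coordinate appears trivially in $\psi$ because $\psi$ is $l$-rectilinear). The surviving terms $(r_i,s_i)=(\bar{r}_i,\bar{s}_i)$ assemble into
\[
L(y')=\sum_{(r,s)\in T^{\CR}}y'^{r_{\neq i}}(\log y')^{s_{\neq i}}f_{r,s}(x,y_{\leq l})+\sum_{(r,s)\in T^{\NC}}y'^{r_{\neq i}}(\log y')^{s_{\neq i}}f_{r,s}(x,y_{\leq l},y',0),
\]
where $T^{\CR},T^{\NC}$ are the subsets of $\Delta^{\CR}(f,A,x),\Delta^{\NC}(f,A,x)$ cut out by $r_i=\bar{r}_i$, $s_i=\bar{s}_i$, and $r_{\neq i},s_{\neq i}$ denote the tuples obtained by omitting the $i$-th entry. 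Because $L$ is real analytic on the open connected set $(0,1)^{n-l-1}$, it now suffices to show $L\not\equiv 0$: its zero set will automatically be nowhere dense.

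The main obstacle is that in principle the NC sum in $L(y')$ could cancel the CR sum, since each $f_{r,s}(x,y_{\leq l},y',0)$ is an arbitrary analytic function of $y'$ whose Taylor expansion scatters contributions across many monomials. I would overcome this by picking a carefully chosen ``corner'' of $T^{\CR}$: choose $(r^*,s^*)\in T^{\CR}$ with $r^*_{\neq i}$ lexicographically minimal among $\{r_{\neq i}:(r,s)\in T^{\CR}\}$ (such an element exists because $\Delta^{\CR}(f,A,x)\neq\emptyset$ by hypothesis, so $\bar{r}_i,\bar{s}_i$ are attained). Then I would examine the coefficient of $y'^{r^*_{\neq i}}(\log y')^{s^*_{\neq i}}$ in $L$. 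The CR piece contributes exactly $f_{r^*,s^*}(x,y_{\leq l})$, which is nonzero because $y_{\leq l}\in\Omega(f,A,x)$.

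Finally, I would rule out NC cancellation as follows: if $(r,s)\in T^{\NC}$ contributed to this monomial, then $s_{\neq i}=s^*_{\neq i}$ and $r_{\neq i}\leq r^*_{\neq i}$ componentwise (so that $r^*_{\neq i}-r_{\neq i}\in\NN^{n-l-1}$ can arise from the Taylor expansion). Property \eqref{eq:CR} would then furnish $(r',s)\in\Delta^{\CR}(f,A,x)$ with $r'\leq r$, forcing $r'_i=\bar{r}_i$ (squeeze), hence $(r',s)\in T^{\CR}$, and $r'_{\neq i}\leq r_{\neq i}\leq r^*_{\neq i}$ componentwise. The lex-minimality of $r^*_{\neq i}$ then forces $r'_{\neq i}=r^*_{\neq i}$, so $r_{\neq i}=r^*_{\neq i}$ and $(r,s)=(r^*,s^*)$, contradicting $R^{\CR}_{s^*}\cap R^{\NC}_{s^*}=\emptyset$ from Proposition \ref{prop:constrRect}. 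Thus the coefficient is $f_{r^*,s^*}(x,y_{\leq l})\neq 0$, so $L\not\equiv 0$, completing the proof.
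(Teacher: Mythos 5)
Your reduction to the function $L(y')$ matches the paper exactly: the paper derives the same limit and calls it $g(y')$, using the identical decomposition into critical and noncritical terms with $(r_i,s_i)=(\bar r_i,\bar s_i)$, the same boundedness of the noncritical $\psi$-functions, and the same observation that $L$ is real analytic on the connected open set $(0,1)^{n-l-1}$. So up to that point you are reproving the paper's argument.

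Where you diverge is in establishing $L\not\equiv 0$, and this is where there is a genuine gap. You choose a lexicographically minimal $r^*_{\neq i}$ and argue that \emph{the coefficient of $y'^{r^*_{\neq i}}(\log y')^{s^*_{\neq i}}$} in $L$ equals $f_{r^*,s^*}(x,y_{\leq l})\neq 0$, and then conclude $L\not\equiv 0$. The combinatorics of the coefficient identification (componentwise $\leq$ implies lex $\leq$, the squeeze via \eqref{eq:CR}, and the disjointness $R^{\CR}_{s^*}\cap R^{\NC}_{s^*}=\emptyset$) is correct. But the inference ``some coefficient in the formal mixed monomial--logarithm expansion is nonzero $\Rightarrow$ $L\not\equiv 0$'' is asserted without justification, and it is not automatic: after Taylor-expanding the noncritical $\psi$-functions, $L$ is an infinite series $\sum_{\alpha,\beta}c_{\alpha,\beta}\,y'^{\alpha}(\log y')^{\beta}$, and one must show that this representation is faithful (equivalently, that such a convergent expansion vanishing identically forces all $c_{\alpha,\beta}=0$). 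This uniqueness fact is true, but proving it requires exactly the kind of one-parameter asymptotic comparison that the paper carries out: it substitutes $y'_j=t^{\lambda_j}$, uses \eqref{eq:CR} to get a uniform gap $\lambda\cdot\bar r'+c<\lambda\cdot r'$ dominating all noncritical contributions, and reads off the nonvanishing of a polynomial in $\lambda$ from the leading asymptotics as $t\to 0$. Your lex-minimal corner is in fact the special case $\lambda_j\sim N^{-j}$ for large $N$, so the missing step in your proof is not a shortcut around the paper's computation but is equivalent to it; you would need to either carry out that asymptotic argument or prove (or cite) the uniqueness of such mixed power/logarithm expansions.
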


\begin{proof}
Define
\begin{eqnarray*}
\Delta_i(f,A,x)
    & = &
    \{(r,s)\in\Delta(f,A,x) : \text{$r_i = \bar{r}_i(f,x)$ and $s_i = \bar{s}_i(f,x)$}\},
    \\
\Delta^{\CR}_{i}(f,A,x)
    & = &
    \Delta_i(f,A,x) \cap \Delta^{\CR}(f,A),
    \\
\Delta^{\NC}_{i}(f,A,x)
    & = &
    \Delta_i(f,A,x) \cap \Delta^{\NC}(f,A).
\end{eqnarray*}
It follows from \eqref{eq:CR} that for each $(r,s)\in\Delta(f,A,x)$, either $r_i > \bar{r}_i(f,A,x)$, or $r_i = \bar{r}_i(f,A,x)$ and $s_i\leq \bar{s}_i(f,A,x)$.  Therefore the limit \eqref{eq:asymEstLimit} exists and equals $g(y')$, where $g:(0,1)^{n-l-1}\to\RR$ is the analytic function defined by
\[
g(y') = \sum_{(r,s)\in\Delta^{\CR}_{i}(f,A,x)} (y')^{r'} (\log y')^{s'} f_{r,s}(x,y_{\leq l})
\,\,+\!\!
\sum_{(r,s)\in\Delta^{\NC}_{i}(f,A,x)} (y')^{r'} (\log y')^{s'} f_{r,s}(x,y_{\leq l}, y',0).
\]
So to prove that \eqref{eq:asympEstSet} is dense and open in $(0,1)^{n-l-1}$, it suffices to show that $g$ is not identically zero.  To do that we will show that $g\circ\eta$ is not identically zero, where $\eta:\Lambda\times(0,1)\to(0,1)^{n-l-1}$ is defined by
\[
\eta(\lambda,t) = (t^{\lambda_j})_{j\in\{l+1,\ldots,n\}\setminus\{i\}}
\]
for some suitably chosen open set $\Lambda\subset(0,\infty)^{n-l-1}$.

Note that
\begin{eqnarray*}
g\circ\eta(\lambda,t)
    & = &
    \sum_{(r,s)\in\Delta^{\CR}_{i}(f,A,x)} t^{\lambda\cdot r'} \lambda^{s'}(\log t)^{|s'|} f_{r,s}(x,y_{\leq l})
    \\
    & &
    +
    \sum_{(r,s)\in\Delta^{\NC}_{i}(f,A,x)} t^{\lambda\cdot r'} \lambda^{s'}(\log t)^{|s'|} f_{r,s}(x,y_{\leq l},\eta(\lambda,t),0).
\end{eqnarray*}
We may choose $\Lambda$ so that there exist $\bar{r}' \in \{r' : (r,s)\in\Delta^{\CR}_{i}(f,A,x)\}$ and $c > 0$ such that for all $(r,s)\in\Delta^{\CR}_{i}(f,A,x)$ with $r'\neq \bar{r}'$,
\begin{equation}\label{eq:r'}
\lambda\cdot \bar{r}' + c < \lambda\cdot r'
\quad\text{for all $\lambda\in\Lambda$.}
\end{equation}
By \eqref{eq:CR}, for each $(r,s)\in\Delta^{\NC}_{i}(f,A,x)$ there exists $\rho$ such that $(\rho,s)\in\Delta^{\CR}_{i}(f,A,x)$ and $\rho\leq r$ (and necessarily $\rho\neq r$), so $\lambda\cdot \rho' < \lambda\cdot r'$ for all $\lambda\in\Lambda$.  Therefore by shrinking $\Lambda$ and $c$, we can ensure that \eqref{eq:r'} also holds for all $(r,s)\in\Delta^{\NC}_{i}(f,A,x)$.  So by defining
\begin{eqnarray*}
\bar{s}'
    & = &
    \max\{|s'| : \text{$(r,s)\in\Delta^{\CR}_{i}(f,A,x)$ and $r' = \bar{r}'$}\},
    \\
\Delta^{\CR}_{i,\Lambda}(f,A,x)
    & = &
    \{(r,s)\in\Delta^{\CR}_{i}(f,A,x) : \text{$r' = \bar{r}'$ and $|s'| = \bar{s}'$}\},
\end{eqnarray*}
we see that as $t$ tends to $0$, $g\circ\eta(\lambda,t)$ is asymptotic with
\[
t^{\lambda\cdot\bar{r}'} (\log t)^{\bar{s}'}
\left(
\sum_{(r,s)\in\Delta^{\CR}_{i,\Lambda}(f,A,x)} \lambda^{s'} f_{r,s}(x,y_{\leq l})
\right),
\]
which is not identically zero because the sum in parentheses is a nonzero polynomial in $\lambda$.
\end{proof}

To prove the next lemma, we need the following inequality:
\begin{equation}\label{eq:weakTriangle}
\text{$(x_1+\cdots+x_k)^p \leq x_{1}^{p} +\cdots + x_{k}^{p}$ if $x_1,\ldots,x_k\geq 0$ and $0<p\leq 1$.}
\end{equation}
The inequality \eqref{eq:weakTriangle} can be verified when $k=2$ by considering $f(t) = (x_1+t)^p$ and $g(t) = x_{1}^{p} + t^p$, where $x_1\geq 0$ and $0 < p \leq 1$, and then showing that $f(0) = g(0)$ and $f'(t) \leq g'(t)$ for all $t > 0$.  The general case then follows by induction on $k$.

\begin{lemma}\label{lemma:triangle}
Let $\nu$ be a positive measure on a set $Y$, let $\{f_i\}_{i\in I}$ and $\{g_j\}_{j\in J}$ be finite families of real-valued $\nu$-measurable functions on $Y$, and let $p,q>0$.  Put $M = \max\{p,q\}$. Then
\[
\int_Y \left|\sum_{i\in I} f_i\right|^p \left|\sum_{j\in J} g_j\right|^q d\nu
\leq
\begin{cases}
\displaystyle
\sum_{(i,j)\in I\times J} \int_Y |f_i|^p|g_j|^q d\nu,
    & \text{if $M < 1$,}
    \vspace*{5pt}\\
\displaystyle
\left(\sum_{(i,j)\in I\times J} \left(\int_Y |f_i|^p|g_j|^q d\nu\right)^{1/M}\right)^M,
    & \text{if $M\geq 1$.}
\end{cases}
\]
\end{lemma}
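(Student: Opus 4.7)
The plan is to handle the two cases separately, in both cases first reducing to nonnegative summands via $|\sum_i f_i| \le \sum_i |f_i|$, and then applying inequality \eqref{eq:weakTriangle} with a carefully chosen exponent.

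For the case $M < 1$, both $p$ and $q$ are in $(0,1]$, so \eqref{eq:weakTriangle} applies directly with exponents $p$ and $q$ respectively, giving the pointwise bound
\[
\left|\sum_{i\in I} f_i\right|^p \left|\sum_{j\in J} g_j\right|^q \leq \left(\sum_{i\in I} |f_i|^p\right)\left(\sum_{j\in J} |g_j|^q\right) = \sum_{(i,j)\in I\times J} |f_i|^p |g_j|^q.
\]
Integrating term-by-term gives the first bound.

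For the case $M \geq 1$, the trick is to factor the integrand as $(uv)^M$, where $u = |\sum_i f_i|^{p/M}$ and $v = |\sum_j g_j|^{q/M}$. Since $p/M \leq 1$ and $q/M \leq 1$, inequality \eqref{eq:weakTriangle} gives $u \leq \sum_i |f_i|^{p/M}$ and $v \leq \sum_j |g_j|^{q/M}$, so pointwise $uv \leq \sum_{(i,j)} |f_i|^{p/M}|g_j|^{q/M}$. Since $M \geq 1$, applying Minkowski's inequality in $L^M(\nu)$ to this finite sum of nonnegative functions yields
\[
\left(\int_Y (uv)^M\, d\nu\right)^{1/M} \leq \sum_{(i,j)\in I\times J} \left(\int_Y |f_i|^p |g_j|^q\, d\nu\right)^{1/M},
\]
and raising both sides to the $M$-th power gives the second bound, since $(uv)^M$ is exactly the original integrand.

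There is no serious obstacle here; the only real point to get right is the choice of exponent $p/M$ and $q/M$ in the second case, which makes \eqref{eq:weakTriangle} applicable while simultaneously making the outer exponent $M$ at least $1$ so that Minkowski's inequality can be used. The two bounds are then just direct consequences of these pointwise estimates and the standard triangle inequality in $L^M$.
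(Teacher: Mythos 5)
Your proof is correct and essentially matches the paper's: both reduce to nonnegative summands, use \eqref{eq:weakTriangle} to pull the sums outside the powers, and finish with \eqref{eq:weakTriangle} again when $M<1$ or Minkowski's inequality in $L^M$ when $M\geq 1$. The paper phrases the exponent manipulation by assuming WLOG $p\geq q$ and factoring out the power $p$ (so the inner exponent is $q/p$), while you factor out $M$ and use exponents $p/M$ and $q/M$ symmetrically; these are the same computation once $p\geq q$ is assumed.
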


\begin{proof}
By symmetry we may assume that $p\geq q$.  Then
\begin{eqnarray*}
\int_Y \left|\sum_{i\in I} f_i\right|^p \left|\sum_{j\in J} g_j\right|^q d\nu
    & \leq &
    \int_Y \left(\sum_{i\in I} |f_i|\right)^p \left(\sum_{j\in J} |g_j|\right)^q d\nu
    \\
    & = &
    \int_Y \left(\left(\sum_{i\in I}|f_i|\right)\left(\sum_{j\in J}|g_j|\right)^{q/p}\right)^p d\nu
    \\
    & \leq &
    \int_Y \left(\left(\sum_{i\in I}|f_i|\right)\left(\sum_{j\in J}|g_j|^{q/p}\right)\right)^p d\nu
    \quad\text{by \eqref{eq:weakTriangle},}
    \\
    & = &
    \int_Y \left(\sum_{(i,j)\in I\times J} |f_i||g_j|^{q/p}\right)^p d\nu,
    \\
    & \leq &
    \begin{cases}
    \displaystyle
    \sum_{(i,j)\in I\times J} \int_Y |f_i|^p |g_j|^q d\nu,
        & \text{if $p < 1$,}
    \vspace*{5pt}\\
    \displaystyle
    \left(\sum_{(i,j)\in I\times J} \left(\int_Y |f_i|^p |g_j|^q d\nu\right)^{1/p}\right)^p,
        & \text{if $p \geq 1$,}
    \end{cases}
\end{eqnarray*}
with the last inequality following from \eqref{eq:weakTriangle} when $p < 1$ and from the triangle inequality for $L^p(\nu)$ when $p\geq 1$.
\end{proof}

\begin{lemma}\label{lemma:OpenInt}
Consider the situation described in Notation \ref{notation:mainThmSetup1}, and suppose that $f,\mu\in \F$, $q > 0$ and $x\in\Pi_m(A)$.  Then
\[
\LC(f\Restr{A},|\mu|^q\Restr{A},x) \cap (0,\infty)
=
\bigcap_{i=l+1}^{n}\{p\in(0,\infty) : \bar{r}_i(f,A,x)p + \bar{r}_i(\mu,A,x)q + \gamma_i > -1\}.
\]
And, $\infty\in \LC(f\Restr{A},|\mu|^q\Restr{A},x)$ if and only if either $\Delta^{\CR}(\mu,A,x)$ is empty or else for each $i\in\{l+1,\ldots,n\}$, $\bar{r}_i(f,A,x) > 0$ or $\bar{r}_i(f,A,x) = \bar{s}_i(f,A,x) = 0$.
\end{lemma}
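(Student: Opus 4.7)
The plan is to pull the integral back through $F$ and analyze the resulting expression on the rectilinear fiber $B_x=\Pi_l(B_x)\times(0,1)^{n-l}$. Change of variables gives
\[
\int_{A_x}|f|^p|\mu|^q\,dy=\int_{B_x}|f\circ F|^p|\mu\circ F|^q\left|\det\PD{}{F}{y}\right|dy.
\]
Because the closure of $\Pi_l(B_x)$ is compact in $(0,1]^l$, every factor involving $y_{\leq l}$ in the expansions from \eqref{eq:constrRectSubSum} (the monomials $y_{\leq l}^{\alpha_j}(\log y_{\leq l})^{\beta_j}$ and the $\psi_{\leq l}$- or $\psi$-units, which are continuous on the compact closure of the range of $\psi$) is bounded and bounded away from zero. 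The entire analysis therefore reduces to the asymptotic behavior of the integrand in the coordinates $y_i\in(0,1)$ for $i>l$.

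For the finite-$p$ case, the upper bound is obtained by applying Lemma \ref{lemma:triangle} to dominate the integral by a finite combination of integrals $\int|\widehat{f}_{r,s}|^p|\widehat{\mu}_{\rho,\sigma}|^q|\det\PD{}{F}{y}|\,dy$ over $(r,s)\in\Delta(f,A,x)$ and $(\rho,\sigma)\in\Delta(\mu,A,x)$; each such integrand reduces, up to bounded positive factors, to $y_{>l}^{rp+\rho q+\gamma_{>l}}|\log y_{>l}|^{sp+\sigma q}$. The key observation is that every such $r$ satisfies $r_i\geq\bar{r}_i(f,A,x)$ for each $i>l$: this is immediate for critical indices, and for noncritical ones it follows from property \eqref{eq:CR}, which furnishes a critical $(r',s)\in\Delta^{\CR}(f,A,x)$ with $r'\leq r$. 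The same holds for $\mu$, so the hypothesis $\bar{r}_i(f,A,x)p+\bar{r}_i(\mu,A,x)q+\gamma_i>-1$ forces every exponent to exceed $-1$, and Corollary \ref{cor:rectIntBdd}.1 provides termwise integrability. For the reverse direction, if the inequality fails at some $i$ then $\Delta^{\CR}(f,A,x)$ and $\Delta^{\CR}(\mu,A,x)$ must both be nonempty (otherwise the integrand vanishes by Remark \ref{rmk:mainThmSetup1}.2 and the hypothesis is vacuous); Lemma \ref{lemma:asympEst} applied to both $f$ and $\mu$ then shows that along slices with generic fixed $(y_{\leq l},y')$ and $y_i\to 0^+$, the integrand is asymptotic to $c\,y_i^{\bar{r}_i(f,A,x)p+\bar{r}_i(\mu,A,x)q+\gamma_i}|\log y_i|^{\bar{s}_i(f,A,x)p+\bar{s}_i(\mu,A,x)q}$ with $c\neq 0$, and Lemma \ref{lemma:intBdd}.1 gives non-integrability in $y_i$, which Fubini promotes to non-integrability on $B_x$.

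The $p=\infty$ case splits on whether $\Delta^{\CR}(\mu,A,x)$ is empty. If it is, then $\mu(x,\cdot)\equiv 0$ on $A_x$ by Remark \ref{rmk:mainThmSetup1}.2, so $|\mu|_x^q$ is the zero measure and trivially $\infty\in\LC(f\Restr{A},|\mu|^q\Restr{A},x)$. Otherwise $\mu(x,\cdot)$ is a nonzero analytic function on the connected open set $A_x$, hence nonzero off a set of Lebesgue measure zero, so $L^\infty(|\mu|_x^q)=L^\infty(A_x)$ and the question reduces to essential boundedness of $f\circ F(x,\cdot)$ on $B_x$. The argument then mirrors the finite-$p$ case with Lemma \ref{lemma:intBdd}.2 and Corollary \ref{cor:rectIntBdd}.2 replacing their integrability counterparts: Lemma \ref{lemma:asympEst} produces unboundedness whenever some $i$ fails the stated condition, while in the converse direction the same lower bound $r_i\geq\bar{r}_i(f,A,x)$ combined with \eqref{eq:CR} and the definition of $\bar{s}_i(f,A,x)$ forces every $(r,s)\in\Delta(f,A,x)$ to satisfy $r_i>0$ or $r_i=s_i=0$ for each $i>l$, and Lemma \ref{lemma:intBdd}.2 then bounds each summand. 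The main technical obstacle throughout is ensuring that noncritical terms cannot cancel or distort the dominant asymptotics in any $y_i$, which is precisely the role of property \eqref{eq:CR} and the quantitative content of Lemma \ref{lemma:asympEst}.
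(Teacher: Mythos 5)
Your proof is correct and takes essentially the same route as the paper: reduce to the rectilinear coordinates on $B_x$ via change of variables, handle the trivial cases where one of $\Delta^{\CR}(f,A,x)$ or $\Delta^{\CR}(\mu,A,x)$ is empty via Remark \ref{rmk:mainThmSetup1}.2, use Lemma \ref{lemma:triangle} plus Corollary \ref{cor:rectIntBdd} together with the observation that $r_i\geq\bar r_i$ (via \eqref{eq:CR}) for the forward implications, and Fubini together with Lemmas \ref{lemma:intBdd} and \ref{lemma:asympEst} for the converses. One small slip: the $h_j$ appearing in \eqref{eq:constrRectSubSum} are $\psi_{\leq l}$- or $\psi$-functions, not $\psi$-units, so they are bounded but need not be bounded away from zero; this is harmless because the forward direction only needs upper bounds on these factors, and the converse direction relies on Lemma \ref{lemma:asympEst} rather than a naive lower bound.
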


\begin{proof}
Let $x\in\Pi_m(A)$.  The conclusion is clear from Remark \ref{rmk:mainThmSetup1}.2 when either $\Delta^{\CR}(f,A,x)$ or $\Delta^{\CR}(\mu,A,x)$ is empty, for then $\LC(f,|\mu|^q,x) = (0,\infty]$ and either $\bar{r}_i(f,A,x) = \infty$ for all $i\in\{l+1,\ldots,n\}$ (when $\Delta^{\CR}(f,A,x)$ is empty), or $\bar{r}_i(\mu,A,x) = \infty$ for all $i\in\{l+1,\ldots,n\}$ (when $\Delta^{\CR}(\mu,A,x)$ is empty).  So we assume that $\Delta^{\CR}(f,A,x)$ and $\Delta^{\CR}(\mu,A,x)$ are both nonempty.  Let $p\in(0,\infty)$.

Suppose that
\begin{equation}\label{eq:linIneq}
\bar{r}_i(f,A,x)p + \bar{r}_i(\mu,A,x)q + \gamma_i > -1
\end{equation}
for all $i\in\{l+1,\ldots,n\}$.  Then
\[
r_i p + r'_i q + \gamma_i > -1
\]
for all $i\in\{l+1,\ldots,n\}$, $(r,s)\in\Delta(f,A,x)$ and $(r',s')\in\Delta(\mu,A,x)$.  By applying Lemma \ref{lemma:triangle} to the sums $f\circ F = \sum_{(r,s)} \widehat{f}_{r,s}$ and
$\mu\circ F = \sum_{(r,s)} \widehat{\mu}_{r,s}$ using the measure defined from the Jacobian of $F$ in $y$, and then by applying Corollary \ref{cor:rectIntBdd}, we see that $p\in\LC(f\Restr{A},|\mu|^q\Restr{A},x)$.

Conversely, suppose that $p\in\LC(f\Restr{A},|\mu|^q\Restr{A},x)$, and let $i\in\{l+1,\ldots,n\}$.  Fubini's theorem and Remark \ref{rmk:mainThmSetup1}.1 imply that there exist $y_{\leq l}\in \Omega(f,A,x)\cap \Omega(\mu,A,x)$ and $y'$ in the set \eqref{eq:asympEstSet} such that
\[
y_i \mapsto |f\circ F(x,y)|^p |\mu\circ F(x,y)|^q \det\PD{}{F}{y}(x,y)
\]
is integrable on $(0,1)$.  So \eqref{eq:linIneq} holds by Lemmas \ref{lemma:intBdd} and \ref{lemma:asympEst}.

The $L^\infty$ case is similar.  Indeed, suppose that $\bar{r}_i(f,A,x) > 0$ or $\bar{r}_i(f,A,x) = \bar{s}_i(f,A,x) = 0$ for all $i\in\{l+1,\ldots,n\}$.  Then $r_i > 0$ or $r_i = s_i= 0$ for all $i\in\{l+1,\ldots,n\}$ and $(r,s)\in\Delta(f,A,x)$.  So applying Corollary \ref{cor:rectIntBdd} to each term of the sum $f\circ F = \sum_{(r,s)} \widehat{f}_{r,s}$ shows that $f\circ F(x,\cdot)$ is bounded  on $B_x$, and hence $\infty\in\LC(f\Restr{A},|\mu|^q\Restr{A},x)$.

Conversely, suppose that $\infty\in\LC(f\Restr{A},|\mu|^q\Restr{A},x)$.  Then $f\circ F(x,\cdot)$ is bounded on $B_x$.  So for each $i\in\{l+1,\ldots,n\}$ we may choose $y_{\leq l}\in \Omega(f,A,x)$ and $y'$ in the set \eqref{eq:asympEstSet}, and thereby conclude that $\bar{r}_i(f,A,x) > 0$ or $\bar{r}_i(f,A,x) = \bar{s}_i(f,A,x) = 0$ by Lemmas \ref{lemma:intBdd} and \ref{lemma:asympEst}.
\end{proof}

\begin{proof}[Proof of Theorem \ref{thm:LC} in the Constructible Case]
Let $f,\mu\in\C(D)$ for a subanalytic set $D\subset\RR^{m+n}$, fix $q > 0$, and write $E = \Pi_m(D)$.  Apply Proposition \ref{prop:constrRect} to $\F = \{f,\mu\}$, and use Notation \ref{notation:mainThmSetup1}.  We claim that for each $A\in\A$, the set
\[
\I_A := \{\LC(f\Restr{A},|\mu|^q\Restr{A},x) : x\in\Pi_m(A)\}
\]
is a finite set of open subintervals of $(0,\infty]$ with endpoints in $(\Span_{\QQ}\{1,q\}\cap[0,\infty))\cup\{\infty\}$, and that for each $I\in\I_A$ there exists $g_{A,I}\in\C(\Pi_m(A))$ such that
\[
\left\{x\in\Pi_m(A) : I\subset\LC(f\Restr{A},|\mu|^q\Restr{A},x)\right\}
=
\{x\in\Pi_m(A) : g_{A,I}(x) = 0\}.
\]
The claim implies the theorem because for each $x\in E$,
\[
\LC(f,|\mu|^q,x) = \bigcap_{\scriptstyle A\in\A\,\text{s.t.} \atop \scriptstyle x\in\Pi_m(A)}
\LC(f\Restr{A},|\mu|^q\Restr{A},x),
\]
so the claim shows that $\I$ is a finite set of open subintervals of $(0,\infty]$ with endpoints in $(\Span_{\QQ}\{1,q\}\cap[0,\infty))\cup\{\infty\}$, and that for each $I\in\I$,
\begin{eqnarray*}
\{x\in E : I\subset\LC(f,|\mu|^q,x)\}
    & = &
    \{x\in E : \text{$I \subset \LC(f\Restr{A},|\mu|^q\Restr{A},x)$ for all $A\in\A$ with $x\in\Pi_m(A)$}\}
    \\
    & = &
    \left\{x\in E : \sum_{A\in\A}\sum_{\scriptstyle J\in\I_A\,\text{s.t.}\atop \scriptstyle I\subset J} (g'_{A,J}(x))^2 = 0\right\},
\end{eqnarray*}
where each $g'_{A,J}:E\to\RR$ is defined by extending $g_{A,J}$ by $0$ on $E\setminus\Pi_m(A)$.

To prove the claim, focus on one $A\in\A$.  Lemma \ref{lemma:OpenInt} shows that each member of $\I_A$ is an open subinterval of $(0,\infty]$ with endpoints in $(\Span_{\QQ}\{1,q\}\cap[0,\infty))\cup\{\infty\}$, and that $\I_A$ is finite because
\[
\LC(f\Restr{A},|\mu|^q\Restr{A},x) = \LC(f\Restr{A},|\mu|^q\Restr{A},x')
\]
for all $x,x'\in\Pi_m(A)$ such that $\Delta^{\CR}(f,A,x) = \Delta^{\CR}(f,A,x')$ and $\Delta^{\CR}(\mu,A,x) = \Delta^{\CR}(\mu,A,x')$.  Fix $I\in\I_A$.  We may define $g_{A,I} = 0$ if $I$ is empty, so assume that $I$ is nonempty.  Let $a = \inf I$ and $b = \sup I$.  Lemma \ref{lemma:OpenInt} implies that for any $x\in\Pi_m(A)$, when the infimum of $\LC(f\Restr{A},|\mu|^q\Restr{A},x)$ is finite, this infimum is determined by the inequalities \eqref{eq:linIneq} for all $i\in\{l+1,\ldots,n\}$ for which $\bar{r}_i(f,A,x)$ is positive; and similarly, when the supremum of $\LC(f\Restr{A},|\mu|^q\Restr{A},x)$ is finite, this supremum is determined by the inequalities \eqref{eq:linIneq} for all $i\in\{l+1,\ldots,n\}$ for which  $\bar{r}_i(f,A,x)$ is negative.  Therefore $I \subset \LC(f\Restr{A},|\mu|^q\Restr{A},x)$ if and only if each of the following two conditions hold.
\begin{enumerate}{\setlength{\itemsep}{3pt}
\item
If $I\cap(0,\infty)$ is nonempty, then
\[
\text{$f_{r,s}(x,y_{\leq l}) = 0$ and $\mu_{r',s'}(x,y_{\leq l}) = 0$ for all $y_{\leq l}\in\Pi_l(B_x)$,}
\]
for every $(r,s)\in\Delta^{\CR}(f,A)$ and $(r',s')\in\Delta^{\CR}(\mu,A)$ such that for all $i\in\{l+1,\ldots,n\}$,
\[
\begin{cases}
r_i a + r'_i q + \gamma_i < -1,
    & \text{if $r_i > 0$,}
    \\
r'_i q + \gamma_i \leq -1,
    & \text{if $r_i = 0$,}
    \\
r_i b + r'_i q + \gamma_i < -1,
    & \text{if $r_i < 0$,}
\end{cases}
\]
with the understanding that we are allowing computations in the extended real number system since $a$ or $b$ could be $\infty$.

\item
If $\infty\in I$, then at least one of the following two conditions hold.
\begin{enumerate}{\setlength{\itemsep}{3pt}
\item
We have
\[
\text{$\mu_{r',s'}(x,y_{\leq l}) = 0$ for all $y_{\leq l}\in\Pi_l(B_x)$,}
\]
for every $(r',s')\in\Delta^{\CR}(\mu,A)$.

\item
We have
\[
\text{$f_{r,s}(x,y_{\leq l}) = 0$ for all $y_{\leq l}\in\Pi_l(B_x)$,}
\]
for every $(r,s)\in\Delta^{\CR}(f,A)$ such that for all $i\in\{l+1,\ldots,n\}$, either $r_i < 0$, or else $r_i = 0$ and $s_i > 0$.
}\end{enumerate}
}\end{enumerate}
Therefore $g_{A,I}$ can be constructed using Theorem \ref{thm:vanish}.
\end{proof}

We now turn our attention to stating and proving the preparation theorem.

\begin{notation}\label{notation:mainThmSetup2}
When considering the situation described in Notation \ref{notation:mainThmSetup1}, we shall now also write $G = (G_1,\ldots,G_{m+n}):A\to B$ for the inverse of $F$, and for each $j\in\{l+1,\ldots,n\}$ write
\[
G_{m+j}(x,y) = H_{j}(x) |\tld{y}|^{\beta_j} V_j(x,y)
\]
on $A$, where $H_j$ is an analytic subanalytic function, $\beta_j\in\QQ^n$, and $V_j$ is a $\varphi$-unit.
\end{notation}

\begin{lemma}\label{lemma:Tk}
Consider the situation described in Notation \ref{notation:mainThmSetup1} and \ref{notation:mainThmSetup2}.  Let $f\in\F$ and $(r,s)\in\Delta(f,A)$, where $r = (r_{l+1},\ldots,r_n)$ and $s = (s_{l+1},\ldots,s_n)$.  We may express $\widehat{f}_{r,s}\circ G$ in the form
\begin{equation}\label{eq:TkSumLemma}
\widehat{f}_{r,s}\circ G(x,y) = \sum_{k\in K_{r,s}(f,A)} T_k(x,y)
\end{equation}
on $A$, where $K_{r,s}(f,A)$ is a finite index set and for each $k\in K_{r,s}(f,A)$,
\begin{equation}\label{eq:TkFormLemma}
T_k(x,y) = g_k(x)\, G_{>m}(x,y)^{R_k} \left(\prod_{j=1}^{n} \left( \log |\tld{y}|^{\beta_j} \right)^{S_{k,j}} \right) u_k(x,y)
\end{equation}
for some $g_k\in\C(\Pi_m(A))$, tuples $R_k = (R_{k,1},\ldots,R_{k,n})\in\QQ^n$ and $S_k = (S_{k,1},\ldots,S_{k,n})\in\NN^n$ satisfying $R_{k,j} = r_j$ and $S_{k,j} \leq s_j$ for all $j\in\{l+1,\ldots,n\}$, and $\varphi$-units $u_k$.
\end{lemma}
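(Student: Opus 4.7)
The plan is to pull back $\widehat{f}_{r,s}$ along $G$ by direct substitution $y_i \mapsto G_{m+i}(x,y)$ and then re-expand each factor into the shape required by \eqref{eq:TkFormLemma}. First I would note that, since each component of $G$ is $\varphi$-prepared by the pushforward property of Proposition \ref{prop:constrRect}, the formula $G_{m+i}(x,y) = H_i(x)\, |\tld{y}|^{\beta_i}\, V_i(x,y)$ of Notation \ref{notation:mainThmSetup2} extends to every $i \in \{1,\ldots,n\}$; and since $B \subset \RR^m \times (0,1)^n$, each $G_{m+i}$ is strictly positive on $A$, so $\log G_{m+i} = \log|\tld y|^{\beta_i} + \log|H_i| + \log|V_i|$, with $\log|V_i|$ a $\varphi$-function (as $V_i$ is a $\varphi$-unit).

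The factor $y_{>l}^{\,r}$ in $\widehat{f}_{r,s}$ pulls back verbatim to $\prod_{j>l} G_{m+j}(x,y)^{r_j}$, which I do not further expand: it contributes to $G_{>m}^{R_k}$ with $R_{k,j} = r_j$ for $j > l$, exactly the promised constraint on $R_k$. For each $j > l$, I then expand $(\log G_{m+j})^{s_j} = \bigl(\log|\tld y|^{\beta_j} + \log|H_j| + \log|V_j|\bigr)^{s_j}$ multinomially, producing a sum of terms each carrying a factor $(\log|\tld y|^{\beta_j})^{t_j}$ with $0 \le t_j \le s_j$, accompanied by a constructible $x$-factor from the $\log|H_j|$-contributions and a $\varphi$-function from the $\log|V_j|$-contributions. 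The bound $t_j \le s_j$ is precisely the promised constraint $S_{k,j} \le s_j$ for $j > l$.

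The remaining factor $f_{r,s}$ is, by Proposition \ref{prop:constrRect}, a finite sum of monomials of the form $g_j(x)\, y_{\le l}^{\alpha_j}\, (\log y_{\le l})^{\beta_j}\, h_j$, where each $h_j$ is a $\psi$- or $\psi_{\le l}$-function. Pulled back, $y_{\le l}^{\alpha_j}$ becomes $\prod_{i \le l} G_{m+i}^{\alpha_{j,i}}$, contributing unconstrained rational exponents to the coordinates $i \le l$ of $R_k$; each logarithmic factor $(\log y_i)^{\beta_{j,i}}$ with $i \le l$ is expanded by the same multinomial trick and yields powers of $\log|\tld y|^{\beta_i}$ with natural-number exponents $S_{k,i}$ (unconstrained); and $h_j \circ G$ is a $\varphi$-function, because writing $h_j = \tld H \circ \psi$ for some analytic $\tld H$, the pushforward property gives $h_j \circ G = \tld H \circ (\psi \circ G)$, the composition of an analytic function with a $\varphi$-function. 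Distributing and collecting -- constructible $x$-factors into $g_k$, the unexpanded $G_{m+i}$-monomial into $G_{>m}^{R_k}$, powers of $\log|\tld y|^{\beta_j}$ into the log-monomial factor, and all $\varphi$-functions and $\varphi$-units into $u_k$ -- produces the required representation \eqref{eq:TkSumLemma}.

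This is largely bookkeeping. The one substantive input beyond routine substitution and multinomial expansion is the pushforward property of Proposition \ref{prop:constrRect}, which guarantees both that every component of $G$ is $\varphi$-prepared and that $\psi \circ G$ is a $\varphi$-function. The constraints $R_{k,j} = r_j$ and $S_{k,j} \le s_j$ for $j > l$ are then automatic, since $y_{>l}^{\,r}$ and $(\log y_{>l})^s$ are processed independently of $f_{r,s}$, and neither the substitution of $y_{>l}^{\,r}$ nor the expansion of $(\log y_{>l})^s$ can alter these exponents upward.
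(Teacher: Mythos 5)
Your approach matches the paper's almost step for step: pull back along $G$, write $\log G_{m+j}$ as $\log H_j + \log|\tld{y}|^{\beta_j} + \log V_j$, expand multinomially, and collect the pieces. The exponent bookkeeping ($R_{k,j} = r_j$ and $S_{k,j} \le s_j$ for $j > l$) is correct, and invoking the pushforward property to conclude $h_j \circ G$ is a $\varphi$-function is the right move.

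However, there is a genuine gap at the very end. You collect ``all $\varphi$-functions and $\varphi$-units into $u_k$,'' but what you actually have to absorb there are the factors $h_j\circ G$ and the $(\log V_i)^c$ pieces from the multinomial expansion -- these are only $\varphi$-\emph{functions}, not $\varphi$-units. (Recall a $\varphi$-unit is $F\circ\varphi$ with $F$ of constant nonzero sign; $\log V_i$ vanishes wherever $V_i = 1$, so its powers are emphatically not units.) A product of $\varphi$-functions is again just a $\varphi$-function, which can vanish, so the $u_k$ you produce need not be a $\varphi$-unit as the lemma requires. The paper closes exactly this gap by an additional splitting step: for each such $\varphi$-function $u_k$, pick a large constant $c$ with $|u_k| < c$ everywhere, write $u_k = (u_k - c) + c$, and separate each term $T_k$ into two terms, one carrying the $\varphi$-unit $u_k - c$ (now of constant sign) and the other carrying the unit $c$. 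You need this final step, or some equivalent device, to land on genuine $\varphi$-units in \eqref{eq:TkFormLemma}.
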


\begin{proof}
By \eqref{eq:constrRectSubSum} we may write $\widehat{f}_{r,s}(x,y)$ as a finite sum of terms of the form
\begin{equation}\label{eq:term1}
g(x) y^R (\log y)^S h(x,y)
\end{equation}
on $B$, where $g\in\C(\Pi_m(A))$, the tuples $R = (R_1,\ldots,R_n)\in\QQ^n$ and $S = (S_1,\ldots,S_n)\in\NN^n$ satisfy $R_j = r_j$ and $S_j = s_j$ for all $j\in\{l+1,\ldots,n\}$, and $h$ is a $\psi$-function.  Pulling back \eqref{eq:term1} by $G$ gives
\[
g(x) G_{>m}(x,y)^R (\log G_{>m}(x,y))^S h\circ G(x,y)
\]
on $A$.  In the above equation, by writing
\[
\log G_{m+j}(x,y) = \log H_j(x) + \log |\tld{y}|^{\beta_j} + \log V_j(x,y)
\]
for each $j\in\{1,\ldots,n\}$, and then distributing, we obtain the desired form given in \eqref{eq:TkSumLemma} and \eqref{eq:TkFormLemma}, except that each $u_k$ is only a $\varphi$-function, not necessarily a $\varphi$-unit.  But then by writing $u_k = (u_k - c) + c$ for some sufficiently large constant $c$ so that $u_k-c$ and $c$ are both units, and then separating each term in \eqref{eq:TkSumLemma} into two terms, we may further assume that each $u_k$ in \eqref{eq:TkSumLemma} is a $\varphi$-unit.
\end{proof}

\begin{lemma}\label{lemma:TkPullback}
Consider a single term $T_k$ given in \eqref{eq:TkFormLemma}.  We may express $T_k\circ F$ as a finite sum
\begin{equation}\label{eq:tXiSum}
T_k\circ F(x,y) = \sum_{\zeta} g_\zeta(x) y^{R_k} (\log y)^{S_\zeta} h_\zeta(x,y)
\end{equation}
on $B$ for some $g_\zeta\in\C(\Pi_m(A))$, tuples $S_\zeta = (S_{\zeta,1},\ldots,S_{\zeta,n})\in\NN^n$ satisfying $S_{\zeta,j} \leq S_{k,j}$ for each $j\in\{1,\ldots,n\}$, and bounded functions $h_\zeta$.
\end{lemma}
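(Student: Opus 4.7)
The plan is to pull $T_k$ back under $F$ by substituting in the $\varphi$-prepared form of the components of $G = F^{-1}$ recorded in Notation \ref{notation:mainThmSetup2}, using the identity $G \circ F = \id_B$, and then expanding by a trinomial expansion. The key observation is that $G_{m+i}(F(x,y)) = y_i$ yields the clean pullback $G_{>m}(F(x,y))^{R_k} = y^{R_k}$, and, after taking logarithms in the identity $G_{m+j}(x,y) = H_j(x) |\tld{y}|^{\beta_j} V_j(x,y)$, gives
\[
\bigl(\log|\tld{y}|^{\beta_j}\bigr)\circ F(x,y) \;=\; \log y_j \;-\; \log|H_j(x)| \;-\; \log|V_j(F(x,y))|
\]
on $B$ for each $j \in \{1,\ldots,n\}$.

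First I would refine the partition using Corollary \ref{cor:subPrep} so that each analytic subanalytic function $H_j$ has constant sign on $\Pi_m(A)$; the $\varphi$-units $V_j$ and $u_k$ automatically have constant sign. Since $G_{m+j} \in (0,1)$ on $A$ and $|\tld{y}|^{\beta_j} > 0$, the signs of $H_j$ and $V_j$ must agree on $A$, so both $\log|H_j(x)|$ and $\log|V_j(F(x,y))|$ are well defined: the former is a constructible function $a_j(x) \in \C(\Pi_m(A))$, while the latter is a bounded function $w_j(x,y)$ on $B$, because $V_j$, being a unit, takes values in a compact subset of $(0,\infty)$ or $(-\infty,0)$. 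Similarly $u_k \circ F$ is bounded on $B$.

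Substituting these expressions into \eqref{eq:TkFormLemma} gives
\[
T_k\circ F(x,y) \;=\; g_k(x)\, y^{R_k} \prod_{j=1}^{n} \bigl(\log y_j - a_j(x) - w_j(x,y)\bigr)^{S_{k,j}} \cdot (u_k\circ F)(x,y).
\]
Expanding each factor $(\log y_j - a_j(x) - w_j(x,y))^{S_{k,j}}$ by the trinomial theorem and distributing the resulting $n$-fold product yields a finite sum of terms of the form $g_\zeta(x)\,y^{R_k}(\log y)^{S_\zeta} h_\zeta(x,y)$ as in \eqref{eq:tXiSum}, where $S_{\zeta,j}$ is the exponent of $\log y_j$ produced by the expansion, hence a natural number with $S_{\zeta,j}\le S_{k,j}$; where $g_\zeta$ is $g_k$ times a rational constant times a product of powers $a_j(x)^{q_j}$, hence constructible on $\Pi_m(A)$; and where $h_\zeta$ is a product of powers of the bounded functions $w_j$ with the bounded function $u_k\circ F$, hence bounded on $B$.

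The only genuine subtlety is the sign bookkeeping needed to ensure every logarithm is real-valued and to identify $H_j$ and $V_j$ as having the same sign, and this is handled at the outset by the refinement via Corollary \ref{cor:subPrep}. Once that is settled, the lemma follows from the direct trinomial expansion sketched above; in particular no use is made of the pullback/pushforward properties of $\varphi,\psi$ beyond boundedness of units, so nothing in the argument disturbs the later uses of these properties.
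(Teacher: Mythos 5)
Your proof is correct and follows essentially the same route as the paper: substitute the $\varphi$-prepared form of $G_{m+j}$ from Notation \ref{notation:mainThmSetup2}, use $G\circ F = \id_B$ to pull back, split the logarithm into the three pieces $\log y_j - \log|H_j| - \log|V_j\circ F|$, and distribute. The extra sign discussion you add is fine (and in fact shows the refinement via Corollary \ref{cor:subPrep} you propose at the outset is unnecessary, since $H_j$ automatically has constant sign matching that of the $\varphi$-unit $V_j$), but it does not change the argument in any essential way.
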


\begin{proof}
Since
\[
|\tld{y}|^{\beta_j} = \frac{G_{m+j}(x,y)}{H_j(x) V_j(x,y)}
\]
for each $j\in\{1,\ldots,n\}$, it follows from \eqref{eq:TkFormLemma} that
\[
T_k\circ F(x,y)
=
g_k(x) y^{R_k}
\left(
\prod_{j=1}^{n}
\left(
\log\frac{y_j}{H_j(x) V_j\circ F(x,y)}
\right)^{S_{k,j}}
\right)
u_k\circ F(x,y)
\]
on $B$.  In the above equation, write
\[
\log\frac{y_j}{H_j(x) V_j\circ F(x,y)}
=
\log y_j - \log H_j(x) - \log V_j\circ F(x,y)
\]
for each $j\in\{1,\ldots,n\}$, and then distribute.
\end{proof}

\begin{theorem}[Preparation of Constructible Functions - Full Version]
\label{thm:constrPrep}
Let $\Phi$ be a finite subset of $\C(D)\times\C(D)\times(0,\infty)$ for some subanalytic set $D\subset\RR^{m+n}$. For each $(f,\mu,q)\in\Phi$ let
\[
\I(f,\mu,q) = \{\LC(f,|\mu|^q,x) : x\in \Pi_m(D)\},
\]
and let $\F = \{f,\mu : (f,\mu,q)\in\Phi\}$.  Then there exists an open partition $\A$ of $D$ over $\RR^m$ into subanalytic cells over $\RR^m$ such that for each $A\in\A$ there exist a rational monomial map $\varphi$ on $A$ over $\RR^m$ and rational numbers $\beta_{i,j}$, where $i,j\in\{1,\ldots,n\}$, for which we may express each $f\in\F$ in the form
\begin{equation}\label{eq:TkSum}
f(x,y) = \sum_{k\in K(f,A)} T_k(x,y)
\end{equation}
on $A$, where $K(f,A)$ is a finite index set and for each $k\in K(f,A)$,
\begin{equation}\label{eq:TkForm}
T_k(x,y) = g_k(x) \left(\prod_{i=1}^{n} |\tld{y}_i|^{r_{k,i}} \left( \log \prod_{j=1}^{n} |\tld{y}_j|^{\beta_{i,j}}\right)^{s_{k,i}}\right) u_{k}(x,y)
\end{equation}
for some $g_k\in\C(\Pi_m(A))$, rational numbers $r_{k,i}$, natural numbers $s_{k,i}$, and $\varphi$-units $u_k$, where we are writing $(x,\tld{y})$ for the coordinates on $A$ with center $\theta$, with $\theta$ being the center for $\varphi$.  Moreover, for each $f\in\F$ and $A\in\A$ there exists a partition $\P(f,A)$ of $K(f,A)$ described as follows.

For each $A\in\A$, $(f,\mu,q)\in\Phi$, $K\in\P(f,A)$, $\Lambda\in\P(\mu,A)$, and $I\in\I(f,\mu,q)$, at least one of the following two statements holds:
\begin{enumerate}{\setlength{\itemsep}{3pt}
\item
for all $(\kappa,\lambda)\in K\times\Lambda$, we have $\Pi_m(A)\times I \subset \LC(T_\kappa,|T_\lambda|^q,\Pi_m(A))$;

\item
for all $x\in\Pi_m(A)$ such that $I\subset \LC(f,|\mu|^q,x)$, either $\sum_{\kappa\in K} T_\kappa(x,y) = 0$ for all $y\in A_x$ or $\sum_{\lambda\in \Lambda} T_\lambda(x,y)=0$ for all $y\in A_x$;
}\end{enumerate}
and if statement 2 does not hold, then
\begin{equation}\label{eq:T'k}
\Pi_m(A)\times(I\setminus\{\infty\}) \subset \LC(T'_\kappa, |T'_\lambda|^q, \Pi_m(A))
\end{equation}
for all $(\kappa,\lambda)\in K\times \Lambda$ and all functions $T'_\kappa$ and $T'_\lambda$ of the form
\begin{eqnarray*}
T'_\kappa(x,y)
    & = &
    \prod_{i=1}^{n} |\tld{y}_i|^{r_{\kappa,i}} \left(\log\prod_{j=1}^{n} |\tld{y}_i|^{\beta'_{\kappa,i,j}}\right)^{s'_{\kappa,i}},
\\
T'_\lambda(x,y)
    & = &
    \prod_{i=1}^{n} |\tld{y}_i|^{r_{\lambda,i}} \left(\log\prod_{j=1}^{n} |\tld{y}_i|^{\beta'_{\lambda,i,j}}\right)^{s'_{\lambda,i}},
\end{eqnarray*}
where the $\beta'_{\kappa,i,j}, \beta'_{\lambda,i,j} \in\QQ$ and $s'_{\kappa,i}, s'_{\lambda,i} \in \NN$ are arbitrary and the $r_{\kappa,i},r_{\lambda,i}$ are as in \eqref{eq:TkForm}.
\end{theorem}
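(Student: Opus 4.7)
The plan is to push forward the rectilinear preparation of Proposition~\ref{prop:constrRect} into the original coordinates via $G = F^{-1}$. Set $\F = \{f,\mu : (f,\mu,q)\in\Phi\}$, apply Proposition~\ref{prop:constrRect} to $\F$ to obtain the open partition $\A$ of $D$ into cells over $\RR^m$, and adopt Notations~\ref{notation:mainThmSetup1} and~\ref{notation:mainThmSetup2}. For each $A\in\A$ and each $f\in\F$, Lemma~\ref{lemma:Tk} applied term by term to $f\circ F = \sum_{(r,s)\in\Delta(f,A)}\widehat{f}_{r,s}$ yields
\[
f(x,y) = \sum_{(r,s)\in\Delta(f,A)}\sum_{k\in K_{r,s}(f,A)} T_k(x,y)
\]
on $A$, with each $T_k$ of the form \eqref{eq:TkForm} after expanding $G_{m+j}(x,y) = H_j(x)|\tld{y}|^{\beta_j}V_j(x,y)$ inside \eqref{eq:TkFormLemma}; the rational numbers $\beta_{i,j}$ demanded by the theorem are precisely the components of the tuples $\beta_j$ from Notation~\ref{notation:mainThmSetup2}. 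I then set $K(f,A) = \bigsqcup_{(r,s)\in\Delta(f,A)}K_{r,s}(f,A)$ and define the partition
\[
\P(f,A) = \{K_{r,s}(f,A) : (r,s)\in\Delta(f,A)\}.
\]

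To verify the main dichotomy, fix $(f,\mu,q)\in\Phi$, blocks $K = K_{r,s}(f,A)$ and $\Lambda = K_{r',s'}(\mu,A)$, and $I\in\I(f,\mu,q)$. By Lemma~\ref{lemma:TkPullback}, every $T_\kappa\circ F$ is a finite sum of terms all carrying the same rectilinear monomial $y^{R_\kappa}$ with $R_{\kappa,j} = r_j$ for $j\in\{l+1,\ldots,n\}$, and similarly $T_\lambda\circ F$ carries a single monomial $y^{R_\lambda}$ with $R_{\lambda,j} = r'_j$ in that range. Changing variables to $B_x$ and combining with the rectilinear form of $|\det(\partial F/\partial y)| = |H(x)|y^\gamma|U(x,y)|$, Corollary~\ref{cor:rectIntBdd} shows that statement~1 is equivalent to the \emph{monomial condition}: $r_jp + r'_jq + \gamma_j > -1$ for all $j\in\{l+1,\ldots,n\}$ and all $p\in I\cap(0,\infty)$, together with the clause of Lemma~\ref{lemma:OpenInt} governing boundedness when $\infty\in I$. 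When the monomial condition holds, statement~1 is immediate, and \eqref{eq:T'k} follows because for $p\in I\setminus\{\infty\}$ the finite-$p$ integrability of $|T'_\kappa\circ F|^p|T'_\lambda\circ F|^q$ against $|\det(\partial F/\partial y)|$ is controlled, via Corollary~\ref{cor:rectIntBdd}, by exactly the same inequalities and is insensitive to the logarithmic data $s'_{\kappa,i}$, $s'_{\lambda,i}$, $\beta'_{\kappa,i,j}$, $\beta'_{\lambda,i,j}$.

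When the monomial condition fails, pick $j_0\in\{l+1,\ldots,n\}$ and $p\in I$ witnessing $r_{j_0}p + r'_{j_0}q + \gamma_{j_0}\leq -1$. For any $x\in\Pi_m(A)$ with $I\subset\LC(f,|\mu|^q,x)$, Lemma~\ref{lemma:OpenInt} gives $\bar{r}_{j_0}(f,A,x)p + \bar{r}_{j_0}(\mu,A,x)q + \gamma_{j_0} > -1$, whence
\[
(\bar{r}_{j_0}(f,A,x) - r_{j_0})p + (\bar{r}_{j_0}(\mu,A,x) - r'_{j_0})q > 0.
\]
If neither $f_{r,s}(x,\cdot)$ nor $\mu_{r',s'}(x,\cdot)$ vanishes identically, a case split on the critical/noncritical status of $(r,s)$ and $(r',s')$ produces elements of $\Delta^{\CR}(f,A,x)$ and $\Delta^{\CR}(\mu,A,x)$ with $j_0$-coordinates at most $r_{j_0}$ and $r'_{j_0}$ respectively---directly when the index is critical, and via the noncritical-ancestor property \eqref{eq:CR} otherwise. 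Both parenthesized differences above are then $\leq 0$, contradicting the displayed strict inequality. Hence $f_{r,s}(x,\cdot)\equiv 0$ or $\mu_{r',s'}(x,\cdot)\equiv 0$, and the identity $\widehat{f}_{r,s}\circ G = \sum_{\kappa\in K}T_\kappa$ (and its analogue for $\mu$) makes the corresponding block-sum vanish identically on $A_x$, giving statement~2.

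The main technical obstacle will be the joint noncritical case, in which neither index itself lies in $\Delta^{\CR}(\cdot,A,x)$ and one must invoke \eqref{eq:CR} twice, once for $f$ and once for $\mu$, to supply critical ancestors whose $j_0$-coordinates bound $r_{j_0}$ and $r'_{j_0}$ from above; orchestrating the resulting inequalities so as to force the contradiction is where the argument is tightest. A secondary wrinkle is the $L^\infty$-endpoint $\infty\in I$: the monomial condition is then replaced by the second clause of Lemma~\ref{lemma:OpenInt}, and the contradiction step must simultaneously compare $\bar{r}_{j_0}, \bar{s}_{j_0}$ against $r_{j_0}, s_{j_0}$ rather than relying on a single linear inequality, though the structural pattern of the argument is identical.
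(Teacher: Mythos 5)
Your proposal follows the paper's proof essentially line for line: apply Proposition~\ref{prop:constrRect} to $\F$, use Lemma~\ref{lemma:Tk} to produce the decomposition \eqref{eq:TkSum}--\eqref{eq:TkForm} via the expansion $G_{m+j}=H_j|\tld y|^{\beta_j}V_j$, define $\P(f,A)=\{K_{r,s}(f,A)\}_{(r,s)\in\Delta(f,A)}$, and establish the dichotomy via Lemma~\ref{lemma:TkPullback}, Lemma~\ref{lemma:OpenInt}, Corollary~\ref{cor:rectIntBdd}, and the critical-ancestor property \eqref{eq:CR}. The only presentational difference is that you case-split on a named ``monomial condition'' and argue by contradiction, whereas the paper directly negates statement~2, fixes a witness $x_0$, and derives \eqref{eq:R} and thence statement~1 --- these are contrapositive formulations of the identical inequalities, and the ``joint noncritical'' case you flag as a potential obstacle is in fact handled by applying \eqref{eq:CR} to each of $f$ and $\mu$ independently, exactly as you describe.
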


\begin{proof}
Apply Proposition \ref{prop:constrRect} to $\F$.  Fix $A\in\A$, and use the notation found in Notation \ref{notation:mainThmSetup1} and \ref{notation:mainThmSetup2} and in Lemmas \ref{lemma:Tk} and \ref{lemma:TkPullback}.  Lemma \ref{lemma:Tk} shows that each $f\in\F$ may be written in the form given in \eqref{eq:TkSum} and \eqref{eq:TkForm}, where each $T_k$ is defined as in \eqref{eq:TkFormLemma} and
\[
K(f,A) = \bigcup_{(r,s)\in\Delta(f,A)} K_{r,s}(f,A).
\]
For each $f\in\F$, define
\[
\P(f,A) = \{K_{r,s}(f,A)\}_{(r,s)\in\Delta(f,A)}.
\]

Now also fix $(f,\mu,q)\in\Phi$, $K\in\P(f,A)$, $\Lambda\in\P(\mu,A)$ and $I\in\I(f,\mu,q)$.  Write $K = K_{r,s}(f,A)$ and $\Lambda = K_{r',s'}(\mu,A)$ for some $(r,s)\in\Delta(f,A)$ and $(r',s')\in\Delta(\mu,A)$.  We are done if statement 2 in the last sentence of the theorem holds, so assume otherwise.  Therefore we may fix $x_0\in\Pi_m(A)$ such that $I\subset\LC(f,|\mu|^q,x_0)$, $(r,s)\in\Delta(f,A,x_0)$ and $(r',s')\in\Delta(\mu,A,x_0)$.  Lemma \ref{lemma:OpenInt} gives the following.
\begin{equation}\label{eq:finiteI}
\left\{\text{\parbox{5in}{
For all $p\in I\cap(0,\infty)$ and all $i\in\{l+1,\ldots,n\}$,
\begin{center}
$\bar{r}_i(f,A,x_0)p + \bar{r}_i(\mu,A,x_0)q + \gamma_i > -1$.
\end{center}
}}
\right.
\end{equation}
\begin{equation}\label{eq:inftyI}
\left\{
\text{\parbox{5in}{
If $\infty\in I$, then for all $i\in\{l+1,\ldots,n\}$,
\begin{center}
$\bar{r}_i(f,A,x_0) > 0$ or  $\bar{r}_i(f,A,x_0) = \bar{s}_i(f,A,x_0) = 0$.
\end{center}
}}
\right.
\end{equation}

Let $\kappa\in K$ and $\lambda\in\Lambda$.  Write $T_\kappa$ and $T_\lambda$ as in \eqref{eq:TkFormLemma} with $k = \kappa$ and $k = \lambda$, respectively, and write
\begin{eqnarray}
\label{eq:TkappaSum}
T_\kappa\circ F(x,y)
    & = &
    \sum_{\zeta} g_\zeta(x) y^{R_\kappa} (\log y)^{S_\zeta} h_\zeta(x,y),
    \\
\label{eq:TlambdaSum}
T_\lambda\circ F(x,y)
    & = &
    \sum_{\eta} g_\eta(x) y^{R_\lambda} (\log y)^{S_\eta} h_\eta(x,y),
\end{eqnarray}
as in \eqref{eq:tXiSum}.  Note that for each $i\in\{l+1,\ldots,n\}$,
\begin{equation}\label{eq:R}
\text{$R_{\kappa,i} = r_i \geq \bar{r}_i(f,A,x_0)$ and $R_{\lambda,i} = r'_i \geq \bar{r}_i(\mu,A,x_0)$.}
\end{equation}
So \eqref{eq:finiteI} holds with $R_{\kappa,i}$ and $R_{\lambda,i}$ in place of $\bar{r}_i(f,A,x_0)$ and $\bar{r}_i(\mu,A,x_0)$, respectively.  Therefore by Corollary \ref{cor:rectIntBdd}, Lemma \ref{lemma:triangle},  \eqref{eq:TkappaSum} and \eqref{eq:TlambdaSum}, it follows that
\[
\Pi_m(A) \times (I\setminus\{\infty\}) \subset \LC(T_k,|T_\lambda|^q,\Pi_m(A)).
\]
Note that the proof of this fact depends only the values of $r$ and $r'$, being independent the values of $\beta_1,\ldots,\beta_n$, $s$ and $s'$, so \eqref{eq:T'k} follows.

Now suppose that $\infty\in I$.  Note that for each $\zeta$ and $i\in\{l+1,\ldots,n\}$, we have $S_{\zeta,i} \leq S_{\kappa,i} \leq s_i$.  Combining this with \eqref{eq:R} shows that for each $i\in\{l+1,\ldots,n\}$, either $R_{\kappa,i} > 0$ or else $R_{\kappa,i} = S_{\zeta,i} = 0$ for all $\zeta$.  Therefore Corollary \ref{cor:rectIntBdd} and \eqref{eq:TkappaSum} show that $T_\kappa\circ F(x,\cdot)$ is bounded on $B_x$ for each $x\in\Pi_m(A)$.  So $\Pi_m(A)\times\{\infty\} \subset \LC(T_k,|T_\lambda|^q,\Pi_m(A))$.

This completes the proof of the theorem, except for the fact that $A$ need not be a cell over $\RR^m$.  To remedy this, simply construct an open partition of $A$ over $\RR^m$ consisting of cells over $\RR^m$ (for instance, using Proposition \ref{prop:subPrep}), and then restrict to each of these cells.
\end{proof}

Theorem \ref{thm:constrPrep} was formulated in such a way so as to be as strong and general as possible, but at the cost of having a technical formulation that may obscure the fact that it implies the simpler Theorem \ref{thm:constrPrepSimple}.  The corollary of Theorem \ref{thm:constrPrep} given below directly implies Theorem \ref{thm:constrPrepSimple} and its analog for $p=\infty$ described in \eqref{eq:constrPrepSimpleBdd}, and it generalizes the interpolation theorem \cite[Theorem 2.4]{CluckersMiller2}.

The proof of the corollary makes use of the following observation: for the set $\F$ from Theorem \ref{thm:constrPrep}, if $f\in\F$ is subanalytic, then the restriction of $f$ to $A$ is $\varphi$-prepared (as opposed to being in the more general form allowed by \eqref{eq:TkSum} and \eqref{eq:TkForm}).  This observation follows from the way the proof of Theorem \ref{thm:constrPrep} uses Proposition \ref{prop:constrRect}, and from the way the proof of Proposition \ref{prop:constrRect} uses Proposition \ref{prop:subRect}.


\begin{corollary}\label{cor:constrPrep}
Suppose that $P\subset(0,\infty]$, that $D\subset\RR^{m+n}$ is subanalytic, and that $\Phi$ is a finite set of triples $(f,\mu,q)$ for which $f:D\to\RR$ is constructible, $\mu:D\to\RR$ is subanalytic, and $q > 0$. Define $E = \Pi_m(D)$ and $\F = \{f : (f,\mu,q)\in\Phi\}$.  Then to each $f\in\F$ we may associate a function $f^*\in\C(D)$ in such a way so that the following  statements hold.
\begin{enumerate}{\setlength{\itemsep}{3pt}
\item
There exists an open partition $\A$ of $D$ over $\RR^m$ such that for each $A\in\A$ there exists a rational monomial map $\varphi$ on $A$ over $\RR^m$ such that for every $(f,\mu,q)\in\Phi$, the function $\mu$ is $\varphi$-prepared and we may express $f^*$ as a finite sum
\begin{equation}\label{eq:Tk*}
f^*(x,y) = \sum_k T_k(x,y)
\end{equation}
on $A$, where each function $T_k$ is of the form \eqref{eq:TkForm}.

\item
The following hold for all $(f,\mu,q)\in\Phi$.
\begin{enumerate}{\setlength{\itemsep}{3pt}
\item
We have $f = f^*$ on $\{(x,y)\in D : P\subset\LC(f,|\mu|^q,x)\}$.

\item
For all $A\in\A$ and all terms $T_k$ in the sum \eqref{eq:Tk*}, we have $\Pi_m(A)\times P \subset \LC(T_k, |\mu|^q, \Pi_m(A))$.  (Hence $E\times P \subset \LC(f^*,|\mu|^q,E)$.)
}\end{enumerate}

\item
If $\infty\not\in P$, then we may take each function $T_k$ to be of the simpler form
\begin{equation}\label{eq:Tk*SimplerForm}
T_k(x,y) = g_k(x) \left(\prod_{i=1}^{n} |\tld{y}_{i}|^{r_{k,i}} \left(\log |\tld{y}_i|\right)^{s_{k,i}}\right) u_k(x,y),
\end{equation}
and the fact that $\Pi_m(A)\times P \subset \LC(T_k, |\mu|^q, \Pi_m(A))$ only depends on the values of the $r_{k,i}$, and not the values of the $s_{k,i}$, in the following sense: we have $\Pi_m(A)\times P \subset \LC(T'_k, |\mu|^q, \Pi_m(A))$ for any function $T'_k$ on $A$ of the form
\[
T'_k(x,y) = \prod_{i=1}^{n}|\tld{y}_{i}|^{r_{k,i}} \left(\log|\tld{y}_i|\right)^{s'_{k,i}},
\]
where the $r_{k,j}$ are as in \eqref{eq:Tk*SimplerForm} and the $s'_{k,i}$ are arbitrary natural numbers.
}\end{enumerate}
\end{corollary}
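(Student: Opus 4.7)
My plan is to apply Theorem \ref{thm:constrPrep} to $\Phi$ and then cull from the resulting decomposition of each $f$ those partition classes whose terms fail the uniform $L^p$-integrability demanded by 2(b), showing via the dichotomy of Theorem \ref{thm:constrPrep} that the dropped pieces do not interfere with 2(a). First I would invoke Theorem \ref{thm:constrPrep} to obtain an open partition $\A$ of $D$ over $\RR^m$, the rational monomial maps $\varphi$, and for each $f\in\F$ and $A\in\A$ the decomposition $f = \sum_{K\in\P(f,A)} T_K$ with $T_K = \sum_{k\in K} T_k$. Since each $\mu$ in $\Phi$ is subanalytic, the observation preceding the corollary ensures $\mu$ is $\varphi$-prepared on $A$ and $\P(\mu,A)$ may be taken as a single element. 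I would then refine $\A$ so that on each $A$: every $T_K$ is either identically zero on $A$ or fiberwise nonzero on every $A_x$ (via Theorem \ref{thm:vanish}); every $\mu$ appearing in $\Phi$ is either identically zero on $A$ or nowhere zero on $A$ (via Corollary \ref{cor:subPrep}); and $x\mapsto\LC(f,|\mu|^q,x)$ is constant on $\Pi_m(A)$ for each $(f,\mu,q)\in\Phi$ (via the constructible functions $g_I$ from Theorem \ref{thm:LC}).

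For each $A\in\A$ and $f\in\F$, let $\P^*(f,A) \subset \P(f,A)$ consist of those $K$ such that $\Pi_m(A)\times P \subset \LC(T_\kappa,|\mu|^q,\Pi_m(A))$ for every $\kappa\in K$ and every $(f,\mu,q)\in\Phi$; by Lemma \ref{lemma:OpenInt} this is a purely combinatorial condition on the rational exponent data of the $T_k$ and of the prepared form of $\mu$. Define
\[
f^*(x,y) = \sum_{K\in\P^*(f,A)}\;\sum_{k\in K} T_k(x,y)
\]
on each $A\in\A$, and extend by $f^* = f$ on the fiberwise lower-dimensional complement $D\setminus\bigcup\A$, giving $f^*\in\C(D)$. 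Property 2(b) is then immediate by construction. Part 3 of the corollary follows from the observation that the finite-$p$ integrability condition in Lemma \ref{lemma:OpenInt} involves only the monomial exponents $r_{k,i}$, so the log factors $\log\prod_j|\tld y_j|^{\beta_{i,j}}$ can be absorbed into the $\varphi$-unit, yielding the simpler form \eqref{eq:Tk*SimplerForm}.

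The main obstacle is property 2(a), namely that the discarded difference $\sum_{K\notin\P^*(f,A)} T_K$ vanishes on $A_x$ for every $x$ in the $P$-locus of every triple in $\Phi$. For each $K\notin\P^*(f,A)$, there exists a triple $(f,\mu_0,q_0)\in\Phi$ for which some $T_\kappa$ with $\kappa\in K$ fails the $P$-integrability; by Lemma \ref{lemma:OpenInt} this forces statement 1 at the end of Theorem \ref{thm:constrPrep} to fail for every $I\in\I(f,\mu_0,q_0)$ with $P\subset I$, so statement 2 must apply for each such $I$. The refinement (nowhere-zero $\mu_0$ and dichotomic behavior of $T_K$) then yields $T_K\equiv 0$ on $A_x$ for $x$ in each such $I$-locus, hence on fibers over the $P$-locus of $(f,\mu_0,q_0)$. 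The delicate step will be propagating this vanishing coherently across all remaining triples in $\Phi$ so that the single discarded piece $T_K$ vanishes simultaneously on the $P$-loci of every triple; this will require a careful joint analysis of the dichotomies associated to every pair (triple, $I\supset P$), and possibly a finer partition of $\A$ adapted to the overlapping pattern of $P$-loci.
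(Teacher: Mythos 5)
Your overall plan matches the paper's: apply Theorem~\ref{thm:constrPrep}, refine the open partition, discard the partition classes $K$ whose terms fail the $P$--integrability, and use the dichotomy of Theorem~\ref{thm:constrPrep} to show the discarded pieces vanish on the relevant $P$--locus. However, there are several concrete problems.

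First, two of your proposed refinements would destroy the subanalyticity required of $\A$. You refine so that ``every $T_K$ is either identically zero on $A$ or fiberwise nonzero on every $A_x$'' via Theorem~\ref{thm:vanish}, and so that ``$x\mapsto\LC(f,|\mu|^q,x)$ is constant on $\Pi_m(A)$'' via the functions $g_I$ of Theorem~\ref{thm:LC}. Both of these cut $\Pi_m(A)$ along zero loci of \emph{constructible} functions of $x$ (since $f$ and the $T_K$ are constructible but not subanalytic), and such loci need not be subanalytic. But by Definition~\ref{def:openPart}, an open partition of $D$ over $\RR^m$ must consist of subanalytic sets. The paper only refines so that each (subanalytic) $\mu$ is identically zero on $A$ or else fiberwise nonvanishing on every $A_x$; that is a subanalytic condition, and it is all that is needed, since the dichotomy of Theorem~\ref{thm:constrPrep} already delivers the vanishing of each discarded $T_K$ directly on each $I$--locus. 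Neither of your extra refinements is required for the argument you go on to give.

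Second, you define $\P^*(f,A)$ by requiring the $P$--integrability simultaneously for every triple in $\Phi$ involving $f$, and you then correctly observe that the ``propagation'' of the vanishing of a dropped $T_K$ across the $P$--loci of the \emph{other} triples is unresolved. The paper's proof avoids this by defining $K^*(f,A)$ per triple $(f,\mu,q)$ (as the union of $K\in\P(f,A)$ for which there exists $I\in\I(f,\mu,q)$ with $P\subset I$ and statement~1 of Theorem~\ref{thm:constrPrep} holds) and constructing $f^*$ per triple; then if $K\not\subset K^*(f,A)$ statement~2 applies for every $I\supset P$, which together with the fiberwise nonvanishing of $\mu$ gives $T_K(x,\cdot)\equiv 0$ on $A_x$ for every $x$ in each $I$--locus, hence on the full $P$--locus of that triple. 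The issue you flag is an artifact of insisting on a single $\P^*$ quantified over all triples; follow the paper and work one triple at a time.

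Third, your argument for Part~3 is wrong. You say the factors $\log\prod_j|\tld y_j|^{\beta_{i,j}}$ can be ``absorbed into the $\varphi$-unit,'' but logarithms are unbounded near the relevant boundary and cannot be absorbed into a unit. What the paper actually does is expand
\[
\log\prod_{j=1}^n|\tld y_j|^{\beta_{i,j}} \;=\; \sum_{j=1}^n\beta_{i,j}\log|\tld y_j|,
\]
raise this to the power $s_{k,i}$, and distribute, so that each $T_k$ becomes a finite sum of terms of the simpler form \eqref{eq:Tk*SimplerForm} with the same rational exponents $r_{k,i}$ but various $s_{k,i}$. Condition \eqref{eq:T'k} of Theorem~\ref{thm:constrPrep}, together with $\infty\notin P$, then shows these new terms retain the required $P$--integrability, since for finite $p$ only the $r_{k,i}$ matter.
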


\begin{proof}
Let $\A$ be the open partition of $D$ obtained by applying Theorem \ref{thm:constrPrep} to $\Phi$; we use the notation of the theorem.  Because $\mu$ is subanalytic for every $(f,\mu,q)\in\Phi$, it follows that we may partition the members of $\A$ further in the $x$-variables to assume that for each $A\in\A$ and each $(f,\mu,q)\in\Phi$, either $\mu(x,y) = 0$ for all $(x,y)\in A$, or else for each $x\in\Pi_m(A)$ there exists $y\in A_x$ such that $\mu(x,y) \neq 0$.  Therefore for all $(f,\mu,q)\in\Phi$, $I\in\I(f,\mu,q)$, $A\in\A$ and $K\in\P(f,A)$, at least one of the following two statements holds.
\begin{enumerate}{\setlength{\itemsep}{3pt}
\item
For every $k\in K$ we have $\Pi_m(A)\times I \subset \LC(T_k, |\mu|^q, \Pi_m(A))$.

\item
We have $\sum_{k\in K} T_k(x,y) = 0$ on $\{(x,y)\in A : I\subset\LC(f,|\mu|^q,x)\}$.
}\end{enumerate}
For each $(f,\mu,q)\in\Phi$ and $A\in\A$, define $K^*(f,A)$ to be the union of all $K\in\P(f,A)$ for which there exists $I\in\I(f,\mu,q)$ such that $P\subset I$ and the above statement 1 holds.  For each $(f,\mu,q)\in\Phi$, define $f^*$ by
\[
f^*(x,y)
=
\begin{cases}
\sum_{k\in K^*(f,A)} T_k(x,y),
    & \text{if $(x,y)\in A$ with $A\in\A$,}
    \\
f(x,y),
    & \text{if $(x,y) \in D\setminus\bigcup\A$.}
\end{cases}
\]
Observe that statements 1 and 2 of the corollary hold.

To prove statement 3, suppose that $\infty\not\in P$.  By writing
\[
\log \prod_{j=1}^{n} |\tld{y}_j|^{\beta_{i,j}} = \sum_{j=1}^{n} \beta_{i,j} \log |\tld{y}_j|
\]
in \eqref{eq:TkForm} and then distributing, we may write each term $T_k$ as a finite sum of terms of the form \eqref{eq:Tk*SimplerForm} with the same values of the $r_{k,i}$ but possibly different values of the the $s_{k,i}$.  But only the values of the $r_{k,i}$ are relevant by \eqref{eq:T'k} since $\infty\not\in P$.
\end{proof}

\section{A Counterexample for $L^\infty$ Spaces}\label{s:countEx}

The analog of Theorem \ref{thm:constrPrepSimple} for $p=\infty$ mentioned in the Introduction can be stated as follows: if $D\subset\RR^{m+n}$ is subanalytic and $f\in\C(D)$ is such that $\Int^\infty(f,\Pi_m(D)) = \Pi_m(D)$, then there exists an open partition $\A$ of $D$ over $\RR^m$ into cells over $\RR^m$ such that for every $A\in\A$ we may express $f$ as a finite sum $f(x,y) = \sum_k T_k(x,y)$ on $A$ for terms $T_k$ with $\LC^{\infty}(T_k,\Pi_m(A)) = \Pi_m(A)$ that are of the form
\begin{equation}\label{eq:TkGeneral}
T_k(x,y) = g_k(x)\left(\prod_{i=1}^{n} |\tld{y}_i|^{r_i} \left(\log \prod_{j=1}^{n} |\tld{y}_j|^{\beta_{i,j}} \right)^{s_{k,i}}\right) u_k(x,y),
\end{equation}
as denoted in the previous section.  This statement was proven in Corollary \ref{cor:constrPrep}.  A more literal analog of Theorem \ref{thm:constrPrepSimple} for $p = \infty$ would require the terms $T_k$ to be of the simpler form
\begin{equation}\label{eq:TkSimple}
T_k(x,y) = g_k(x)\left(\prod_{i=1}^{n} |\tld{y}_i|^{r_i} \left(\log|\tld{y}_i|\right)^{s_{k,i}}\right) u_k(x,y);
\end{equation}
however, this more literal analog is false, and the purpose of this section is to prove this by giving a counterexample.  It follows that in Statement 3 of Corollary \ref{cor:constrPrep}, one may not drop the assumption that $\infty\not\in P$; and in Theorem \ref{thm:constrPrep}, one may not replace \eqref{eq:T'k} with the statement $\Pi_m(A)\times I \subset \LC(T'_\kappa, |T'_\lambda|^q, \Pi_m(A))$.

For the rest of the section, write $(x,y) = (x,y_1,y_2)$ for coordinates on $\RR^3$, and define $f:D\to\RR$ by
\begin{equation}\label{eq:f}
f(x,y) = \log\left(\frac{y_1}{y_2}\right),
\end{equation}
where
\begin{equation}\label{eq:D}
D =\{(x,y) \in \RR^3 : 0 < x < 1, 0 < y_1 < 1, xy_1< y_2 < y_1\}.
\end{equation}
Note that the function $f(x,\cdot)$ is bounded on $D_x$ for every $x\in(0,1)$, and that the function $f$ is already a single term of the form given in \eqref{eq:TkGeneral} on $D$.   The obvious way to express $f$ as a sum of terms of the form \eqref{eq:TkSimple} is to write
\[
f(x,y) = \log y_1 - \log y_2
\]
on $D$; however, the terms $\log y_1$ and $\log y_2$ now become unbounded on each fiber $D_x$.  It should therefore seem feasible that $f$ is a counterexample for the more literal analog of Theorem \ref{thm:constrPrepSimple} for $p = \infty$.  To show that this is in fact the case, we prove the following assertion.

\begin{assertion}\label{ass:countEx}
For the function $f:D\to\RR$ defined in \eqref{eq:f} and \eqref{eq:D}, there does \underline{not} exist an open cover $\A$ of $D$ over $\RR$ such that for each $A\in\A$, $f$ may be written as a finite sum of terms $T_k$ of the form \eqref{eq:TkSimple} with each $T_k(x,\cdot)$ bounded on $A_x$ for all $x\in\Pi_m(A)$.
\end{assertion}

The proof of this assertion relies on the following lemma.

\begin{lemma}\label{lemma:countEx}
Let
\[
A = \{(x,z)\in\RR^2 : 0 < x < 1, x < z < 1\},
\]
and define an analytic isomorphism $\eta:(0,1)^2\to A$ by
\[
\eta(x,t) = (x, x^t).
\]
Suppose that $g:A\to\RR$ is a function of the form
\begin{equation}\label{eq:gCountEx}
g(x,z) = \sum_{i\in I} (\log x)^i x^{\alpha_i} z^{\beta_i} g_i(x,z)
\end{equation}
where $I\subset\NN$ is finite and nonempty, the $\alpha_i$ and $\beta_i$ are integers, and each $g_i$ is a function on $A$ that is not identically zero and is of the form
\[
g_i(x,z) = G_i\left(x,z,\frac{x}{z}\right)
\]
for an analytic function $G_i$ on $[0,1]^3$ represented by a single convergent power series, say
\[
G_i(X) = \sum_{\gamma\in\NN^3} G_{i,\gamma} X^\gamma,
\quad\text{for $X\in[0,1]^3$.}
\]
Then there exist $\epsilon\in(0,1]$, a nonzero real number $a$, a natural number $r$, and integers $p$ and $q$ such that for all $t\in(0,\epsilon)$,
\begin{equation}\label{eq:countExLimit}
\lim_{x\to 0} \frac{g\circ\eta(x,t)}{x^{p+qt}(\log x)^r} = a.
\end{equation}
\end{lemma}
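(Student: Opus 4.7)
The plan is to substitute $z = x^t$ and expand each $G_i$ as a convergent trivariate power series, then identify the leading asymptotic term as $x \to 0^+$ via a Newton polygon argument.

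After substitution, $g_i(x, x^t) = G_i(x, x^t, x^{1-t}) = \sum_{\gamma \in \NN^3} G_{i,\gamma} x^{\gamma_1 + t\gamma_2 + (1-t)\gamma_3}$, so
\[
g \circ \eta(x, t) = \sum_{i \in I} \sum_{\gamma \in \NN^3} G_{i,\gamma} (\log x)^i x^{a_{i,\gamma} + t b_{i,\gamma}},
\]
with $a_{i,\gamma} := \alpha_i + \gamma_1 + \gamma_3 \in \ZZ$ and $b_{i,\gamma} := \beta_i + \gamma_2 - \gamma_3 \in \ZZ$. Regrouping by the exponent pair $(a, b)$ yields
\[
g \circ \eta(x, t) = \sum_{(a, b) \in \Lambda} x^{a + tb} P_{a,b}(\log x),
\]
where $P_{a,b}(u) := \sum_{i \in I} c_{i,a,b} u^i$, with $c_{i,a,b}$ the (finite) sum of $G_{i,\gamma}$ over $\gamma$ satisfying $(a_{i,\gamma}, b_{i,\gamma}) = (a, b)$, and $\Lambda := \{(a, b) : P_{a,b} \not\equiv 0\}$, which is nonempty because each $g_i \not\equiv 0$.

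I then pick the \emph{vertex} $(p, q) \in \Lambda$ by setting $p := \min\{a : (a, b) \in \Lambda\}$ (which exists since $a \geq \min_i \alpha_i$) and $q := \min\{b : (p, b) \in \Lambda\}$ (which exists because any contributing $i$ satisfies $b \geq \beta_i - (p - \alpha_i)$). I set $r := \deg P_{p,q}$ and let $C$ denote its leading coefficient, nonzero by definition of $\Lambda$. The next step is to verify that the exponent gap $\delta_t(a, b) := (a - p) + t(b - q)$ is strictly positive for every $(a, b) \in \Lambda \setminus \{(p, q)\}$ provided $t \in (0, \epsilon)$ is small enough: the case $a = p$ forces $b > q$ and gives $\delta_t \geq t$; the case $a > p,\ b \geq q$ gives $\delta_t \geq 1$; the case $a > p,\ b < q$ requires $t < (a-p)/(q-b)$, and the bound $b \geq \alpha_i + \beta_i - a$ for any contributing $i$ forces $(a-p)/(q-b) \to 1$ as $a \to \infty$, so the infimum over such pairs is a positive constant.

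Dividing through and using these facts,
\[
\frac{g \circ \eta(x, t)}{x^{p + qt} (\log x)^r} = \frac{P_{p,q}(\log x)}{(\log x)^r} + \sum_{(a, b) \in \Lambda \setminus \{(p, q)\}} \frac{x^{\delta_t(a, b)} P_{a,b}(\log x)}{(\log x)^r},
\]
the first summand tends to $C$ as $x \to 0^+$, and each remaining term vanishes because $x^{\delta_t(a, b)}$ beats any polynomial growth in $|\log x|$; taking $a := C$ gives the desired limit. The main obstacle is controlling the infinite tail uniformly in $(a, b)$: for this I invoke the Cauchy estimates $|G_{i,\gamma}| \leq M_i R^{-|\gamma|}$ with some $R > 1$ (valid because each $G_i$ is analytic on an open neighborhood of $[0,1]^3$, hence has a polydisk of convergence strictly containing $[0,1]^3$), combined with the lower bound $|\gamma| \geq a - \alpha_i$ whenever $a_{i,\gamma} = a$, to dominate $\sum_{(a,b) \in \Lambda} |P_{a,b}(\log x)| x^{a + tb}$ by a geometric series in $a$. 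Choosing $\epsilon \in (0, 1]$ small enough to guarantee both the strict positivity of $\delta_t$ and the geometric convergence of the tail completes the argument.
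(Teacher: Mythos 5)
Your proof follows the same route as the paper's: substitute $z=x^t$, expand each $G_i$ as a power series, regroup terms by exponent pairs, select the lexicographically minimal pair $(p,q)$, and verify that the exponent gap $\delta_t(a,b)$ stays positive for small $t$. The only presentational difference is that the paper first normalizes $\alpha_i=\beta_i=0$ (absorbing monomials into $g_i$), which lets it write the single inequality $k+lt\geq(p+1)(1-t)>p+qt$ for $t<1/(p+q+1)$, whereas your casework with the lower bound $a+b\geq\min_i(\alpha_i+\beta_i)$ amounts to the same estimate.

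There is, however, one genuine flaw: the claim that each $G_i$, being ``analytic on an open neighborhood of $[0,1]^3$, hence has a polydisk of convergence strictly containing $[0,1]^3$'' is false, so the Cauchy estimate $|G_{i,\gamma}|\le M_iR^{-|\gamma|}$ with $R>1$ is not available. For instance $G_i(X)=\log(1+X_1)$ is analytic on a neighborhood of $[0,1]^3$ and its Taylor series at the origin converges on $[0,1]^3$, yet the radius of convergence in $X_1$ is exactly $1$, not greater. Analyticity on a neighborhood of a real cube does not control the complex polyradius beyond positivity, and even the lemma's hypothesis that the series converge on $[0,1]^3$ does not push the radius past $1$. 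Fortunately the conclusion you need --- that the tail $\sum_{(a,b)\neq(p,q)}x^{\delta_t(a,b)}P_{a,b}(\log x)/(\log x)^r$ vanishes as $x\to0^+$ --- only requires the power series to converge absolutely \emph{near the origin}, which is automatic. Concretely, once $\delta_t(a,b)\geq\delta_0>0$ for all non-vertex $(a,b)$, fix a small $x_0>0$ with $\max(x_0,x_0^t,x_0^{1-t})$ inside the polydisk of convergence; then $\sum_\gamma|G_{i,\gamma}|\,x_0^{\gamma_1+t\gamma_2+(1-t)\gamma_3}<\infty$, and for $0<x<x_0$ the substitution $x^{a+tb}=x_0^{a+tb}(x/x_0)^{a+tb}\le x_0^{a+tb}(x/x_0)^{p+qt+\delta_0}$ bounds the tail by $C\cdot(x/x_0)^{\delta_0}|\log x|^{\max I - r}\to 0$. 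So the structure of your proof is sound; only the justification of the tail estimate needs to be replaced, and in fact the paper simply elides this step.
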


\begin{proof}
By factoring out the lowest powers of $x$ and $z$ in \eqref{eq:gCountEx}, we may assume that the $\alpha_i$ and $\beta_i$ are all natural numbers.  But then each monomial $x^{\alpha_i} z^{\beta_i}$ can be incorporated into the function $g_i$, so we may in fact assume that the numbers $\alpha_i$ and $\beta_i$ are all zero.  For each $i\in I$,
\[
g_i\circ\eta(x,t)
=
G_i(x,x^t,x^{1-t})
=
\sum_{\gamma\in\NN^3} G_{i,\gamma} x^{\gamma_1 + t\gamma_2 + (1-t)\gamma_3}
=
\sum_{k=0}^{\infty}\sum_{l=-k}^{\infty} G_{i}^{[k,l]} x^{k+lt},
\]
where
\[
G_{i}^{[k,l]} = \sum_{\scriptstyle \gamma\in\NN^3\,\text{s.t.} \atop \scriptstyle \gamma_1 + \gamma_3 = k, \gamma_2 - \gamma_3 = l} G_{i,\gamma}.
\]
So
\begin{equation}\label{eq:gEta}
g\circ\eta(x,t)
=
\sum_{i\in I} (\log x)^i g_i\circ\eta(x,t)
=
\sum_{i\in I} \sum_{k=0}^{\infty}\sum_{l=-k}^{\infty} G_{i}^{[k,l]} x^{k+lt}(\log x)^i.
\end{equation}
Note that for each $i\in I$, the function $g_i$ is not identically zero and $\eta$ is a bijection, so $g_i\circ\eta$ is not identically zero, which implies that $G_{i}^{[k,l]}\neq 0$ for some $k$ and $l$.

Let $(p,q)$ be the lexicographically minimum member of the set
\begin{equation}\label{eq:kl}
\bigcup_{i\in I}\{(k,l)\in\NN\times\ZZ : \text{$k+l\geq 0$ and $G_{i}^{[k,l]}\neq 0$}\},
\end{equation}
and define $r = \max\{i\in I : G_{i}^{[p,q]}\neq 0\}$, $a = G_{r}^{[p,q]}$, and $\epsilon = \frac{1}{p+q+1}$.  We claim that for all $(k,l) \neq (p,q)$ in the set \eqref{eq:kl} and all $t\in(0,\epsilon)$,
\begin{equation}\label{eq:klIneq}
k+lt > p+ qt.
\end{equation}
The claim and \eqref{eq:gEta} together imply \eqref{eq:countExLimit}.  To prove the claim,  consider $(k,l) \neq (p,q)$ in \eqref{eq:kl}.  If $k=p$, then $l > q$, in which case \eqref{eq:klIneq} holds for all $t > 0$.   So suppose that $k\geq p+1$.  Simplifying the inequality $(p+1)(1-t) > p + qt$ shows that it is equivalent to the inequality $t < \epsilon$. So for all $t\in(0,\epsilon)$,
\[
k+lt = k(1-t) + (k+l)t \geq (p+1)(1-t) + 0t > p+qt,
\]
which proves the claim.
\end{proof}

In the following proof, we shall say that two functions $g,h:A\to\RR\setminus\{0\}$ are {\bf\emph{equivalent on $A$}} if the range of $g/h$ is contained in a compact subset of $(0,\infty)$.

\begin{proof}[Proof of Assertion \ref{ass:countEx}]
Suppose for a contradiction that there exists an open cover $\A'$ of $D$ over $\RR$ such that for each $A'\in\A'$, $f$ may be written as a finite sum $f(x,y) = \sum_k T_k(x,y)$ on $A'$ for terms $T_k$ of the form \eqref{eq:TkSimple} with each $T_k(x,\cdot)$ bounded on $A'_x$ for all $x\in\Pi_m(A')$; note that we associate to $A'$ a certain rational monomial map $\varphi'$ on $A'$ over $\RR$ that is used to defined the terms $T_k$.  By Proposition \ref{prop:subPrep} there exists an open cover $\A$ of $D$ over $\RR^0$ such that for each $A\in\A$ there exist a unique $A'\in\A'$ containing $A$ and a prepared rational monomial map $\varphi$ on $A$ over $\RR^0$ such that for each function $g_k$ occurring in \eqref{eq:TkSimple}, say of the form
\begin{equation}\label{eq:gkForm}
g_k(x) = \sum_i g_{k,i}(x) \prod_j \log g_{k,i,j}(x)
\end{equation}
for subanalytic functions $g_{k,i}$ and $g_{k,i,j}$,  the functions $g_{k,i}$ and $g_{k,i,j}$ are all $\varphi_{\leq 1}$-prepared on $\Pi_1(A)$.

The functions $xy_1$ and $y_1$ are not equivalent for $x$ near $0$, so we may fix $A\in\A$ of the form
\[
A = \{(x,y) : 0 < x < b_0, 0 < y_1 < b_1(x), a_2(x,y_1) < y_2 < b_2(x,y_1)\}
\]
with $a_2$ and $b_2$ not equivalent on $\Pi_2(A)$.  Let $\varphi$ be the rational monomial map on $A$ over $\RR^0$ associated with $A$.  Note that $x$ is not equivalent on $\Pi_1(A)$ to a constant, that $y_1$ is not equivalent on $\Pi_2(A)$ to a function of $x$, and that $y_2$ is not equivalent on $A$ to a function of $(x,y_1)$, so $\varphi$ must have center $0$.  For the same reason, if $A'$ is the unique member of $\A'$ containing $A$, and if $\varphi'$ is the rational monomial map over $\RR$ associated with $A'$, then $\varphi'$ must also have center $0$.  We are only interested in the restriction of $f$ to $A$, so we may therefore simply assume that $A' = A$ and $\varphi = \varphi'$.  So we may write
\begin{equation}\label{eq:log(y_1/y_2)}
\log\left(\frac{y_1}{y_2}\right)
=
\sum_k g_k(x) y_{1}^{r_{k,1}} y_{2}^{r_{k,2}} (\log y_1)^{s_{k,1}} (\log y_2)^{s_{k,2}} u_k(x,y_1,y_2)
\end{equation}
on $A$ for the constructible functions $g_k$ given in \eqref{eq:gkForm}, rational numbers $r_{k,1}$ and $r_{k,2}$, natural numbers $s_{k,1}$ and $s_{k,2}$, and $\varphi$-units $u_k$; and we may write
\[
a_2(x,y_1) = x^\alpha y_1 u(x,y_1)
\quad\text{and}\quad
b_2(x,y_1) = x^\beta y_1 v(x,y_1)
\]
on $\Pi_2(A)$ for some rational numbers $\alpha$ and $\beta$ satisfying $0 \leq \beta < \alpha\leq 1$ and some $\varphi_{\leq 2}$-units $u$ and $v$.

Fix positive constants $c$ and $d$ satisfying $c > u(x,y_1)$ and $d < v(x,y_1)$ on $\Pi_2(A)$.  Since $\alpha > \beta$, by shrinking $b_0$ we may assume that
\[
A = \{(x,y) : 0 < x < b_0, 0 < y_1 < b_1(x), c x^\alpha y_1 < y_2 < d x^\beta y_1\}.
\]
Pulling back the equation \eqref{eq:log(y_1/y_2)} by the map $(x,y_1,y_2)\mapsto (x,y_1,y_1y_2)$ gives
\begin{equation}\label{eq:log(1/y_2)}
\log\left(\frac{1}{y_2}\right)
=
\sum_k g_k(x) y_{1}^{r_{k,1} + r_{k,2}} y_{2}^{r_{k,2}} (\log y_1)^{s_{k,1}+s_{k,2}}
\left(1 + \frac{\log y_2}{\log y_1}\right)^{s_{k,2}} u_k(x,y_1,y_1y_2)
\end{equation}
on the set
\[
\{(x,y_1,y_2) : 0 < x < b_0, 0 < y_1 < b_1(x), c x^\alpha < y_2 < d x^\beta\}.
\]
By assumption, each term of \eqref{eq:log(1/y_2)} is bounded for each fixed value of $x$, so letting $y_1$ tend to $0$ for each fixed value of $(x,y_2)$ shows that for each $k$, either $r_{k,1} + r_{k,2} > 0$ or $r_{k,1} + r_{k,2} = s_{k,1} + s_{k,2} = 0$ (and $s_{k,1} + s_{k,2} = 0$ means that $s_{k,1} = s_{k,2} = 0$).  So letting $y_1$ tend to $0$ in \eqref{eq:log(1/y_2)}
gives
\begin{equation}\label{eq:log(1/y_2)Limit}
\log\left(\frac{1}{y_2}\right) = \sum_k g_k(x) y_{2}^{r_{k,2}} v_k(x,y_2)
\end{equation}
on
\[
\{(x,y_2) : 0 < x < b_0, cx^\alpha < y_2 < dx^\beta\},
\]
where each $v_k$ is a $\psi$-unit with $\psi$ defined by $\psi(x,y_2) = \lim_{y_1\to 0}\varphi(x,y_1,y_1y_2)$.

By pulling back \eqref{eq:log(1/y_2)Limit} by the map $(x,y_2)\mapsto (x, cx^\beta y_{2}^{\alpha-\beta})$ and expanding logarithms using \eqref{eq:gkForm}, we may write
\begin{equation}
\log y_2 = \sum_i (\log x)^i x^{\alpha_i} y_{2}^{\beta_i} f_i(x,y_2)
\end{equation}
on
\begin{equation}\label{eq:setC}
\{(x,y_2) : 0 < x < b_0, x < y_2 < C\}
\end{equation}
for some $C > 0$, rational numbers $\alpha_i$ and $\beta_i$, and $\psi$-functions $f_i$ (for an appropriately modified $\psi$), where $i$ ranges over some finite set of natural numbers.  By pulling back by $(x,y_2)\mapsto (x^r, y_{2}^{r})$ for a suitable positive integer $r$, we may further assume that all the $\alpha_i$ and $\beta_i$ are integers, and that the components of $\psi(x,y_2)$ are also all monomial in $(x,y_2)$ with integer powers.  Thus each component of $\psi$ is either of the form $x^p$ for some positive integer $p$, is of the form $y_{2}^{q}$ for some positive integer $q$, or is of the form $x^p/y_{2}^{q} = x^{p-q}(x/y_2)^q$ for some positive integers $p$ and $q$ with $p\geq q$.  So we may assume that $\psi(x,y_2) = (x,y_2,x/y_2)$, and therefore write $f_i(x,y_2) = F_i(x,y_2,x/y_2)$ for some analytic function $F_i$ defined on the closure of $\{(x,y_2,x/y_2) : (x,y_2)\in A\}$.  Fix $\delta > 0$ sufficiently small so that
\begin{equation}\label{eq:delta}
\{(x,y_2) : 0 < x < \delta^2, 0 < y_2 < \delta, x/y_2 < \delta\}
\end{equation}
is contained in \eqref{eq:setC} and that $F_i$ is represented by a single convergent power series on $[-\delta^2,\delta^2]\times[-\delta,\delta]\times[-\delta,\delta]$.  Thus restricting to \eqref{eq:delta} and then pulling back by $(x,y_2)\mapsto (\delta^2 x, \delta y_2)$ gives
an equation of the form
\begin{equation}\label{eq:log(y_2)}
\log y_2 = \sum_i (\log x)^i x^{\alpha_i} y_{2}^{\beta_i} F_i\left(x,y_2,\frac{x}{y_2}\right)
\end{equation}
on
\[
\{(x,y_2) : 0 < x < 1, x < y_2 < 1\},
\]
with each $F_i$ represented by a single convergent power series on $[-1,1]^3$ centered at the origin.

Applying Lemma \ref{lemma:countEx} to the right side of \eqref{eq:log(y_2)} shows that there exist $\epsilon\in(0,1]$, a nonzero real number $a$, a natural number $r$, and integers $p$ and $q$ such that for all $t\in(0,\epsilon)$,
\[
\lim_{x\to 0} \frac{t\log x}{x^{p+qt}(\log x)^r} = a.
\]
Considering this limit for any fixed value of $t\in(0,\epsilon)$ shows that $r = 1$ and that $p+qt = 0$, so in fact $p = q = 0$ since $t\in(0,\epsilon)$ is arbitrary.  But then $t = a$ for all $t\in(0,\epsilon)$, which is a contradiction that completes the proof.
\end{proof}

\section*{Acknowledgement}

During the preparation of this paper, the research of the first author
has been partially supported by the Fund for Scientific Research - Flanders (G.0415.10) - Belgium, and the research of the second author has been partially supported by the National Science Foundation (award number 1101248) - U.S.A. 

\bibliographystyle{amsplain}
\bibliography{bibliotex}
\end{document}